\documentclass[10pt,a4paper]{article}
\usepackage{amsthm}
\usepackage{amsmath}
\usepackage{amsfonts}
\usepackage{amssymb}
\usepackage{graphicx}
\usepackage[utf8]{inputenc}
\usepackage{verbatim}
\usepackage{comment}
\usepackage{subcaption}
\usepackage{float}
\usepackage{mathtools}
\usepackage[scaled=0.90]{helvet}
\usepackage{hyperref}
\usepackage{geometry}
\usepackage{pgf,tikz,pgfplots}
\pgfplotsset{compat=1.15}
\usetikzlibrary{calc, arrows, shapes, positioning, patterns, angles, quotes, intersections, through, backgrounds}\usetikzlibrary{decorations.markings}

\makeatletter
\let\save@mathaccent\mathaccent
\newcommand*\if@single[3]{%
  \setbox0\hbox{${\mathaccent"0362{#1}}^H$}%
  \setbox2\hbox{${\mathaccent"0362{\kern0pt#1}}^H$}%
  \ifdim\ht0=\ht2 #3\else #2\fi
  }
\newcommand*\rel@kern[1]{\kern#1\dimexpr\macc@kerna}
\newcommand*\widebar[1]{\@ifnextchar^{{\wide@bar{#1}{0}}}{\wide@bar{#1}{1}}}
\newcommand*\wide@bar[2]{\if@single{#1}{\wide@bar@{#1}{#2}{1}}{\wide@bar@{#1}{#2}{2}}}
\newcommand*\wide@bar@[3]{%
  \begingroup
  \def\mathaccent##1##2{%
    \let\mathaccent\save@mathaccent
    \if#32 \let\macc@nucleus\first@char \fi
    \setbox\z@\hbox{$\macc@style{\macc@nucleus}_{}$}%
    \setbox\tw@\hbox{$\macc@style{\macc@nucleus}{}_{}$}%
    \dimen@\wd\tw@
    \advance\dimen@-\wd\z@
    \divide\dimen@ 3
    \@tempdima\wd\tw@
    \advance\@tempdima-\scriptspace
    \divide\@tempdima 10
    \advance\dimen@-\@tempdima
    \ifdim\dimen@>\z@ \dimen@0pt\fi
    \rel@kern{0.6}\kern-\dimen@
    \if#31
      \overline{\rel@kern{-0.6}\kern\dimen@\macc@nucleus\rel@kern{0.4}\kern\dimen@}%
      \advance\dimen@0.4\dimexpr\macc@kerna
      \let\final@kern#2%
      \ifdim\dimen@<\z@ \let\final@kern1\fi
      \if\final@kern1 \kern-\dimen@\fi
    \else
      \overline{\rel@kern{-0.6}\kern\dimen@#1}%
    \fi
  }%
  \macc@depth\@ne
  \let\math@bgroup\@empty \let\math@egroup\macc@set@skewchar
  \mathsurround\z@ \frozen@everymath{\mathgroup\macc@group\relax}%
  \macc@set@skewchar\relax
  \let\mathaccentV\macc@nested@a
  \if#31
    \macc@nested@a\relax111{#1}%
  \else
    \def\gobble@till@marker##1\endmarker{}%
    \futurelet\first@char\gobble@till@marker#1\endmarker
    \ifcat\noexpand\first@char A\else
      \def\first@char{}%
    \fi
    \macc@nested@a\relax111{\first@char}%
  \fi
  \endgroup
}
\makeatother

\usepackage{capt-of}





\newcommand*{\N}{\mathbb{N}}

\newcommand*{\R}{\mathbb{R}}


\newcommand*{\mI}{\mathcal{I}}

\newcommand{\Tau}{\mathcal{T}}

\newcommand*{\Sph}{\mathbb{S}}


\newcommand*{\td}{\widetilde{d}}
\newcommand*{\rar}{\rightarrow}

\newcommand*{\lpls}{(X, d, \ll, \leq, \tau)}


\newcommand*{\tll}{\mathrel{\widetilde{\ll}}}
\newcommand*{\tleq}{\mathrel{\widetilde{\leq}}}
\newcommand*{\tl}{\mathrel{\widetilde{<}}}

\newcommand*{\ttau}{\widetilde{\tau}}


\newcommand*{\amA}{X_1 \sqcup_A X_2}




\swapnumbers

\newtheorem{thm}{Theorem}[section]
\newtheorem{prop}[thm]{Proposition}
\newtheorem{cor}[thm]{Corollary}
\newtheorem{lem}[thm]{Lemma}
\newtheorem{defin}[thm]{Definition}

\theoremstyle{definition}
\newtheorem{ex}[thm]{Example}

\newtheorem{rem}[thm]{Remark}
\numberwithin{equation}{section}

\theoremstyle{definition} 


\newcommand{\thistheoremnam}{}
\newtheorem*{genericthm*}{\thistheoremnam}
\newenvironment{chapt*}[1]
  {\renewcommand{\thistheoremnam}{#1}%
   \begin{genericthm*}}
  {\end{genericthm*}}

\setlength\parindent{0pt}

\title{Gluing of Lorentzian length spaces and the causal ladder}
\author{Felix Rott\footnote{\href{mailto:felix.rott@univie.ac.at}{felix.rott@univie.ac.at}, Faculty of Mathematics, University of Vienna, Austria.}}
\date{}

\begin{document}

\maketitle

\begin{abstract}
We investigate the compatibility of Lorentzian amalgamation
with various properties
of Lorentzian pre-length spaces. In particular, we give conditions under
which gluing of
Lorentzian length spaces yields again a Lorentzian length space and we
give criteria which
preserve many steps of the causal ladder.
We conclude with some thoughts on the causal properties which seem not
so easily transferable. \\

\emph{Keywords:} Lorentzian length spaces, gluing constructions, quotient spaces, metric geometry, causality theory \\

\emph{MSC2020:} 53C23 (primary), 53C50, 53B30, 51F99, 51K10 (secondary)
\end{abstract}

\section{Introduction}
Lorentzian length spaces are a comparatively new approach to a synthetic
description of Lorentzian geometry, introduced in \cite{KS18}.
This is in spirit very similar to metric geometry and the theory of
length spaces, which was key to developing a metric and synthetic point
of view of Riemannian geometry, without relying on any differential
structure and machinery. \\

With the paper birthing the theory being only a couple of years old,
this theory is, by mathematical standards, still in its infancy.
There is, however, rapid progress being made in various directions.
The advancements in the theory of Lorentzian length spaces can roughly
be sorted into two directions (which are not disjoint, of course): on the one hand, there is the translation of fundamental and almost elementary concepts originally developed for
(metric) length spaces aiming to improve the robustness of Lorentzian
(pre-)length spaces and bring it to a level of sophistication close to
that of the metric counterpart. These range from the introduction of (hyperbolic) angles to the description of 
Ricci curvature bounds, Hausdorff measure and
gluing. Works in this direction include:

\begin{itemize}
\item[(i)] \cite{CM20} introduces optimal transport methods in Lorentzian length spaces, defines timelike
Ricci curvature bounds via suitable entropy conditions and gives applications to general
relativity (synthetic singularity theorems).

\item[(ii)] \cite{KS21} examines the null distance in Lorentzian length spaces (which was first introduced
in \cite{SV16} for spacetimes) and in turn studies Gromov-Hausdorff convergence, establishing
first compatibility results with respect to curvature bounds.

\item[(iii)] \cite{MS21} defines an analogue to Hausdorff measure and dimension on Lorentzian length spaces.

\item[(iv)] \cite{BR22} introduces gluing techniques for Lorentzian pre-length spaces and gives an analogue to the gluing theorem of Reshetnyak for CAT($k$) spaces. 

\item[(v)] \cite{BS22} introduces the concept of (hyperbolic) angles, a notion very
similar to the classical Alexandrov angle in metric geometry, cf. \cite[Definition I.1.12]{BH99},
as well as timelike tangent cones and an exponential map, which mimic
the corresponding notions in a metric (length) space, cf. \cite[Definitions II.3.18 \& II.4.4]{BH99}. Via
hyperbolic angles, the authors also give a formulation of curvature
bounds in terms of angle monotonicity.

\item[(vi)] \cite{BMdOS22} also developed, parallel to the above work \cite{BS22}, hyperbolic angles on Lorentzian length spaces. In addition to a monotonicity
formulation of curvature bounds, the authors also introduce a formulation
relating the size of angles and their comparison angles, similar to
the classical angle condition in metric geometry, cf.\ \cite[Definition 4.1.5]{BBI}.
\end{itemize}

On the other hand, there are attempts to give synthetic versions of
various ideas from classical Lorentzian geometry and causality theory. 
Works in this direction include:
\begin{itemize}
\item[(i)] \cite{GKS19} develops a notion of (in)extendibility for Lorentzian length spaces.

\item[(ii)] \cite{AGKS19} defines an analogue to warped products for Lorentzian length spaces and relates the timelike curvature of the product to the (metric) curvature of the base space.

\item[(iii)] \cite{ACS20} expands the causal ladder for Lorentzian length spaces which was originally introduced in \cite{KS18}.

\item[(iv)] \cite{BGH21} introduces time functions on Lorentzian length spaces and shows their existence to being equivalent to $K$-causality.
\end{itemize}
Simply put, one is interested in ``Lorentzifying'' known synthetic concepts
from metric geometry as well as synthesizing relativistic concepts
from Lorentzian geometry.
This article should foremost be regarded as a follow-up to \cite{BR22}, so
at first glance appears to be placed more into the first category.
However, we are mostly investigating how notions from causality theory
behave with respect to gluing, so in essence it is a mix of the two
(which is also a valid description of many works in the theory of
Lorentzian length spaces). \\

We begin by recalling some basic results and definitions about
Lorentzian pre-length spaces, gluing and the causal ladder. We continue
with establishing under which assumptions the additional properties of a
Lorentzian length space are retained. Then we discuss the inheritance of
most of the steps of the causal ladder. We will usually denote by $X$ the amalgamation of two Lorentzian pre-length spaces $X_1$ and $X_2$ and assume that the identifying map preserves the Lorentzian structure (more details on this below). \\

Finally, we collect the main results of this work in the following two theorems (see below for definitions of the required notions):

\begin{thm}[Causal inheritance]
Let $X_1$ and $X_2$ be two Lorentzian pre-length spaces and $X:= \amA$ their Lorentzian amalgamation. Then we have the following preservation of causality conditions.
\begin{itemize}
\item[(i)] If $X_1$ and $X_2$ are chronological, then so is $X$.

\item[(ii)] If $X_1$ and $X_2$ are causal, then so is $X$.

\item[(iii)] If $X_1$ and $X_2$ are extrinsically non-totally imprisoning, then $X$ is non-totally imprisoning.

\item[(iv)] If $X_1$ and $X_2$ are strongly causal, then so is $X$.

\item[(iv)] If $X_1$ and $X_2$ are strongly causal, non-timelike locally isolating and distinguishing, then $X$ is distinguishing.

\item[(v)] If $X_1$ and $X_2$ are causally path-connected, locally causally closed, d-compatible and globally hyperbolic with $A_1$ and $A_2$ time observing, then $X$ is globally hyperbolic.
\end{itemize}
\end{thm}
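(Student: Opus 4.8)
The plan is to handle all six items by a single guiding principle: reduce each causality condition on $X=\amA$ to the corresponding condition on the pieces by means of the decomposition of causal curves recalled above, according to which every future-directed causal curve in $X$ is a finite concatenation of future-directed causal curves lying alternately in $X_1$ and $X_2$ and switching pieces only at points of $A$. The crossing points of such a curve thus form a finite increasing chain inside $A$, and since the Lorentzian amalgamation identifies the causal structures induced on $A_1$ and $A_2$, I may freely regard these crossing points as a single chain in $A$.

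For the chronological and causal cases (i), (ii), I would argue by contradiction. A timelike (resp.\ nontrivial causal) loop through some $x$ must, since each $X_i$ is chronological (resp.\ causal), leave the piece containing $x$ and therefore meet $A$. Listing the crossing points $a_0,\dots,a_n$ in order along the loop, each sub-segment lies in a single $X_j$ and hence relates its endpoints by $\ll$ (resp.\ $\leq$) within that piece. Using the compatibility of the induced causal structures on $A$, I can reassemble these into a timelike (resp.\ causal) loop lying entirely within one piece, contradicting its chronology (resp.\ causality). For non-total imprisonment (iii), I take an inextendible causal curve imprisoned in a compact $K\subseteq X$; then $K\cap X_1$ and $K\cap X_2$ are compact, and the curve either eventually stays in one $X_i$---contradicting extrinsic non-total imprisonment there---or crosses $A\cap K$ infinitely often, which I would rule out using compactness of $A\cap K$ together with the limit curve machinery.

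For strong causality (iv) I would fix $x\in X$ and a neighborhood $U$ and produce a smaller neighborhood that no causal curve re-enters after leaving. When $x\in X_i\setminus A$ this is essentially inherited from strong causality of $X_i$, since a causal curve can only escape $X_i$ through $A$, which can be placed outside a sufficiently small neighborhood; the genuine work is the case $x\in A$, where neighborhoods meet both pieces and one must combine strong causality at $x$ in $X_1$ and in $X_2$ while controlling the crossings at $A$. For the distinguishing statement I would show that $I^+(x)=I^+(y)$ (and dually for the past) forces $x=y$; the futures in $X$ are built from the futures in both pieces glued along $A$, and the additional hypotheses---strong causality, non-timelike local isolation and distinguishing of the pieces---serve precisely to exclude the pathological coincidences of futures that gluing along $A$ could otherwise create.

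The main obstacle is the global hyperbolicity statement (v). Causality of $X$ follows from (ii), so the heart is compactness of the causal diamonds $J^+(x)\cap J^-(y)$. I would decompose such a diamond into its parts in $X_1$ and $X_2$, meeting along $A\cap\bigl(J^+(x)\cap J^-(y)\bigr)$. The time observing hypothesis on $A_1$ and $A_2$ is what confines the crossing points of causal curves from $x$ to $y$ to a compact subset of $A$, so that the $A$-part of the diamond is compact and the two remaining parts are (finite unions of) causal diamonds in $X_1$ and $X_2$, compact by global hyperbolicity there. Gluing these compact pieces along the compact $A$-part then yields compactness of the full diamond. Making the control of crossing configurations precise---and checking that causal path-connectedness, local causal closedness and $d$-compatibility transfer to $X$, so that the limit curve theorem and closedness of $\leq$ are available on the amalgam---is where the real difficulty lies.
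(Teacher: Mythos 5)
Your organizing principle---that every causal curve in $X$ decomposes as a \emph{finite} concatenation of causal curves in $X_1$ and $X_2$ switching at $A$---is false where you need it and is in any case not the right tool. A causal curve in the amalgamation may cross $A$ countably infinitely often (the paper's treatment of non-total imprisonment explicitly accommodates this), so the ``finite increasing chain of crossing points'' picture breaks down. Worse, items (i) and (ii) are statements about the relations $\tll$ and $\tleq$ on general Lorentzian pre-length spaces, where these relations need not be realized by any curve (no causal path-connectedness is assumed); arguing via timelike or causal \emph{loops} therefore proves nothing about irreflexivity of $\tll$ or antisymmetry of $\tleq$. The correct device is purely relational: by the quotient-relation reformulation, $[x]\tleq[y]$ holds iff $x^i\leq y^i$ in one factor or there is a \emph{single} $[a]\in A$ with $x^i\leq a^i\sim a^j\leq y^j$ (arbitrary chains collapse because $f$ is $\leq$-preserving), and chronology and causality then follow in a few lines from the explicit form of $\ttau$ and transitivity. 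For (iii), your plan to ``rule out'' infinitely many crossings of $A\cap K$ via limit curves cannot work: nothing forbids infinitely many crossings, and the limit curve theorem needs hypotheses absent here. The entire reason the \emph{extrinsic} version is hypothesized is that one replaces the curve by a causal \emph{sequence} in each $K_i$, possibly infinite, and bounds $\sum_i d(x_i,x_{i+1})$ directly---no reconnection into curves, no limit curves. Also, the relevant notion is a uniform bound on $L^d$ of all causal curves in a compact set, not imprisonment of inextendible curves.

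For (iv) you silently switch to the ``no almost closed causal curves'' characterization of strong causality, which in this generality is equivalent to the definition in force (timelike diamonds form a subbasis of the topology) only under causal path-connectedness and absence of $\ll$-isolated points, neither of which is assumed. The case $z\in A$, which you correctly identify as the crux, is resolved in the paper by a construction you do not supply: intersect diamonds obtained from strong causality of each $X_i$ inside $U_i$, use the standing assumption that $A_1,A_2$ are non-timelike locally isolating to find $b_\pm^i\in A_i$ with $b_-^i\ll z^i\ll b_+^i$ inside those intersections, and observe that $\pi(I(b_-^1,b_+^1)\sqcup I(b_-^2,b_+^2))$ \emph{is} the timelike diamond $I_X([b_-],[b_+])$ by the glued representation of futures---choosing the diamond's endpoints on $A$ is the essential trick. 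Your treatments of the distinguishing and global hyperbolicity items are statements of intent rather than arguments (you concede that controlling the crossing configurations ``is where the real difficulty lies''); the actual proofs require the glued past/future representation, the recovering-subsets lemma, and, for compactness of diamonds, a sequential argument in which the time observing hypothesis traps the jump points $a_n^i$ inside a fixed compact diamond $J_i(q_-^i,q_+^i)$ so that convergent subsequences exist and global causal closedness yields the limiting relations. As it stands, the proposal does not establish any of the six items.
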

\begin{thm}[Amalgamation of length spaces]
Let $X_1$ and $X_2$ be two strongly causal and locally compact Lorentzian length spaces. Then $X:= \amA$ is a (strongly localizable) Lorentzian length space.
\end{thm}

\section{Preliminaries}
Here, we briefly collect some essentials regarding the general theory of Lorentzian pre-length spaces as well as Lorentzian amalgamation and the causal ladder. For more details, see \cite{KS18, BR22, ACS20}. As in \cite{BR22}, we set $\inf \emptyset = \infty$ and $\sup \emptyset = 0$ for convenience, as time separation and distance metric take values only in $[0,\infty]$. Moreover, if $\alpha:[a,b] \to X, \beta:[b,c] \to X$ are two curves, we will denote by $\alpha \ast \beta:[a,c] \to X$ their concatenation.

\begin{defin}[Lorentzian pre-length space]
A tuple $\lpls$ is called a Lorentzian pre-length space if the following holds:
\begin{itemize}
\item[(i)] $(X,\ll,\leq)$ is a causal space, i.e., $\leq$ is a reflexive and transitive relation on $X$ and $\ll$ is a transitive relation contained in $\leq$.
\item[(ii)] $\tau: X \times X \to [0,\infty]$ is lower semi-continuous with respect to the metric $d$.
\item[(iii)] If $x \leq y \leq z$, then $\tau(x,z) \geq \tau (x,y) + \tau(y,z)$, and $\tau(x,y) > 0 \iff x \ll y$.
\end{itemize}
\end{defin}
By \cite[Example 2.11]{KS18}, any smooth spacetime is a Lorentzian pre-length space (where the distance metric $d$ is given by a (complete) Riemannian metric). In fact, any continuous and causally plain spacetime is a Lorentzian pre-length space as well, see \cite[Proposition 5.8]{KS18}. \\

We will usually abbreviate a Lorentzian pre-length space $\lpls$ by $X$. 
By $x < y$ we mean $x \leq y$ and $x \neq y$ and we will use common notation for sets described via causality relations, e.g., $J^+(x):=\{y \in X \mid x \leq y\}$ and $I(x,z):= \{y \in X \mid x \ll y \ll z\}=I^+(x) \cap I^-(z)$. \\

In contrast to a metric length space, the correct notion of an ``intrinsic Lorentzian space'' is a bit more complicated.
To get from a Lorentzian pre-length space to a Lorentzian length space, one first needs to introduce the concept of causal and timelike curves.

\begin{defin}[Causal/timelike curves]
Let $\lpls$ be a Lorentzian pre-length space. 
\begin{itemize}
\item[(i)] A locally Lipschitz curve $\gamma:[a,b] \to X$ is called future-directed causal (respectively timelike), if $\gamma(s) \leq \gamma(t)$ (respectively $\gamma(s) \ll \gamma(t)$) for all $s,t \in [a,b], s < t$.
Past-directed curves are defined analogously.
Unless explicitly stated otherwise, we assume all causal curves to be future-directed.
\item[(ii)] The $\tau$-length of a causal curve $\gamma$ is given as
\begin{equation}
L_{\tau}(\gamma):= \inf \{ \sum_{i=0}^n \tau(\gamma(t_i),\gamma(t_{i+1}) \mid 
a=t_0 < t_1 < \ldots < t_n=b, n \in \N \}.
\end{equation}
If $\gamma(a)=x,\gamma(b)=y$ and $L_{\tau}(\gamma)=\tau(x,y)$ we say that $\gamma$ is $\tau$-realizing and we call (the image of) such a curve a geodesic (segment).
\end{itemize}
\end{defin}

The next ingredient is localizability, a concept which is similar to the existence of small convex neighbourhoods in Lorentzian manifolds.

\begin{defin}[Localizability]
\label{def: loc}
Let $X$ be a Lorentzian pre-length space. $X$ is called localizable if every point in X has a neighbourhood $U$ such that the following holds:
\begin{itemize}
\item[(i)] There is a constant $C > 0$ such that $L^d(\gamma) 
\footnote{Recall that the length of a curve $\gamma : [a,b] \to X$ in a metric space is given as $L_d(\gamma):=\sup \{ \sum_{i=0}^{n-1} d(\gamma(t_i), \gamma(t_{i+1})) \mid a=t_0 < t_1 < \ldots < t_n=b, n \in \N \}$.} 
\leq C$ for all causal curves $\gamma$ contained in $U$. 

\item[(ii)] There is a continuous map $\omega=\omega_U: U \times U \to [0,\infty)$ such that $(U,d|_{U \times U}, {\ll}|_{U \times U}, {\leq}|_{U \times U}, \omega)$ is a Lorentzian pre-length space with the following non-triviality condition: $I^{\pm}(p) \cap U \neq \emptyset$ for all $p \in U$.

\item[(iii)] For all $p,q \in U$ with $p \leq q$ there is a causal curve $\gamma$ from $p$ to $q$ contained in $U$ which is maximal in $U$ and satisfies $L_{\tau}(\gamma) = \omega(p,q)$.
\end{itemize}
Note that $\omega$ has necessarily the explicit form
\begin{equation}
\omega(p,q)= \max \{L_{\tau}(\gamma) \mid \gamma \text{ is a causal curve from $p$ to $q$ contained in $U$}\}.
\end{equation}
$U$ is called a localizable neighbourhood. 
If every point has a neighbourhood basis of localizable neighbourhoods, $X$ is called strongly localizable.
\end{defin}

The following definition contains some elementary properties of Lorentzian pre-length spaces, some of which are required for a Lorentzian length space.

\begin{defin}[Further properties of Lorentzian pre-length spaces]
\label{def: basic prop}
Let $X$ be a Lorentzian pre-length space.
\begin{itemize}
\item[(i)] $X$ is called causally path-connected if $x \ll y$ implies that there exists a timelike curve from $x$ to $y$ and $x \leq y$ implies there is a causal curve from $x$ to $y$.

\item[(ii)] Let $x \in X$ and let $U \subseteq X$ be a neighbourhood of $x$. $U$ is called causally closed if ${\leq}|_{\widebar{U} \times \widebar{U}}$ is closed, i.e., for all sequences $(p_n)_{n \in \N}, (q_n)_{n \in \N}$ in $U$ with $p_n \leq q_n$ for all $n \in \N$ and $p_n \to p \in \widebar{U}, q_n \to q \in \widebar{U}$ we have $p \leq q$.

\item[(iii)] $X$ is called locally causally closed if every point has a causally closed neighbourhood.

\item[(iv)] Define the function $\Tau: X \times X \to [0,\infty]$, 
\begin{equation}
\Tau(x,y):=\sup \{L_{\tau}(\gamma) \mid \gamma \text{ is a future-directed causal curve from $x$ to $y$}\}.
\end{equation}
We say $X$ is intrinsic if $\tau=\Tau$. 

\item[(v)] $X$ is called $d$-compatible if for all $x \in X$ there is a neighbourhood $U \subseteq X$ and a positive constant $C$ such that $L_d(\gamma) \leq C$ for all causal curves $\gamma$ contained in $U$.

\item[(vi)] A subset $A$ of $X$ is called causally convex if $J(p,q) \subseteq A$ for all $p,q \in A$\footnote{By definition, $I^{\pm}(x), J^{\pm}(x)$ are causally convex for all $x \in X$. 
Moreover, intersections of causally convex sets are causally convex and hence in particular causal and timelike diamonds are causally convex. 
Note that if $X$ is causally path-connected, then causal convexity can be formulated as in the case of spacetimes, namely all causal curves between points in $A$ are entirely contained in $A$.}.

\item[(vii)] $X$ is called interpolative if for all $x,z \in X$ with $x \ll z$ there exists $y \in X$ such that $x \ll y \ll z$.
\end{itemize}
\end{defin}

\begin{defin}[Lorentzian length space]
A locally causally closed, causally path-connected, localizable and intrinsic Lorentzian pre-length space is called a Lorentzian length space.
\end{defin}

When dealing with metric spaces or even topological spaces, the notion of a subspace is of great importance and hence also desired in our setting.

\begin{defin}[Lorentzian subspace]
\label{def: subspace}
Let $A$ be a subset of a Lorentzian pre-length space $\lpls$. There is a natural way to view $A$ as a subspace, namely restricting the original structure of $X$ to $A$, i.e., the subspace $A$ is the Lorentzian pre-length space $(A,d|_{A \times A}, {\ll}|_{A \times A}, \\
{\leq}|_{A \times A}, \tau|_{A \times A})$.
\end{defin}

As in the metric case, a subspace of a Lorentzian length space is in general not a Lorentzian length space anymore. 
The next definition introduces the key property required of subsets along which one wants to glue.

\begin{defin}[Timelike isolation]
Let $X$ be a Lorentzian pre-length space.
\begin{itemize}
\item[(i)] $X$ is said to contain no $\ll$-isolated points if $I^{\pm}(x) \neq \emptyset$ for all $x \in X$.
\item[(ii)] A subset $A \subseteq X$ is said to be non-future locally isolating if for all $a \in A$ with $I^+(a) \neq \emptyset$ and for all neighbourhoods $U$ of $a$ there exists $b_+ \in U \cap A$ such that $a \ll b_+$. Similarly, we define a non-past locally isolating set. We say $A$ is non-timelike locally isolating if it satisfies both properties.
\end{itemize}
Note that $X$ (as a subset of itself) is non-timelike locally isolating if and only if no open subset of $X$ (viewed as a subspace) contains any $\ll$-isolated points. 
\end{defin}

Amalgamation in the metric case consists of two steps: first forming the disjoint union and then forming the quotient with respect to an equivalence relation which results from the identification of distinguished subsets. The concept of a disjoint union construction can be adapted naturally. For more details on amalgamation in the metric case see \cite{BBI, BH99}.

\begin{defin}[Lorentzian disjoint union]
Let $(X_1,d_1,\ll_1,\leq_1,\tau_1)$ and $(X_2,d_2,\ll_2,\leq_2,\tau_2)$ be two Lorentzian pre-length spaces and set $X:= X_1 \sqcup X_2$. Define ${\leq} := {\leq_1} \sqcup {\leq_2}$, i.e., ``${\leq} \subseteq X \times X$'' and $x \leq y :\iff \exists i \in \{1,2\}: x,y \in X_i \wedge x \leq_i y$. Similarly, define ${\ll} := {\ll_1} \sqcup {\ll_2}$. Let $d$ be the disjoint union metric on $X$,  
i.e., 
\begin{equation}
d(x,y)=
\begin{cases} d_i(x,y) & x,y \in X_i \\
\infty & \, \text{else}.
\end{cases}
\end{equation}
Define $\tau:X \times X \rar [0,\infty]$ by 
\begin{equation}
\tau(x,y):=
\begin{cases} \tau_i(x,y) & x,y \in X_i \\
0 & \, \text{else}.
\end{cases}
\end{equation}
We call $\lpls$ the Lorentzian disjoint union of $X_1$ and $X_2$.
\end{defin}

It is easily seen that this always results in a Lorentzian pre-length space, cf.\cite[Proposition 3.2.2]{BR22}.
This is in some contrast to the following quotient construction. While the map introduced below is well-defined, it may happen that it is not lower semi-continuous. For a counterexample see \cite[Example 3.1.8]{BR22}.

\begin{defin}[Quotient Lorentzian structure]
Let $X$ be a Lorentzian pre-length space and let $\sim$ be an equivalence relation on $X$. The quotient time separation $\ttau: X \times X \to [0,\infty]$ is defined as
\begin{equation}
\ttau([x],[y]):=\sup \{ \sum_{i=1}^n \tau(x_i,y_i) \mid x \sim x_1 \leq y_1 \sim x_2 \leq y_2 \sim \ldots \sim x_n \leq y_n \sim y, n \in \N \}.
\end{equation}
We call a sequence $(x_1,y_1,\ldots, x_n,y_n)$ as above an $n$-chain from $[x]$ to $[y]$, or simply a chain.
We define $[x] \tll [y]: \iff \ttau([x],[y]) > 0$ and $[x] \tleq [y]: \iff \{ \sum_{i=1}^n \tau(x_i,y_i) \mid x \sim x_1 \leq y_1 \sim x_2 \leq y_2 \sim \ldots \sim x_n \leq y_n \sim y, n \in \N \} \neq \emptyset$. Moreover, recall the definition of the quotient semi-metric, 
\begin{equation}
\td([x],[y]):= \inf \{\sum_{i=1}^n d(x_i,y_i) \mid x \sim x_1, x_{i+1} \sim y_i, y_n \sim y, n \in \N \}, 
\end{equation}
cf.\ \cite[Definition 3.1.12]{BBI}.
\end{defin}

Under the assumption that the identified subsets are closed and non-timelike locally isolating, the Lorentzian amalgamation is always a Lorentzian pre-length space.
\begin{defin}[Lorentzian amalgamation]
\label{def: gluing}
Let $(X_1,d_1,\ll_1,\leq_1,\tau_1)$ and $(X_2,d_2,\ll_2,\leq_2,\tau_2)$ be two Lorentzian pre-length spaces. 
Let $A_1$ and $A_2$ be closed and non-timelike locally isolating subspaces of $X_1$ and $X_2$, respectively. 
Let $f : A_1 \rar A_2$ be a locally bi-Lipschitz homeomorphism\footnote{This means that every $a \in A_1$ has a neighbourhood $U \subseteq A_1$ such that $f|_U:U \to f(U)$ and its inverse are Lipschitz.} and suppose that the causality of $A_1$ and $A_2$ are compatible in the following sense: for all $a \in A_1$ we have $I_1^{\pm}(a) \neq \emptyset \iff I_2^{\pm}(f(a)) \neq \emptyset$. 
Let $(X_1 \sqcup X_2, d, \ll, \leq, \tau)$ be the Lorentzian disjoint union of $X_1$ and $X_2$ and consider the equivalence relation $\sim$ on $X_1 \sqcup X_2$ generated by $a \sim f(a)$ for all $a \in A_1$. Then $((X_1 \sqcup X_2)/\sim, \td, \tll, \tleq, \ttau)$ is called the Lorentzian amalgamation of $X_1$ and $X_2$ and is denoted by $X_1 \sqcup_A X_2$.
\end{defin}

Note that Lorentzian amalgamation can be defined with surprisingly little compatibility of the structure in the two different subsets. However, without such assumptions this construction is usually poorly behaved, see \cite[Example 3.3.4]{BR22} for an extreme counterexample.
In particular, there is no hope for any compatibility results.
Thus, unless explicitly stated otherwise, we will always assume that $f$ is $\tau$-preserving and $\leq$-preserving, i.e., $\tau(a,b)=\tau(f(a),f(b))$ and $a \leq b \iff f(a) \leq f(b))$ for all $a,b \in A_1$. In similar fashion, when dealing with Lorentzian amalgamation, $X$ will always denote the amalgamation of two Lorentzian pre-length spaces $X_1$ and $X_2$ along the subsets $A_1$ and $A_2$, and we will identify $X_i$ with its image $\pi(X_i)$. \\

We also want to explain and justify the notation we will be using from now on. Dealing with causality in three different spaces at once can be overwhelming. 
Therefore, we decided to highlight in the notation of causal and timelike pasts and futures in which space the respective set is considered. 
Moreover, we prefer to not use the identifying map $f$ when denoting points in $A_2$. 
Rather, we feel it is advantageous to highlight with an index from which space a point comes originally, in addition to using bracket-notation for equivalence classes for points in $X$. For example, if $[x] \in X_1 \setminus A_1$, then $[x] = \{x^1\}$. If $[a] \in A$, then we write $[a]=\{a^1,a^2\}$ instead of $\{a, f(a)\}$. 
Further, the (timelike) future of $x^1 \in X_1$ is denoted by $I_1^+(x^1)$, while the future of its equivalence class in $X$ is denoted by $I_X^+([x]$). Similar notation will be used for sets related to the causal relation, e.g., $J_2(x^2,y^2)$ and $J_X([x],[y])$ for causal diamonds in $X_2$ and $X$, respectively.
Finally, the usage of $A$ is technically a bit misleading since this set is not yet introduced. On the one hand, we feel it is quite clear what is meant by $A$, namely the shared set in the glued space. On the other hand, one can set $A:=\pi(A_1 \sqcup A_2)$ for a rigorous definition. 
Compared to \cite{BR22}, one tiny detail in notation will appear different to an attentive reader, however: it is essentially never necessary to mention the original relations $\ll_1, \leq_1, \ll_2, \leq_2$ or the original time separation functions $\tau_1, \tau_2$. Rather, it is enough to deal with the corresponding notions in the disjoint union. 
That is, in the language of Definition \ref{def: gluing} we have $x^1 \leq y^1$ if and only if $x^1 \leq_1 y^1$, and thus omitting the index in $\leq$ leads to easier readability. \\

For a more detailed investigation of Lorentzian amalgamation and its elementary properties, we refer to \cite{BR22}. We will nevertheless collect some of the most useful facts for the current work here. We emphasize one final time that we only consider gluing constructions where the identifying map is well-behaved (otherwise, the following properties are of course not valid).

\begin{prop}[Useful facts about Lorentzian amalgamation]
\label{prop: gluing properties}
Let $X:=\amA$ be the Lorentzian amalgamation of two Lorentzian pre-length spaces $X_1$ and $X_2$.
\begin{itemize}
\item[(i)] 
$\ttau$ has the following simplified form:
\begin{equation}
\ttau([x],[y])=
\begin{cases} \tau(x^i,y^i) & x^i,y^i \in X_i, i \in \{1,2\}, \\
\underset{[a] \in J_X([x],[y]) \cap A}{\sup} \{ \tau(x^i,a^i)+\tau(a^j,y^j) \} & x^i \in X_i, y^j \in X_j, \{i,j\} = \{1,2\}.
\end{cases}
\end{equation}
Moreover, if $\ttau([x],[y]) > 0$, the supremum can be approximated by only considering timelike chains.
\item[(ii)] $\ttau([x],[y] \geq \tau(x^i,y^j), \td([x],[y]) \leq d(x^i,y^j), x^i \ll y^j \Rightarrow [x] \tll [y]$ and $x^i \leq y^j \Rightarrow [x] \tleq [y], i,j \in \{1,2\}$.
\end{itemize}
\end{prop}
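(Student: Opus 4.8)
The plan is to treat the two parts separately, with essentially all the content concentrated in (i); part (ii) falls straight out of the definitions. For (ii), observe that both $\ll$ and $\leq$ in the disjoint union only relate points lying in the same $X_i$, so $\tau(x^i,y^j)=0$ unless $i=j$. The single-step chain $x \sim x \leq y \sim y$ (admissible precisely when $x^i \leq y^j$) already shows $\ttau([x],[y]) \geq \tau(x^i,y^j)$ from the defining supremum, while the analogous trivial chain in the infimum defining $\td$ gives $\td([x],[y]) \leq d(x^i,y^j)$. The two implications are then immediate: $x^i \ll y^j$ forces $\tau(x^i,y^j)>0$, hence $\ttau([x],[y])>0$ and $[x]\tll[y]$; and $x^i \leq y^j$ makes the admissible set in the definition of $\tleq$ nonempty, hence $[x]\tleq[y]$.

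For (i) the strategy is a \emph{straightening} argument that rewrites an arbitrary chain into a canonical one without decreasing its value, using only the reverse triangle inequality for $\tau$ and the fact that $f$ is $\leq$- and $\tau$-preserving. I would first record two reduction moves. \textbf{Merging:} each step $x_k \leq y_k$ of a chain lies in a single $X_i$ (the disjoint-union relation never crosses), and each junction $y_k \sim x_{k+1}$ is either an equality or a genuine jump at $A$; any maximal run of consecutive same-space steps joined by equalities may be collapsed to the single step from its first to its last point, and the reverse triangle inequality ensures the value does not drop. After merging we may assume every junction is a genuine jump, so its endpoints lie in $A$. \textbf{Excursion removal:} whenever the chain leaves a space at $a\in A$ and re-enters the \emph{same} space at $b\in A$, the intervening steps live in the other space with both endpoints in $A$; since $f$ preserves $\leq$ and $\tau$, such a detour transports to a single step $a\leq b$ of value $\tau(a^i,b^i)=\tau(a^j,b^j)$, which is at least the detour value and which is then absorbed by merging. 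Each application lowers the number of $A$-crossings by two.

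These two moves settle both cases. If $x^i,y^i$ lie in the same $X_i$, repeated excursion removal transports every foreign step back into $X_i$, leaving a single-space chain whose value is $\leq \tau(x^i,y^i)$ by the reverse triangle inequality; the trivial chain supplies the matching lower bound, so $\ttau([x],[y])=\tau(x^i,y^i)$. If $x^i,y^j$ lie in different spaces, every chain must cross $A$ an odd number of times, and excursion removal brings it down to a single crossing $x^i \leq a^i \sim a^j \leq y^j$ of value $\tau(x^i,a^i)+\tau(a^j,y^j)$ with $[a]\in J_X([x],[y])\cap A$, giving the ``$\leq$'' half after taking the supremum. The reverse inequality is cleaner: for any such $[a]$, the reverse triangle inequality for $\ttau$ combined with the already-proved same-space case yields $\ttau([x],[y]) \geq \ttau([x],[a])+\ttau([a],[y]) = \tau(x^i,a^i)+\tau(a^j,y^j)$.

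The main obstacle is the concluding claim that, when $\ttau([x],[y])>0$, the supremum may be computed over timelike chains. Positivity of the value only guarantees that \emph{some} step of a near-optimal chain is timelike, not that the causal-only connections across $A$ can be removed, so naively deleting the steps with $\tau=0$ destroys connectivity. Here I expect to invoke the standing hypotheses baked into the amalgamation, namely that $A_1,A_2$ are non-timelike locally isolating, together with the lower semicontinuity of $\tau$: near an optimal crossing $[a]$ one perturbs $a$ within $A$ to a nearby $a'$ realizing both $x^i \ll (a')^i$ and $(a')^j \ll y^j$, while lower semicontinuity keeps $\tau(x^i,(a')^i)+\tau((a')^j,y^j)$ within $\varepsilon$ of the supremum. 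Arranging this perturbation to be simultaneously timelike on both sides, and verifying that it is available exactly under the hypothesis $\ttau([x],[y])>0$, is the delicate point; everything else is bookkeeping with the two reduction moves above.
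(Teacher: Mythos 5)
The paper itself does not prove this proposition; it simply cites \cite[Remark 3.1.3, Proposition 3.3.7, Lemma 3.3.3, Corollary 3.1.7]{BR22}, so your self-contained argument cannot be compared line-by-line with anything in this text. That said, your treatment of (ii) and of the displayed formula in (i) is correct and is essentially the standard chain-reduction argument from \cite{BR22}: the ``merging'' and ``excursion removal'' moves, justified by the reverse triangle inequality together with the $\leq$- and $\tau$-preservation of $f$, reduce any chain to either a single same-space step or a single crossing $x^i \leq a^i \sim a^j \leq y^j$ with $[a] \in J_X([x],[y]) \cap A$, and the matching lower bounds are immediate. (Minor remark: for the ``$\geq$'' half of the different-space case you do not even need the reverse triangle inequality for $\ttau$; the two-step chain through $[a]$ is itself admissible in the defining supremum.)

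The genuine gap is exactly where you flag it: the claim that for $\ttau([x],[y])>0$ the supremum is approximated by timelike chains. Your sketch asks for a perturbation $a'$ of the crossing point satisfying $x^i \ll (a')^i$ and $(a')^j \ll y^j$ \emph{simultaneously}, but gives no mechanism for producing the second relation, and indeed a generic perturbation of $a$ will not achieve both. The missing idea is a \emph{one-sided} perturbation combined with push-up. Suppose a near-optimal crossing has $\tau(x^i,a^i)>0$ but $\tau(a^j,y^j)=0$ (the other degenerate case is symmetric, and if both legs are already timelike there is nothing to do). Since $\tau$ is lower semi-continuous, $I^+(x^i)$ is open and contains $a^i$, so $U:=I^+(x^i)\cap B_\delta(a^i)$ is a neighbourhood of $a^i$; moreover $I^-(a^i)\neq\emptyset$ because $x^i \ll a^i$, so non-past local isolation of $A_i$ yields $b^i \in U \cap A_i$ with $b^i \ll a^i$. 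Then $x^i \ll b^i$ holds by construction, and $b^j \ll a^j \leq y^j$ \emph{pushes up} to $b^j \ll y^j$ via the reverse triangle inequality, so $x^i \ll b^i \sim b^j \ll y^j$ is a timelike chain; lower semi-continuity of $\tau(x^i,\cdot)$ at $a^i$ guarantees $\tau(x^i,b^i)>\tau(x^i,a^i)-\varepsilon$ for $\delta$ small, so only an $\varepsilon$ of value is lost. Without isolating this ``perturb into the timelike past of $a$ inside the open set $I^+(x^i)$, then push up on the far side'' step, the approximation claim remains unproved in your write-up.
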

\begin{proof}
See \cite[Remark 3.1.3, Proposition 3.3.7, Lemma 3.3.3, Corollary 3.1.7]{BR22}.
\end{proof}
\begin{rem}[Convergence in amalgamtion]
\label{rem: convergence in amalg}
Since $\td \leq d$, for any sequence $p_n^i \to p^i$ it follows that $[p_n] \to [p]$. Conversely, if $[p_n] \to [p]$ and $[p] \in X_i$, then for all subsequences in $X_i$ we have $p_{n_k}^i \to p^i$. In particular, there always exists a converging subsequence in at least one of the original spaces, 
cf. \cite[Remark 3.3.5]{BR22}. 
Indeed, if $[p] \in X_i \setminus A_i$, i.e., $[p] = \{p^i\}$, then $[p_n]=\{p_n^i\}$ for large enough $n$ as well. 
Let $U$ be an open neighbourhood of $p^i$ in $X_i$ that does not meet $A_i$ (in fact, $p^i$ has a neighbourhood basis of such sets). 
Then $\pi(U)$ is a neighbourhood of $[p]$ since $\pi^{-1}(\pi(U))=U$ is open, and as such contains all but finitely many sequence members $[p_n]$. 
Thus, all but finitely many members $p_n^i$ are contained in $U$ and so $p_n^i \to p^i$ follows.
Let $[p] \in A$.
For any $[p_n] \in X$, there exists $i \in \{1,2\}$ such that $p_n^i \in X_i$, thus there exists a subsequence in at least one of the original spaces. 
Say $(p_{n_k}^1)_{k \in \N}$ is a subsequence in $X_1$. Let $U_1$ be an open neighbourhood of $p^1$ in $X_1$. 
Then we find an open neighbourhood $U_2$ of $p^2$ in $X_2$ such that $f(U_1 \cap A_1)=U_2 \cap A_2$. Then $U:=\pi(U_1 \sqcup U_2)$ is an open neighbourhood of $[p]$ in $X$ and as such $[p_{n_k}] \in U$, i.e., $p^1_{n_k} \in U_1$ for almost all $k$. Hence $p^1_{n_k} \to p^1$.
\end{rem}
This remark remains valid for gluing finitely many spaces. Moreover, all statements in this work concerning gluing easily generalize to finitely many spaces, it is just more convenient to formulate everything with only two spaces at hand. \\

At last, we will briefly present the causal ladder for Lorentzian pre-length spaces. As noted in the introduction, this was first introduced in \cite{KS18} and further developed in \cite{ACS20}. Both works formulated the steps for Lorentzian length spaces, but in the spirit of generality, we try to do everything in the setting of Lorentzian pre-length spaces and rather specify the precise additional assumptions that are needed. \\

On a somewhat related note, there is another notion of local causal closure introduced in \cite[Definition 2.19]{ACS20} and also used in \cite{BGH21} which is more general than Definition \ref{def: basic prop}(iii) and appears more advantageous in intrinsic and/or causally path-connected settings. We stick to the original definition for two reasons: on the one hand, we prefer to not use intrinsic notions explicitly whenever possible, and on the other hand we use local causal closedness anyways only when we also assume strong causality, which is why we do not really lose any generality.
\begin{defin}[Preparing definitions for causal ladder]
Let $X$ be a Lorentzian pre-length space.
\begin{itemize}
\item[(i)] $X$ is distinguishing if $I^{\pm}(x)=I^{\pm}(y) \Rightarrow x=y$ for all $x,y \in X$.
\item[(ii)] $X$ is reflective if $I^{\pm}(x) \subseteq I^{\pm}(y) \Rightarrow I^{\mp}(y) \subseteq I^{\mp}$ for all $x,y \in X$.
\item[(iii)] The relation $K$ is defined as the smallest transitive and closed relation that contains $\leq$.
\end{itemize}
\end{defin}

\begin{defin}[The causal ladder]
Let $X$ be a Lorentzian pre-length space.
\begin{itemize}
\item[(i)] $X$ is chronological if $\ll$ is irreflexive, i.e., $x \not\ll x$ for all $x \in X$.
\item[(ii)] $X$ is causal if $\leq$ is antisymmetric, i.e., $x \leq y$ and $y \leq x$ imply $x=y$.
\item[(iii)] $X$ is non-totally imprisoning if for every compact set $B \subseteq X$ there exists a constant $C>0$ such that $L^d(\gamma) \leq C$ for all causal curves $\gamma$ contained in $B$.
\item[(iv)] $X$ is strongly causal if $\mathcal{I}:=\{I(x,y) \mid x,y \in X\}$ forms a subbasis for the (metric) topology $\mathcal{D}$ on $X$.
\item[(v)] $X$ is $K$-causal\footnote{In \cite{ACS20}, this is called stable causality, which is equivalent in the smooth case.} if the relation $K$ is antisymmetric.
\item[(vi)] $X$ is causally continuous if it is distinguishing and reflective.
\item[(vii)] $X$ is causally simple if it is causal and $J^{\pm}(x)$ is closed for all $x \in X$.
\item[(viii)] $X$ is globally hyperbolic if it is non-totally imprisoning and $J(x,y)$ is compact for all $x,y \in X$.
\end{itemize}
\end{defin}

\begin{thm}[Implications of the causal ladder]
Let $X$ be a Lorentzian length space.
With the additional assumption of local compactness in $(v) \Rightarrow (iv)$, each step of the causal ladder implies the previous one.
\end{thm}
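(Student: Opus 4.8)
The statement collects the seven individual implications $(ii)\Rightarrow(i)$, $(iii)\Rightarrow(ii)$, \dots, $(viii)\Rightarrow(vii)$, and my plan is to treat them one at a time, working from the bottom of the ladder upward, since each rung mirrors the corresponding step of the smooth causal hierarchy and the arguments become progressively more demanding. Throughout I would freely use that a Lorentzian length space is causally path-connected and satisfies the push-up property, and that localizability bounds the $d$-length of causal curves in suitable neighbourhoods.

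\textbf{Bottom three rungs.} For $(ii)\Rightarrow(i)$ I would argue by contraposition: if $x \ll x$, then causal path-connectedness produces a closed timelike curve through $x$, and any interior point $y \neq x$ satisfies $x \leq y \leq x$, violating antisymmetry of $\leq$. For $(iii)\Rightarrow(ii)$, again contrapositively, a failure of causality gives distinct $x,y$ with $x \leq y \leq x$; concatenating causal curves from $x$ to $y$ and back yields a closed causal curve, and traversing it $n$ times produces causal curves of $d$-length $n \cdot L^d(\text{loop})$ inside the compact image of the loop, contradicting non-total imprisonment since $d(x,y) > 0$. For $(iv)\Rightarrow(iii)$ I would use the characterization of strong causality by arbitrarily small causally convex neighbourhoods: a causal curve meets each causally convex set in a single subinterval, so a causal curve inside a compact set $B$ is cut by a finite causally convex cover into finitely many pieces, each of bounded $d$-length by localizability; no local compactness is needed here.

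\textbf{Top three rungs.} For $(viii)\Rightarrow(vii)$ I would show that $J^{\pm}(x)$ is closed via a limit curve argument inside the compact diamonds supplied by global hyperbolicity (so that local compactness is not separately required), with causality itself coming from the already established $(iii)\Rightarrow(ii)$. For $(vii)\Rightarrow(vi)$ I would first upgrade closedness of $J^{\pm}$ to the identity $\overline{I^{\pm}(x)} = J^{\pm}(x)$, from which distinguishing follows directly (equal pasts and futures force $x \leq y \leq x$) and reflectivity follows from push-up. The step $(vi)\Rightarrow(v)$ is the translation of Minguzzi's result that causal continuity implies stable, hence $K$-, causality, and I would adapt that argument to the length-space setting as in the expansion of the ladder already carried out in the literature.

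\textbf{Main obstacle.} The delicate rung is $(v)\Rightarrow(iv)$, $K$-causal $\Rightarrow$ strongly causal, which is precisely where local compactness is assumed. I would argue contrapositively: if strong causality fails at $x$, there is a sequence of causal curves whose endpoints converge to $x$ but which repeatedly leave a fixed neighbourhood of $x$; local compactness lets me invoke a limit curve theorem to extract a limiting causal curve that is either closed through $x$ or trapped near it. Either way this produces two distinct points, each related to the other in the closed transitive relation $K$, contradicting antisymmetry of $K$. I expect this limit-curve extraction to be the main technical hurdle, both because it is the one place where the compactness hypothesis is indispensable and because controlling the limit of the escaping curves requires the most care.
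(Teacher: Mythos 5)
Your rung-by-rung sketch is correct, but note that the paper does not reprove these implications at all: its proof consists of the citation to \cite[Theorem 3.26]{KS18} and \cite[Theorem 3.16]{ACS20}, and your outline essentially reconstructs the arguments given there (non-constant closed timelike/causal loops for the bottom rungs, $\overline{I^{\pm}(x)}=J^{\pm}(x)$ plus push-up for causal simplicity implying causal continuity, and the limit-curve/closedness-of-$K$ argument for $K$-causality implying strong causality, with the required uniform $d$-length bound coming from localizability as you indicate). So the proposal takes essentially the same route as the literature the paper relies on, with no genuine gap.
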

\begin{proof}
These implications have been established in \cite[Theorem 3.26]{KS18} and \cite[Theorem 3.16]{ACS20}.
\end{proof}
\section{Basics and preparations}
\begin{prop}[Past and future representation]
\label{pastfuturerep}
Let $X$ be a Lorentzian pre-length space.
\begin{itemize}
\item[(i)] $J^{\pm}(x)=\cup_{y \in J^{\pm}(x)}J^{\pm}(y)$ for all $x \in X$.
\item[(ii)] If $X$ is interpolative, then $I^{\pm}(x)=\cup_{y \in I^{\pm}(x)}I^{\pm}(y)$ for all $x \in X$.
\end{itemize}
\end{prop}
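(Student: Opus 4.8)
The plan is to prove both equalities by double inclusion, and to treat only the future cases explicitly, since the past cases $J^-(x)=\bigcup_{y \in J^-(x)} J^-(y)$ and $I^-(x)=\bigcup_{y \in I^-(x)} I^-(y)$ follow by the time-dual argument (the roles of past and future are symmetric in all the hypotheses invoked). I begin with (i).

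For the inclusion $\bigcup_{y \in J^+(x)} J^+(y) \subseteq J^+(x)$, I take a point $z$ in the union, so that $z \in J^+(y)$ for some $y \in J^+(x)$. Then $x \leq y$ and $y \leq z$, and transitivity of $\leq$ (part of the causal space axioms, Definition of Lorentzian pre-length space (i)) yields $x \leq z$, i.e.\ $z \in J^+(x)$. For the reverse inclusion I use reflexivity of $\leq$: given $z \in J^+(x)$, the point $x$ itself lies in $J^+(x)$ because $x \leq x$, and $z \in J^+(x)=J^+(x)$ shows $z$ lies in the union (taking $y:=x$). This settles (i) using nothing beyond reflexivity and transitivity of $\leq$.

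For (ii), the inclusion $\bigcup_{y \in I^+(x)} I^+(y) \subseteq I^+(x)$ is proved identically, now invoking transitivity of $\ll$: if $x \ll y$ and $y \ll z$ then $x \ll z$. The reverse inclusion is the point where the extra hypothesis becomes essential. Because $\ll$ is in general \emph{not} reflexive, I cannot simply take $y:=x$ as in (i); instead, given $z \in I^+(x)$, that is $x \ll z$, interpolativity furnishes a point $y$ with $x \ll y \ll z$. Then $y \in I^+(x)$ and $z \in I^+(y)$, so $z$ lies in the union, completing the inclusion.

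The only genuine subtlety, and the single place where more than the bare causal space axioms is needed, is precisely the failure of reflexivity for $\ll$; this is exactly why interpolativity is assumed in (ii) but can be dispensed with in (i). No compactness, semi-continuity, or metric input enters the argument, so I expect no real obstacle beyond correctly identifying where reflexivity is available and where interpolativity must replace it.
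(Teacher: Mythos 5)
Your proof is correct and follows essentially the same route as the paper's: transitivity of the relevant relation gives one inclusion, reflexivity of $\leq$ (i.e.\ $x \in J^{\pm}(x)$) gives the other in (i), and interpolativity substitutes for the missing reflexivity of $\ll$ in (ii). Your explicit remark on \emph{why} interpolativity is needed precisely where reflexivity fails matches the paper's intent exactly.
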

\begin{proof}
(i) ``$\subseteq$'' holds anyways since $x \in J^{\pm}(x)$. Conversely, if $z \in J^{\pm}(y)$ for some $y \in J^{\pm}(x)$, then $z \in J^{\pm}(x)$ by the transitivity of $\leq$. \\

(ii) ``$\supseteq$'' follows as above via the transitivity of $\ll$. Conversely, if, say, $z \in I^+(x)$, then since $X$ is interpolative we find $y \in X$ such that $x \ll y \ll z$, hence $z \in I^+(y)$ and $y \in I^+(x)$.
\end{proof}
\begin{ex}[Being interpolative is a necessary condition]
Without requiring that $X$ is interpolative, the statement in Proposition \ref{pastfuturerep}(ii) is wrong in general. Consider, e.g., Minkowski space with an open rectangle removed, depicted as in Figure \ref{fig: interpol}.
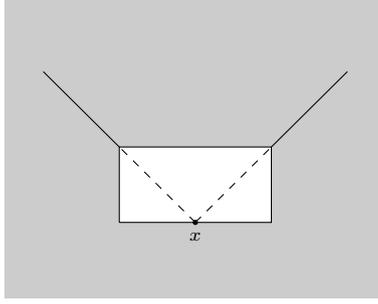
\begin{figure}
\begin{center}
\begin{tikzpicture}
\fill[black!20] (-1.5,-1) rectangle (3.5,3);
\draw [fill=white] (0,0) -- (2,0) -- (2,1) -- (0,1) -- (0,0);
\draw[dashed] (1,0) -- (2,1);
\draw[dashed] (1,0) -- (0,1);
\draw (2,1) -- (3,2);
\draw (0,1) -- (-1,2);
\begin{scriptsize}
\coordinate [circle, fill=black, inner sep=0.7pt, label=270: {$x$}] (A1) at (1,0);


\end{scriptsize}
\end{tikzpicture}
\end{center}
\caption{In this case, $I^{+}(x)=\cup_{y \in I^{+}(x)}I^{+}(y)$ does not hold.}
\label{fig: interpol}
\end{figure}
Then for a point $x$ on the lower boundary of the rectangle equality does not hold in the future case, since points on the upper boundary are not captured in any future of points in the future of $x$.
\end{ex}
\begin{lem}[Quotient relation reformulation]
\label{relation reformulation}
In an amalgamation $X:= \amA$, we have $[x] \tleq [y]$ if and only if $x^i \leq y^i$ for $i \in \{1,2\}$ or there exists $[a] \in A$ such that $x^i \leq a^i \sim a^j \leq y^j$ for $\{i,j\} = \{1,2\}$. Similarly for $\tll$.
\end{lem}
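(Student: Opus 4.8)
The plan is to prove the biconditional by unwinding the definition of $\tleq$ into the existence of a chain and then reducing any such chain to one of the two asserted normal forms. The backward direction is immediate: if $x^i \leq y^i$ then $[x] \tleq [y]$ by Proposition~\ref{prop: gluing properties}(ii), while if $x^i \leq a^i \sim a^j \leq y^j$ then the two-link chain $x \sim x^i \leq a^i \sim a^j \leq y^j \sim y$ witnesses $[x] \tleq [y]$ directly. So the content is the forward direction, which I would prove by strong induction on the number $n$ of causal links in a chain $x \sim x_1 \leq y_1 \sim x_2 \leq \cdots \sim x_n \leq y_n \sim y$ realizing $[x] \tleq [y]$, noting that a causal relation in the disjoint union forces both endpoints into one space, so each link $x_k \leq y_k$ lies in a single $X_{i_k}$.

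Two elementary observations drive the reduction. \emph{First}, since $f$ is a bijection, every $\sim$-class meets each $X_i$ in at most one point; hence whenever two points lie in the same $X_i$ and are $\sim$-related they are in fact equal. Consequently, if two consecutive links share a space ($i_k = i_{k+1}$), the connecting relation $y_k \sim x_{k+1}$ forces $y_k = x_{k+1}$, and transitivity of $\leq$ merges the two links into one, lowering $n$ by one. \emph{Second}, suppose a link $x_k \leq y_k$ is flanked by two genuine switches $y_{k-1} \sim x_k$ and $y_k \sim x_{k+1}$ with $i_{k-1} \neq i_k \neq i_{k+1}$. Then $[x_k]$ and $[y_k]$ each have representatives in both spaces, so $x_k, y_k \in A_{i_k}$; applying the $\leq$-preservation of $f$ transports $x_k \leq y_k$ to the relation $y_{k-1} \leq x_{k+1}$ between the corresponding points in the other copy. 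Since there are only two spaces we have $i_{k-1} = i_{k+1}$, so links $k-1$, $k$, $k+1$ then all lie in $X_{i_{k-1}}$ and collapse by transitivity into a single link, lowering $n$ by two.

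With these tools the induction is routine. For $n = 1$ the end identifications make $x_1 = x^{i_1}$ and $y_1 = y^{i_1}$, giving $x^{i_1} \leq y^{i_1}$, the first normal form. For $n = 2$ with $i_1 \neq i_2$ the middle switch identifies $[y_1] = [x_2] =: [a] \in A$, yielding $x^{i_1} \leq a^{i_1} \sim a^{i_2} \leq y^{i_2}$, the second normal form; if instead $i_1 = i_2$ the first observation reduces to $n = 1$. For $n \geq 3$ I inspect the first three links: if any adjacent pair shares a space the first observation drops to $n-1$, and otherwise all three alternate and the second observation, applied to the middle link, drops to $n-2$; in either case the inductive hypothesis applies. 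The reduction therefore terminates in one of the two normal forms.

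The statement for $\tll$ follows from the same scheme. Since $f$ is $\tau$-preserving and $\tau(a,b) > 0 \iff a \ll b$, the map $f$ is also $\ll$-preserving. By definition $[x] \tll [y]$ means $\ttau([x],[y]) > 0$, and by Proposition~\ref{prop: gluing properties}(i) this supremum is approximated by timelike chains, i.e.\ chains all of whose links satisfy $x_k \ll y_k$. Running the identical reduction with $\ll$ in place of $\leq$, using transitivity of $\ll$ for the merging steps and $\ll$-preservation of $f$ for the double-switch elimination, keeps every link timelike and produces either $x^i \ll y^i$ or $x^i \ll a^i \sim a^j \ll y^j$. I expect the only delicate point to be the double-switch elimination: one must verify that a link sandwiched between two genuine switches really has both endpoints in $A$ before invoking the order-preservation of $f$, and keep track of the fact that with exactly two spaces the two flanking links automatically share a space and hence recombine.
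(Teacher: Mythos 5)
Your proof is correct and follows essentially the same route as the paper: the paper also reduces an arbitrary chain by cutting out trivial identifications outside $A$ and then repeatedly ``shortening'' $a_k^i \leq a_{k+1}^i \sim a_{k+1}^j \leq a_{k+2}^j$ to $a_k^i \leq a_{k+2}^i$ via the $\leq$-preservation of $f$, handling the timelike case through Proposition \ref{prop: gluing properties}(i) exactly as you do. Your version merely makes the induction and the two reduction moves explicit where the paper states them informally.
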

\begin{proof}
``$\Leftarrow$'' of the statement follows immediately by Proposition \ref{prop: gluing properties}(ii).
Conversely, $[x] \tleq [y]$ states by definition that we find a causal chain of the form\footnote{This follows by \cite[Remark 3.1.7]{BR22}: points outside of $A$ form their own equivalence class, so these ``trivial'' equivalences can be immediately cut out via the transitivity of the relations.} $x^i \leq a_1^i \sim a_1^j \leq a_2^j \sim a_2^i \leq \ldots \sim a_n^j \leq y^j$ (whether the chain starts and ends in the same space does not matter for the proof). By the causal preservation of the identifying map $f: A_1 \to A_2$, we have $a_k^1 \leq a_{k+1}^1 \iff a_k^2 \leq a_{k+1}^2$. Thus, one can ``shorten'' $a_k^i \leq a_{k+1}^i \sim a_{k+1}^j \leq a_{k+2}^j$ to $a_k^i \leq a_{k+2}^i$.
If the two endpoints in the chain are from the same space, then the chain reduces to a relation in the original space. If the two endpoints are from different spaces, then one single point in $A$ is needed to cross to the other side. \\

With Proposition \ref{prop: gluing properties}(i), the timelike case follows in complete analogy. 
\end{proof}
\begin{prop}[Glued past and future representation]
\label{causal rep}
Let $X:=\amA$ be the amalgamation of two Lorentzian pre-length spaces $X_1$ and $X_2$. Then we have the following decomposition of (pasts and) futures in $X$:
\begin{itemize}
\item[(i)] If $[x] \in A$, then $I_X^{\pm}([x])=\pi(I^{\pm}_1(x^1) \sqcup I^{\pm}_2(x^2))$.
\item[(ii)] If, say, $[x]=\{x^1\}$, then $I_X^{+}([x])=\pi(I_1(x^1) \sqcup (\cup_{x^1 \ll a^1} I_2^+(a^2)))\footnote{By $\cup_{x^1 \ll a^1} I_2^+(a^2)$ we mean the union of $I_2^+(a^2)$ over all $a^1 \in I_1^+(x^1) \cap A_1$. This should be clear as the letter $a$ is usually reserved for points from the identified sets and the expression anyways does not make sense for points outside of $A$.}$, and similarly for the past case.
\end{itemize}
The same holds for causal pasts and futures.
\end{prop}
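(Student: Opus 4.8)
The plan is to read off both equalities directly from the relational description in Lemma \ref{relation reformulation}, handling each inclusion separately and using the $\ll$- and $\leq$-preservation of the identifying map $f$ to deal with the case $[x] \in A$. In both parts the statement reduces to the claim that $[x] \tll [y]$ if and only if $[y]$ admits a representative lying in the indicated set, so the proof is essentially an unwinding of when a timelike chain from $[x]$ to $[y]$ exists. The causal versions are then obtained verbatim by replacing $\ll$ with $\leq$, $I$ with $J$, and invoking the $\leq$-part of Lemma \ref{relation reformulation} together with the $\leq$-preservation of $f$.

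For the inclusion ``$\supseteq$'' I would argue uniformly across both parts. If $[y]$ has a representative $y^i$ with $x^i \ll y^i$, then Proposition \ref{prop: gluing properties}(ii) gives $[x] \tll [y]$ at once, hence $[y] \in I_X^+([x])$. In the remaining summand of part (ii), $[y]$ has a representative $y^2 \in I_2^+(a^2)$ for some $a^1 \in A_1$ with $x^1 \ll a^1$; then $x^1 \ll a^1 \sim a^2 \ll y^2$ is a timelike crossing chain, so the ``$\Leftarrow$'' direction of Lemma \ref{relation reformulation} yields $[x] \tll [y]$.

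For ``$\subseteq$'' I would take $[y] \in I_X^+([x])$, i.e. $[x] \tll [y]$, and apply Lemma \ref{relation reformulation}, which splits into the case $x^i \ll y^i$ in a common space $X_i$ and the crossing case $x^i \ll a^i \sim a^j \ll y^j$ with $[a] \in A$. In part (ii), where $[x] = \{x^1\}$, the only available representative of $[x]$ is $x^1$, so the first case forces $x^1 \ll y^1$ (placing $[y]$ in $\pi(I_1^+(x^1))$) and the second case forces $i = 1$, i.e. $x^1 \ll a^1$ with $a^1 \in A_1$ and $a^2 \ll y^2$, placing $[y]$ in $\pi(\cup_{x^1 \ll a^1} I_2^+(a^2))$; this matches the claimed union exactly.

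The one step that is not pure bookkeeping is the ``$\subseteq$'' inclusion in part (i), where $[x] \in A$ has representatives $x^1, x^2$ in both spaces and the crossing case $x^i \ll a^i \sim a^j \ll y^j$ must be collapsed. Here $x^i, a^i \in A_i$, so the hypothesis that $f$ preserves $\ll$ transfers $x^i \ll a^i$ into $x^j \ll a^j$; transitivity with $a^j \ll y^j$ then gives $x^j \ll y^j$, so $[y] \in \pi(I_j^+(x^j))$ lies in the right-hand side. I expect this ``folding'' argument — available precisely because a point of $A$ sits simultaneously in both spaces — to be the only place where the causality-preservation of $f$ is genuinely needed, and it is exactly what makes the representation for $[x] \in A$ symmetric, in contrast to the union appearing in part (ii).
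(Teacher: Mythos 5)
Your proposal is correct and follows essentially the same route as the paper: both inclusions are read off from Lemma \ref{relation reformulation} together with Proposition \ref{prop: gluing properties}(ii), with the crossing chain in case (i) collapsed to a same-space relation via the causality preservation of $f$ (the ``folding'' you describe is exactly the chain-shortening mechanism already built into Lemma \ref{relation reformulation}). Your write-up is somewhat more explicit than the paper's, but there is no substantive difference.
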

\begin{proof}
(i) This is essentially one half of \cite[Lemma 3.3.8]{BR22}: ``$\supseteq$'' follows immediately by Proposition \ref{prop: gluing properties}(ii). 
Conversely, suppose $[y] \in I_X^+([x])$. Since $[x]=\{x^1,x^2\}$ we can find a chain as in Lemma \ref{relation reformulation} that starts in $x^i$ and ends in $y^i$, which then reduces to an original relation. \\

(ii) Here, ``$\supseteq$'' follows immediately as well.
Let $[y] \in I^+_X([x])$. Then by Lemma \ref{relation reformulation}, either $x^1 \ll y^1$ or $x^1 \ll a^1 \sim a^2 \ll y^2$.
These conditions precisely describe the right hand side of the equation.
\end{proof}

At present, it seems that strong causality is the ``correct threshold'' at which Lorentzian pre-length spaces behave reasonably (in the same sense as global hyperbolicity being the right notion for physically plausible spacetimes). This is because at the stage of strong causality, the topology and causality genuinely interact with each other in a desirable way.
In some cases, it is very convenient that $\mI$ is not just a subbasis but a basis for the topology. As it turns out, 
the deciding property is that of non-timelike local isolation. 
This result was first covered in \cite[Theorem 1.6.33]{Ber20}.
\begin{lem}[Basis for Alexandrov topology]
\label{lem: nlti basis}
Let $X$ be a strongly causal and non-timelike locally isolating Lorentzian pre-length space. Then $\mI$ forms a basis for the topology.
\end{lem}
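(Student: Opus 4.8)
The goal is to show that under strong causality and non-timelike local isolation, the collection $\mI = \{I(x,y) \mid x,y \in X\}$ is not merely a subbasis but an actual basis for the topology. Since strong causality already guarantees that $\mI$ is a subbasis, the topology generated by $\mI$ agrees with the metric topology $\mathcal{D}$, and every open set is a union of finite intersections of chronological diamonds. To upgrade ``subbasis'' to ``basis'', the essential task is to show that finite intersections of diamonds can themselves be covered by single diamonds; concretely, given a point $p$ lying in a finite intersection $\bigcap_{k=1}^m I(x_k,y_k)$, I must produce a single diamond $I(x,y)$ with $p \in I(x,y) \subseteq \bigcap_{k=1}^m I(x_k,y_k)$.

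The plan is to reduce to the case of two diamonds by induction on $m$, so the heart of the matter is: if $p \in I(u,v) \cap I(u',v')$, find $x,y$ with $p \in I(x,y) \subseteq I(u,v) \cap I(u',v')$. First I would choose the endpoints $x,y$ to lie in the intersection itself on either temporal side of $p$. Since $p \in I(u,v) \cap I(u',v')$ we have $u \ll p \ll v$ and $u' \ll p \ll v'$, so in particular $I^-(p)$ and $I^+(p)$ are nonempty. The idea is to pick $x \in I^-(p)$ sufficiently close to $p$ that $x \in I^+(u) \cap I^+(u')$, and symmetrically $y \in I^+(p)$ close to $p$ with $y \in I^-(v) \cap I^-(v')$; then by transitivity $I(x,y) \subseteq I(u,v) \cap I(u',v')$ and $p \in I(x,y)$. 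This is precisely where the two hypotheses enter together. Non-timelike local isolation guarantees that points $x \ll p$ and $p \ll y$ exist in \emph{every} neighbourhood of $p$ (using $I^{\pm}(p) \neq \emptyset$), so I can select such $x,y$ as close to $p$ as desired; strong causality (via lower semicontinuity of $\tau$) ensures that the open sets $I^+(u) \cap I^+(u')$ and $I^-(v) \cap I^-(v')$, which contain $p$, actually contain all points sufficiently near $p$ on the appropriate temporal side.

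The main obstacle, and the step requiring the most care, is verifying that one can simultaneously arrange $x$ to lie below $p$ \emph{and} inside the open set $I^+(u) \cap I^+(u')$. Non-timelike local isolation only supplies \emph{some} point $x \ll p$ in a given neighbourhood $U$ of $p$; it does not by itself place $x$ in the futures of $u$ and $u'$. The resolution is to first shrink to a neighbourhood: since $I^+(u) \cap I^+(u')$ is open and contains $p$, I would fix $U \subseteq I^+(u) \cap I^+(u')$, and then invoke non-timelike local isolation to extract $x \in U$ with $x \ll p$; this $x$ then automatically satisfies $u \ll x$ and $u' \ll x$. The symmetric argument handles $y$. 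I would also need to confirm the non-triviality hypothesis $I^{\pm}(p) \neq \emptyset$ is available: this follows because $p$ lies in a chronological diamond, so $u \ll p$ and $p \ll v$ already witness $I^{-}(p), I^{+}(p) \neq \emptyset$, which is exactly the condition under which non-timelike local isolation produces nearby chronologically related points.
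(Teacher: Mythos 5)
Your proposal is correct and follows essentially the same route as the paper: use strong causality to place $p$ inside a finite intersection of timelike diamonds contained in the given neighbourhood, then use non-timelike local isolation (with the non-emptiness of $I^{\pm}(p)$ witnessed by the diamond endpoints) to find chronologically related points on either side of $p$ inside a suitable open set, and conclude that the resulting diamond is contained in the intersection. The only cosmetic difference is that the paper picks $q_{\pm}$ inside the intersection $\bigcap_i I(x_i,y_i)$ itself and invokes its causal convexity via $I(q_-,q_+)\subseteq J(q_-,q_+)$, whereas you place the endpoints in $\bigcap_k I^+(x_k)$ and $\bigcap_k I^-(y_k)$ and finish by transitivity of $\ll$; both verifications are valid.
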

\begin{proof}
Let $U$ be a $\mathcal{D}$-neighbourhood of a point $p \in X$. 
Then by strong causality of $X$ we find points $x_1,y_1, \ldots, x_n,y_n$ such that $p \in \cap_{i=1}^n I(x_i,y_i) \subseteq U$. 
Clearly, this intersection is an open causally convex neighbourhood of $p$. Hence, by the non-timelike local isolation of $X$ we find points $q_-,q_+ \in \cap_{i=1}^n I(x_i,y_i)$ such that $q_- \ll p \ll q_+$. 
Then $p \in I(q_-,q_+) \subseteq J(q_-,q_+) \subseteq \cap_{i=1}^n I(x_i,y_i) \subseteq U$, showing that any neighbourhood of any point contains a timelike diamond which contains that point. Indeed, it is even possible to choose the ``endpoints'' of the diamond to lie in the original neighbourhood as well.
\end{proof}

\begin{cor}[Strongly causal Lorentzian length spaces]
\label{llsbasis}
Let $X$ be a strongly causal Lorentzian length space. Then $\mI$ forms a basis for the topology.
\end{cor}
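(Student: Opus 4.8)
The plan is to deduce this from the preceding Lemma \ref{lem: nlti basis}. Since $X$ is already assumed to be strongly causal, it suffices to show that every Lorentzian length space is non-timelike locally isolating; the conclusion that $\mI$ is a basis is then immediate.

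To establish non-timelike local isolation I would first exploit localizability. By definition a Lorentzian length space is localizable, so every point $p$ possesses a localizable neighbourhood $W$, and the non-triviality condition in Definition \ref{def: loc}(ii) guarantees $I^+(p) \cap W \neq \emptyset$, and likewise for the past. In particular $I^{\pm}(p) \neq \emptyset$ for every $p \in X$, so the standing hypothesis ``$I^+(a) \neq \emptyset$'' appearing in the definition of non-future locally isolating is automatically satisfied, and we may fix some $q$ with $p \ll q$.

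The second ingredient is causal path-connectedness, which a Lorentzian length space also satisfies by definition. Given an arbitrary neighbourhood $U$ of $p$ together with the point $q \in I^+(p)$ found above, causal path-connectedness yields a future-directed timelike curve $\gamma : [0,1] \to X$ from $p$ to $q$. Since $\gamma$ is timelike we have $\gamma(0) = p \ll \gamma(t)$ for every $t > 0$, and since $\gamma$ is continuous with $\gamma(0) = p \in \Int(U)$ we have $\gamma(t) \in U$ for all sufficiently small $t$. Choosing such a $t > 0$ and setting $b_+ := \gamma(t)$ produces a point $b_+ \in U$ with $p \ll b_+$. The past case is symmetric, using a past-directed timelike curve into $I^-(p)$. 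Hence $X$, viewed as a subset of itself, is non-timelike locally isolating, and invoking Lemma \ref{lem: nlti basis} with the assumed strong causality gives that $\mI$ is a basis for the topology.

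I do not expect a serious obstacle; the only point needing mild care is ensuring that the timelike curve supplied by causal path-connectedness actually enters the prescribed neighbourhood $U$ near its initial point, which is precisely where continuity of $\gamma$ at $0$ is used. One could instead try to use the maximal causal curves inside a localizable neighbourhood granted by Definition \ref{def: loc}(iii), but since $p \ll q$ only forces such a curve to have positive $\tau$-length rather than to be timelike along every subinterval, extracting the relation $p \ll b_+$ for a nearby $b_+$ is less direct; I would therefore favour the causal path-connectedness argument.
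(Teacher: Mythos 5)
Your proposal is correct and follows essentially the same route as the paper: the paper's proof also combines the non-triviality condition $I^{\pm}(p)\cap U\neq\emptyset$ of a localizable neighbourhood with causal path-connectedness to produce a timelike curve through $p$ that stays in the prescribed neighbourhood, and the paper explicitly notes right after the corollary that these two properties imply non-timelike local isolation, which is exactly your reduction to Lemma \ref{lem: nlti basis}. The only cosmetic difference is that the paper inlines the argument rather than factoring it through the lemma.
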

\begin{proof}
This follows immediately by the definition of localizing neighbourhoods. Indeed, let $p \in V:= \cap_{i=1}^n I(x_i,y_i) \subseteq U$ be as above, where $U$ is a localizable neighbourhood of $p$. Since $I^{\pm}(p) \cap U \neq \emptyset$ and $X$ is causally path-connected, we find a timelike curve $\gamma:[a,b] \to X$ through $p$, which by continuity is contained for some time in $V$, i.e., $p \in \gamma((-\varepsilon,\varepsilon)) \subseteq V$. Since $V$ is causally convex, any timelike diamond formed by points on $\gamma((-\varepsilon,\varepsilon))$ is contained in $V$ and hence in $U$.
\end{proof}

In particular, causal path-connectedness and the absence of $\ll$-isolated points imply that a Lorentzian pre-length space $X$ is non-timelike locally isolating.
The following three statements are slight generalizations of some of the points in \cite[Theorem 3.26]{KS18}.

\begin{lem}[Reformulating strong causality]
\label{str caus2}
In a causally path connected Lorentzian pre-length $X$ space with no $\ll$-isolated points, strong causality is equivalent to the non-existence of almost closed causal curves.
\end{lem}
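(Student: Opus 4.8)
The plan is to prove the two implications separately, after fixing the working formulation I will use: I say that $X$ \emph{admits an almost closed causal curve} at a point $p$ if there is a neighbourhood $U$ of $p$ such that \emph{every} neighbourhood $V \subseteq U$ of $p$ contains both endpoints of some causal curve that is not entirely contained in $U$. Consequently, the \emph{non-existence} of almost closed causal curves unwinds to: for every $p$ and every neighbourhood $U$ of $p$ there is a neighbourhood $V \subseteq U$ of $p$ such that every causal curve with both endpoints in $V$ stays inside $U$. The main structural tool is that, by the remark following Corollary \ref{llsbasis}, the hypotheses (causally path-connected, no $\ll$-isolated points) already force $X$ to be non-timelike locally isolating, so that once strong causality is available, Lemma \ref{lem: nlti basis} promotes $\mI$ from a subbasis to an honest basis of open causally convex diamonds.

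For the direction ``strongly causal $\Rightarrow$ no almost closed causal curves'', I would fix $p$ and a neighbourhood $U$. Strong causality produces finitely many diamonds with $p \in V := \cap_{i=1}^n I(x_i,y_i) \subseteq U$, and $V$ is causally convex as a finite intersection of causally convex sets. The key observation is that any causal curve with endpoints $a,b \in V$ automatically remains in $V$: every point $c$ on it satisfies $a \leq c \leq b$, hence $c \in J(a,b) \subseteq V$ by causal convexity. Thus this $V$ witnesses the absence of an almost closed causal curve at $p$, and as $p,U$ were arbitrary we are done. Notably, this direction does not even use causal path-connectedness.

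The substantial direction is ``no almost closed causal curves $\Rightarrow$ strongly causal'', and here all three hypotheses enter. Given $p$ and a neighbourhood $U$, I choose $V \subseteq U$ as in the reformulation above. Since $X$ has no $\ll$-isolated points, $I^{\pm}(p) \neq \emptyset$, and causal path-connectedness furnishes future-directed timelike curves issuing from and arriving at $p$; by continuity their points close to $p$ lie in $V$, so I can select $q_-,q_+ \in V$ with $q_- \ll p \ll q_+$, giving $p \in I(q_-,q_+)$, which is an open neighbourhood of $p$. I then claim $I(q_-,q_+) \subseteq U$: for any $r \in I(q_-,q_+)$ we have $q_- \ll r \ll q_+$, and causal path-connectedness lets me concatenate a timelike curve from $q_-$ to $r$ with one from $r$ to $q_+$ into a single causal curve whose endpoints $q_-,q_+$ lie in $V$; by the defining property of $V$ this curve stays in $U$, so in particular $r \in U$. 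Hence every neighbourhood of every point contains a timelike diamond around that point, so $\mI$ is a (sub)basis of $\mathcal{D}$ and $X$ is strongly causal.

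I expect the reverse implication to be the main obstacle, and within it the production of the diamond $I(q_-,q_+)$ inside $U$: this is the one place where the three hypotheses genuinely cooperate, namely the absence of $\ll$-isolated points to guarantee that $q_\pm$ exist, causal path-connectedness both to pull $q_\pm$ into $V$ along timelike curves and to reconstruct a causal curve through an arbitrary interior point $r$, and the no-almost-closed property to confine that curve to $U$. The forward implication, by contrast, is essentially a formal consequence of the causal convexity of finite intersections of diamonds and should be routine.
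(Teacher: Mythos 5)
Your proof is correct and follows essentially the same route as the paper: the paper cites \cite[Lemma 2.38]{KS18} for the forward direction (which you prove directly via causal convexity of finite intersections of diamonds), and its converse argument likewise uses the absence of $\ll$-isolated points plus causal path-connectedness to place points $q_-\ll p\ll q_+$ of a timelike curve inside the small neighbourhood $V$ and conclude $p\in I(q_-,q_+)\subseteq U$. The only cosmetic difference is that the paper unwinds ``no almost closed causal curves'' as $J(p,q)\subseteq U$ for $p,q\in V$, whereas you keep the curve formulation and spend one extra (valid) use of causal path-connectedness to reconstruct a causal curve through an arbitrary point of the diamond.
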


\begin{proof}
Strong causality implying the formulation with curves is already shown in \cite[Lemma 2.38]{KS18}.
Conversely, let $x \in X$ and let $U$ be a neighbourhood of $x$. 
Then we find a neighbourhood $V$ of $x$ contained in $U$ such that $J(p,q) \subseteq U$ for all $p,q \in V$.
Since $I^{\pm}(x) \neq \emptyset$, there is a timelike curve $\gamma : [a,b] \to X$ through $x$. 
In particular, setting $\gamma(0)=:x$, there is $\varepsilon >0$ such that $\gamma([-\varepsilon, \varepsilon]) \subseteq V$. Then by assumption $J(p,q) \subseteq U$ for $p,q \in \gamma([-\varepsilon,\varepsilon])$. Since $I(p,q) \subseteq J(p,q)$, we have found an element of the subbasis of the Alexandrov topology inside an arbitrary neighbourhood of $x$.
\end{proof}

\begin{lem}[Global hyperbolicity implies strong causality]
\label{causal ladder0}
A causally path-connected, locally causally closed and globally hyperbolic Lorentzian pre-length space $X$ with no $\ll$-isolated points is strongly causal. In particular, such a space is locally compact.
\end{lem}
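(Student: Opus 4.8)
The plan is to argue by contradiction through the curve reformulation and then extract a contradiction from a limit curve. Since $X$ is causally path-connected and has no $\ll$-isolated points, Lemma \ref{str caus2} applies, so proving strong causality amounts to ruling out almost closed causal curves. Suppose then that strong causality fails at some point $p$: there is a neighbourhood $U$ of $p$ and a sequence of future-directed causal curves $\gamma_n$ whose endpoints converge to $p$ while each $\gamma_n$ escapes $U$. Note that this failure persists for any smaller neighbourhood, since a curve that leaves $U$ a fortiori leaves any $U' \subseteq U$; this freedom will let me shrink $U$ below.

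First I would confine the curves to a compact diamond. As $p$ has no $\ll$-isolated point and $X$ is causally path-connected, choose $q_- \ll p \ll q_+$; then $I(q_-,q_+)$ is an open neighbourhood of $p$ and $J(q_-,q_+)$ is compact by global hyperbolicity. After shrinking I may assume $U \subseteq I(q_-,q_+)$. The key observation is that any causal curve with endpoints $a,b \in I(q_-,q_+)$ stays inside $J(q_-,q_+)$: for an intermediate point $c$ one has $q_- \leq a \leq c \leq b \leq q_+$, i.e.\ diamonds are causally convex (no push-up is needed here). Hence, for large $n$, every $\gamma_n$ lies in the compact set $J(q_-,q_+)$.

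Next I would pass to a limit. Non-total imprisonment, applied to the compact set $J(q_-,q_+)$, yields a uniform bound $L^d(\gamma_n) \leq C$. Reparametrizing by $d$-arclength and applying Arzelà--Ascoli together with the limit curve theorem of \cite{KS18} -- this is exactly the step that uses local causal closedness, to ensure the limit still respects $\leq$ -- I obtain a subsequence converging uniformly to a future-directed causal curve $\gamma$ whose two endpoints are both $p$. Since each $\gamma_n$ leaves $U$, pick $\gamma_n(s_n) \notin U$; these points lie in the compact $J(q_-,q_+)$ and subconverge to some $r \notin U$, so $r \neq p$. Then $L^d(\gamma_n) \geq d(\gamma_n(0),\gamma_n(s_n)) \to d(p,r) > 0$, so $\gamma$ has positive $d$-length and is a nontrivial closed causal loop based at $p$. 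Concatenating $\gamma$ with itself $k$ times gives causal curves $\gamma^{(k)}$ (causal because $\gamma(\text{end}) = p = \gamma(\text{start})$ lets transitivity of $\leq$ bridge successive loops) of $d$-length $k\,L^d(\gamma) \to \infty$ in the compact image of $\gamma$, contradicting non-total imprisonment. This rules out almost closed causal curves, so $X$ is strongly causal. For the final claim, local compactness is then immediate: $\overline{I(q_-,q_+)} \subseteq J(q_-,q_+)$ is a closed subset of a compact set, hence a compact neighbourhood of $p$, and since $X$ is non-timelike locally isolating (from causal path-connectedness and absence of $\ll$-isolated points) these timelike diamonds form a neighbourhood basis by Lemma \ref{lem: nlti basis}.

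The main obstacle is the limit curve step: guaranteeing a uniformly convergent subsequence whose limit is a \emph{genuinely nontrivial} causal curve through $p$. Everything hinges on feeding the uniform $d$-length bound from non-total imprisonment into a limit curve theorem and using local causal closedness to keep the limit causal; without local causal closedness the limiting curve could fail to be causal, and without the length bound no subsequence need converge at all.
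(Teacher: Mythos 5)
Your argument is correct and follows essentially the same route as the paper's proof: both reduce to the curve formulation via Lemma \ref{str caus2}, trap the offending curves in a compact causal diamond, use non-total imprisonment to get a uniform $d$-length bound, feed this into the limit curve theorem to produce a causal loop through the point, and then contradict non-total imprisonment (you by explicitly concatenating the loop, the paper by citing that non-totally imprisoning spaces are causal), with the local compactness claim handled identically via compact closed diamonds inside arbitrary neighbourhoods. One small point of wording: the nontriviality of the limit loop should be deduced from the fact that $\gamma$ itself passes through $r=\lim\gamma_{n_k}(s_{n_k})\neq p$ by uniform convergence, rather than from the lower bound $L^d(\gamma_n)\geq d(\gamma_n(0),\gamma_n(s_n))$, since a lower bound on the lengths of the approximating curves does not by itself pass to the limit curve.
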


\begin{proof}
If $X$ were not strongly causal, by Lemma \ref{str caus2} there exists $x \in X$ and a neighbourhood $U$ of $x$ such that for all neighbourhoods $V \subseteq U$ of $x$ there are points $p,q \in V$ and a causal curve between them leaving $U$. 
Since $I^{\pm}(x) \neq \emptyset$ and $X$ is causally path-connected, there is a timelike curve $\gamma: [a,b] \to X$ through $x$. 
By choosing points on this curve close enough to $x$, we find $p,q \in \gamma([a,b]) \cap U, p \ll x \ll q$, where $U$ is the supposed neighbourhood where strong causality fails. 
In particular, $x \in I(p,q)$. Since this set is open, there exists $r>0$ such that $B_r(x) \subseteq I(p,q) \subseteq J(p,q)$, where $J(p,q)$ is compact by assumption. 
Since $X$ is supposed to be not strongly causal, for each $r' <r$ there is a causal curve with endpoints in $B_{r'}(x)$ that leaves $U$. Consider a sequence $(\gamma_n)_{n \in \N}$ of such curves with endpoints $p_n,q_n \in B_{\frac{1}{n}}(x)$ for large enough $n$. 
The image of each $\gamma_n$ is contained in the compact and causally convex set $J(p,q)$, and since $X$ is non-totally imprisoning, there exists a constant $C>0$ such that $L_d(\gamma_n) \leq C$ for all $n$. 
Parameterized by $d$-arclength, each curve $\gamma_n$ has $[0,l_n]$ as domain, with $l_n \leq C$ and Lipschitz constant 1. 
Denote by $\tilde{\gamma}: [0,C] \to X$ the reparameterization of $\gamma_n$ given by $\tilde{\gamma_n}(t):=\gamma_n(\frac{l_n}{C}t)$. 
Then 
\begin{equation}
d(\tilde{\gamma_n}(s),\tilde{\gamma_n}(t))=d(\gamma_n \big( \frac{l_n}{C}s \big) , \gamma_n \big( \frac{l_n}{C}t \big) ) =\frac{l_n}{C}|t-s| \leq |t-s|.
\end{equation}
In summary, $\tilde{\gamma_n} : [0,C] \to X$ and Lip$(\tilde{\gamma_n}) \leq 1$ for all $n$. Thus, all requirements of the Limit curve theorem \cite[Theorem 3.7]{KS18} are satisfied.
Since $p_n,q_n \to x$, we infer the existence of a causal loop through $x$. 
Thus, $X$ is not causal, a contradiction to $X$ being non-totally imprisoning, cf.\ \cite[Theorem 3.26(ii)]{KS18} (the cited theorem is of course valid in a causally path-connected Lorentzian pre-length space). \\

Concerning local compactness, let $p \in X$ and let $U \subseteq X$ be a neighbourhood of $p$. By strong causality we find points $x_1,y_1,\ldots, x_n,y_n$ such that $p \in \cap_{i=1}^n I(x_i,y_i) \subseteq U$. Thus, $\cap_{i=1}^n J(x_i,y_i)$ is a compact neighbourhood of $p$ as each $J(x_i,y_i)$ is compact.
\end{proof}

\begin{lem}[Strong causality implies non-total imprisonment]
\label{causal ladder1}
A strongly causal, locally causally closed and $d$-compatible Lorentzian pre-length space is non-totally imprisoning.
\end{lem}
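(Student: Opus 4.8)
The plan is to give a direct covering argument: bound the $d$-length of every causal curve contained in a fixed compact set by a constant depending only on that set, not on the curve. Fix a compact set $B \subseteq X$. First I would invoke $d$-compatibility: every point $p \in B$ has an open neighbourhood $W_p$ and a constant $C_p > 0$ such that $L_d(\gamma) \leq C_p$ for every causal curve $\gamma$ contained in $W_p$. The obstacle with $W_p$ alone is that a causal curve might enter and leave it repeatedly, so a single-traversal bound does not immediately propagate to a global bound; to prevent this I would shrink $W_p$ to a causally convex neighbourhood.

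Strong causality supplies exactly this refinement. Since $\mI$ is a subbasis for the topology, there are points $x_1, y_1, \ldots, x_m, y_m$ with $p \in V_p := \bigcap_{i=1}^m I(x_i, y_i) \subseteq W_p$. Each $I(x_i, y_i)$ is causally convex, hence so is the finite intersection $V_p$, and $V_p$ is open with $p \in V_p \subseteq W_p$; therefore every causal curve contained in $V_p$ still obeys $L_d \leq C_p$. The family $\{V_p\}_{p \in B}$ is an open cover of the compact set $B$, so I extract a finite subcover $V_1, \ldots, V_N$ with associated constants $C_1, \ldots, C_N$.

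The heart of the argument is a ``no return'' property coming from causal convexity: for any causal curve $\gamma : [a,b] \to B$ the preimage $\gamma^{-1}(V_k)$ is a single subinterval of $[a,b]$. Indeed, if $t_1 < t < t_2$ with $\gamma(t_1), \gamma(t_2) \in V_k$, then $\gamma(t_1) \leq \gamma(t) \leq \gamma(t_2)$ because $\gamma$ is future-directed causal, so $\gamma(t) \in J(\gamma(t_1), \gamma(t_2)) \subseteq V_k$ by causal convexity; hence $\gamma^{-1}(V_k)$ is connected. Consequently $\gamma$ meets each $V_k$ in one connected stretch, and the restriction $\gamma|_{\gamma^{-1}(V_k)}$ is a causal curve contained in $V_k$, whose length is therefore at most $C_k$.

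Finally, since $\gamma([a,b]) \subseteq B \subseteq \bigcup_{k=1}^N V_k$, the intervals $\gamma^{-1}(V_k)$ cover $[a,b]$, and subadditivity of $d$-length over a covering by intervals yields $L_d(\gamma) \leq \sum_{k=1}^N C_k =: C$. As $C$ depends only on $B$, this is precisely non-total imprisonment. The main obstacle is the no-return step together with the uniformity it enforces: one must guarantee that the causally convex neighbourhoods furnished by strong causality sit inside the $d$-compatibility neighbourhoods so that each $C_k$ genuinely bounds the corresponding stretch, which is where strong causality enters decisively; local causal closedness is available as a standing hypothesis should one need the causal relation to remain well-behaved on these neighbourhoods.
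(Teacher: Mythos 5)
Your argument is correct, and it is essentially the argument that the paper's one-line proof delegates to the literature: the paper merely cites \cite[Lemma 3.12, Lemma 3.15 \& Theorem 3.26]{KS18}, and the content of those lemmas is precisely your covering scheme (causally convex refinements of the $d$-compatibility neighbourhoods, the no-return property, and summation over a finite subcover). Two remarks. First, the key step checks out against the paper's own conventions: each $I(x_i,y_i)$ is causally convex by the footnote to Definition 2.5(vi), finite intersections of causally convex sets are causally convex, and your observation that $\gamma^{-1}(V_k)$ is order-convex (hence an interval) for a future-directed causal $\gamma$ is exactly what makes the local constants $C_k$ sum to a global bound; the subadditivity of $L_d$ over a finite cover of $[a,b]$ by intervals is routine once one refines a partition by the interval endpoints. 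Second, it is worth noticing that your direct argument never invokes local causal closedness, even though it appears among the hypotheses of the lemma; that assumption is needed only if one follows the limit-curve route used elsewhere in \cite{KS18} (compare the proof of Lemma \ref{causal ladder0} in this paper), so your proof in fact establishes the slightly stronger statement that strong causality together with $d$-compatibility alone suffices. Carrying the unused hypothesis is harmless, but you could state explicitly that it plays no role in your argument.
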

\begin{proof}
This is just extracting the right properties of a Lorentzian length space needed in \cite[Lemma 3.12, Lemma 3.15 \& Theorem 3.26]{KS18}.
\end{proof}

\begin{cor}[Conditions for global causal closedness]
A globally hyperbolic, causally path-connected and locally causally closed Lorentzian pre-length space $X$ with no $\ll$-isolated points is globally causally closed, i.e., for any $p_n \to p, q_n \to q$ with $p_n \leq q_n$ for all $n$ we have $p \leq q$.
\end{cor}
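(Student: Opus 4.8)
The plan is to construct a limiting causal curve from $p$ to $q$ and to conclude $p \leq q$ via the limit curve theorem \cite[Theorem 3.7]{KS18}, reusing the machinery already assembled in the proof of Lemma \ref{causal ladder0}. Since the hypotheses here coincide exactly with those of Lemma \ref{causal ladder0}, the space $X$ is strongly causal and locally compact. If $p = q$ there is nothing to show, as $\leq$ is reflexive, so I would assume $p \neq q$, i.e.\ $d(p,q) > 0$. As $X$ is causally path-connected and $p_n \leq q_n$ for all $n$, I would fix for each $n$ a future-directed causal curve $\gamma_n$ from $p_n$ to $q_n$.

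The key step, which I expect to be the main obstacle, is to confine all the $\gamma_n$ to a single fixed compact set. Since $X$ has no $\ll$-isolated points, I can choose $a \ll p \ll b$ and $c \ll q \ll d$. Strong causality makes $\mI$ a subbasis for the topology, so the timelike diamonds $I(a,b)$ and $I(c,d)$ are open neighbourhoods of $p$ and $q$; hence $p_n \in I(a,b)$ and $q_n \in I(c,d)$ for all large $n$, which gives $a \ll p_n$ and $q_n \ll d$. For any point $r$ on $\gamma_n$ we have $p_n \leq r \leq q_n$, and the push-up property (if $x \ll y \leq z$ or $x \leq y \ll z$ then $x \ll z$, which is immediate from the reverse triangle inequality together with $\tau(x,y) > 0 \iff x \ll y$) then yields $a \ll r \ll d$. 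Therefore $\gamma_n \subseteq J(a,d)$ for all large $n$, and $J(a,d)$ is compact by global hyperbolicity.

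It then remains to run the standard limit-curve argument as in Lemma \ref{causal ladder0}. On the compact set $J(a,d)$ non-total imprisonment provides a uniform bound $L_d(\gamma_n) \leq C$; reparameterizing each $\gamma_n$ by $d$-arclength and rescaling to the common domain $[0,C]$ produces causal curves of Lipschitz constant at most $1$ with images in $J(a,d)$, so local compactness together with local causal closedness allows the limit curve theorem to apply. Because $p_n \to p$, $q_n \to q$ and $d(p,q) > 0$, the parameter lengths stay bounded away from $0$, so the resulting limit curve is a genuine future-directed causal curve from $p$ to $q$, whence $p \leq q$. The entire difficulty is concentrated in the enclosure step; once every $\gamma_n$ is trapped in $J(a,d)$, the remainder is a verbatim repetition of the corresponding part of Lemma \ref{causal ladder0}.
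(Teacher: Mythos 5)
Your proposal is correct and follows essentially the same route as the paper's proof: invoke Lemma \ref{causal ladder0} to get strong causality, trap the curves $\gamma_n$ in a single compact causal diamond via push-up, bound their $d$-lengths by non-total imprisonment, and conclude with the reparameterization and limit curve argument. (The only cosmetic difference is that you build the enclosing diamond from points $a \ll p \ll b$, $c \ll q \ll d$ supplied by the absence of $\ll$-isolated points rather than from the diamonds given directly by strong causality; note that openness of $I(a,b)$ already follows from lower semi-continuity of $\tau$ and does not require strong causality.)
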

\begin{proof}
This is a consequence of the Limit curve theorem \cite[Theorem 3.7]{KS18}: let $p_n \to p, q_n \to q$ with $p_n \leq q_n$ for all $n$. 
If $p = q$ we are done so assume $p \neq q$.
By Lemma \ref{causal ladder0}, $X$ is strongly causal, so there exist timelike diamonds $I(x_1,y_1)$ and $I(x_2,y_2)$ containing $p$ and $q$ as well as infinitely many sequence members $p_n$ and $q_n$, respectively. 
Then $p_n,q_n \in I(x_1,y_2) \subseteq J(x_1,y_2)$ by push-up, cf.\ \cite[Lemma 2.10]{KS18}. Moreover, $J(x_1,y_2)$ is compact by assumption and hence closed, so $p,q \in J(x_1,y_2)$ as well. By the causal path-connectedness we infer the existence of causal curves $\gamma_n$ connecting $p_n$ and $q_n$, which are all contained in $J(x_1,y_2)$. 
By the non-total imprisonment of $X$, we find $C>0$ such that $L^d(\gamma_n) \leq C$ for all $n$. 
With a reparameterization argument as in Lemma \ref{causal ladder0}, we can apply the Limit curve theorem to obtain a causal curve from $p$ to $q$, so in particular $p \leq q$.
\end{proof}

\begin{lem}[Recovering subsets]
\label{recovering subsets}
Let $X_1$ and $X_2$ be topological spaces, $A_i \subseteq X_i$, and $f:A_1 \to A_2$ bijective. Consider the quotient space of $X_1 \sqcup X_2$ generated by the equivalence relation $a \sim f(a)$ for all $a \in A_1$.
Let $Y_i \subseteq X_i$ be subsets such that $\pi(Y_1 \cap A_1)=\pi(Y_2 \cap A_2)$. Then $\pi^{-1}(\pi(Y_1 \sqcup Y_2))=Y_1 \sqcup Y_2$.
\end{lem}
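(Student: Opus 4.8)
The plan is to reduce everything to the standard fact that for a quotient map $\pi$ one has $\pi^{-1}(\pi(S)) = S$ exactly when $S$ is saturated, i.e.\ a union of $\sim$-equivalence classes, and then to recognise the hypothesis $\pi(Y_1 \cap A_1) = \pi(Y_2 \cap A_2)$ as precisely the saturation condition for $S := Y_1 \sqcup Y_2$. To set this up I would first record the shape of the classes: since $f$ is a bijection, $\sim$ identifies each $a \in A_1$ with $f(a)$ and makes no further identifications, so every class is either a singleton $\{x\}$ with $x \in (X_1 \setminus A_1) \sqcup (X_2 \setminus A_2)$, or a two-point class $\{a, f(a)\}$ with $a \in A_1$. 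The inclusion $Y_1 \sqcup Y_2 \subseteq \pi^{-1}(\pi(Y_1 \sqcup Y_2))$ holds for free, so only the reverse inclusion requires work.

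For the reverse inclusion I would take $z \in \pi^{-1}(\pi(Y_1 \sqcup Y_2))$, so that $[z] = [w]$ for some $w \in Y_1 \sqcup Y_2$. If $z = w$ we are immediately done; otherwise the classification of classes forces $\{z,w\} = \{a, f(a)\}$ for some $a \in A_1$. Handling the case $z \in A_1$ (so $w = f(z) \in A_2$, and hence $w \in Y_2 \cap A_2$ since $w \in Y_1 \sqcup Y_2$ lies in $X_2$), the argument closes as soon as we know $f(Y_1 \cap A_1) = Y_2 \cap A_2$: then $w = f(z) \in f(Y_1 \cap A_1)$, and injectivity of $f$ gives $z \in Y_1 \cap A_1 \subseteq Y_1$, so $z \in Y_1 \sqcup Y_2$. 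The symmetric case $z \in A_2$ is handled identically using $f^{-1}$.

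The only step carrying genuine content, and the single place where I expect any friction, is the translation of the hypothesis $\pi(Y_1 \cap A_1) = \pi(Y_2 \cap A_2)$ into the set equality $f(Y_1 \cap A_1) = Y_2 \cap A_2$. This rests on the observation that for $a \in A_1$ and $a' \in A_2$ one has $[a] = [a']$ if and only if $a' = f(a)$; unwinding the two inclusions between the sets of classes then yields $f(Y_1 \cap A_1) \subseteq Y_2 \cap A_2$ and $Y_2 \cap A_2 \subseteq f(Y_1 \cap A_1)$ respectively. With this identity in hand everything else is routine bookkeeping about saturated sets, and no topology of $X_1$ or $X_2$ is actually needed.
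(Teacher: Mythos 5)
Your proof is correct and in substance identical to the paper's: both rest on the observations that the only nontrivial equivalence classes are the pairs $\{a,f(a)\}$ and that the hypothesis $\pi(Y_1 \cap A_1)=\pi(Y_2 \cap A_2)$ is exactly the saturation condition on $A$ (equivalently, $f(Y_1 \cap A_1)=Y_2 \cap A_2$). The paper packages this as a preliminary claim about $\pi^{-1}(\pi((Y_1 \cap A_1) \sqcup (Y_2 \cap A_2)))$ followed by a set-algebra decomposition of $Y_i$ into its $A_i$-part and complement, whereas you argue pointwise via saturated sets; this is only a difference in bookkeeping.
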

\begin{proof}
We first show that $\pi^{-1}(\pi((Y_1 \cap A_1) \sqcup (Y_2 \cap A_2)))=(Y_1 \cap A_2) \sqcup (Y_2 \cap A_2)$. 
The inclusion ``$\supseteq$'' is always true.
So let $x \in \pi^{-1}(\pi((Y_1 \cap A_1) \sqcup (Y_2 \cap A_2)))$, then $\pi(x)=[x] \in \pi((Y_1 \cap A_1) \sqcup (Y_2 \cap A_2))$.
Clearly, $[x]=\{x^1,x^2\}$, so there exists $i \in \{1,2\}$ such that $x^i \in Y_i \cap A_i$, say without loss of generality $i=1$. Since $\pi(Y_1 \cap A_1) = \pi(Y_2 \cap A_2)$ by assumption, it then follows that $[x] \in \pi(Y_2 \cap A_2)$ as well. So we have $x^1,x^2 \in Y_1 \cap A_2 \sqcup Y_2 \cap A_2$. \\

The full statement easily follows from the following calculation, since, roughly speaking, for points outside of $A$ there is a unique preimage:
\begin{align*}
\pi^{-1}(\pi(Y_1 \sqcup Y_2)) & = \pi^{-1}(\pi((Y_1 \cap A_1) \cup (Y_1 \setminus A_1) \sqcup (Y_2 \cap A_2) \cup (Y_2 \setminus A_2))) \\
& = \pi^{-1}(\pi((Y_1 \cap A_1 \sqcup Y_2 \cap A_2) \cup (Y_1 \setminus A_1 \sqcup Y_2 \setminus A_2))) \\
& = \pi^{-1}(\pi(Y_1 \cap A_1 \sqcup Y_2 \cap A_2) \cup \pi(Y_1 \setminus A_1 \sqcup Y_2 \setminus A_2)) \\
& = \pi^{-1}(\pi(Y_1 \cap A_1 \sqcup Y_2 \cap A_2)) \cup \pi^{-1}(\pi(Y_1 \setminus A_1 \sqcup Y_2 \setminus A_2)) \\
& = (Y_1 \cap A_1 \sqcup Y_2 \cap A_2) \cup (Y_1 \setminus A_1 \sqcup Y_2 \setminus A_2) = Y_1 \sqcup Y_2.
\end{align*}
\end{proof}

\section{Gluing Lorentzian length spaces}
Next, we investigate the question of whether the amalgamation of Lorentzian length spaces is again a Lorentzian length space. We cover each defining property separately and obtain the final result in the end.

\begin{ex}[Being intrinsic vs.\ causal path-connectedness]
Note that an intrinsic Lorentzian pre-length space is not necessarily causally path-connected, as there might be points which are null related but do not have a null curve joining them. Indeed, consider the subspace (in the sense of Definition \ref{def: subspace}) of the Minkowski plane $J_+((0,0)) \setminus \{(x,y) \mid 1 < x=y < 2\}$, i.e., a causal future with a segment of a light ray removed. Then $(1,1) \leq (2,2), \tau((1,1),(2,2))=0$, but there is no causal curve connecting the two points, 
see Figure \ref{fig: intrinsic} on the left. 
Moreover, the two properties are independent, as the open subset of the Minkowski plane viewed as a subspace depicted on the right in Figure \ref{fig: intrinsic} is causally path-connected but not intrinsic.
\begin{figure}
\begin{center}
\begin{tikzpicture}
\draw [dashed,fill=black!20] (-3,3)--(0,0)--(3,3);
\draw [thick] (-3,3) -- (0,0) -- (1,1);
\draw [thick] (2,2) -- (3,3);
\begin{scriptsize}
\coordinate [circle, fill=black, inner sep=0.7pt] (A1) at (1,1);

\coordinate [circle, fill=black, inner sep=0.7pt] (A1) at (2,2);
\coordinate [circle, fill=black, inner sep=0.7pt, label=270: $0$] (A1) at (0,0);


\end{scriptsize}

\draw [dotted,fill=black!20] (5,0)--(6.5,1.5)--(5,3)--(7,3)--(8.5,1.5)--(7,0)--(5,0);
\coordinate [circle, fill=black, inner sep=0.7pt] (p) at (6.25,0.5);

\coordinate [circle, fill=black, inner sep=0.7pt] (q) at (6.25,2.5);
\draw (p) .. controls (6.75,1.4) and (6.75,1.8) ..(q);

\draw[dashed] (6.25,1.25)--(6.25,1.75);
\draw (p) -- (6.25,1.25);
\draw (q) -- (6.25,1.75);

\end{tikzpicture}
\end{center}
\caption{The space on the left is intrinsic but not causally path-connected. The space on the right is causally path-connected but not intrinsic.}
\label{fig: intrinsic}
\end{figure}
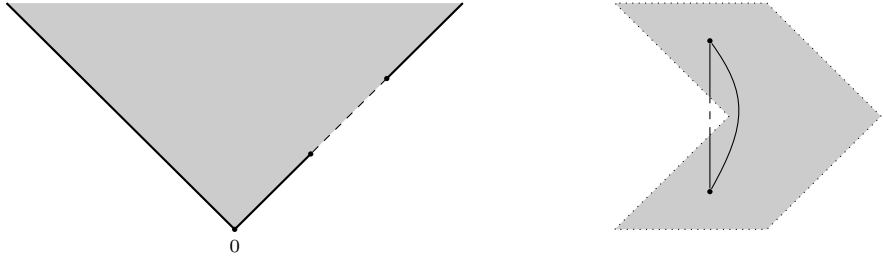
\end{ex}

\begin{prop}[Causal path-connectedness]
\label{prop: caus pc}
Let $X_1$ and $X_2$ be two causally path-connected Lorentzian pre-length spaces. Then $X:=X_1 \sqcup_A X_2$ is causally path-connected.
\end{prop}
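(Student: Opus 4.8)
The plan is to pull the problem back to the two factors by means of the relation reformulation of Lemma \ref{relation reformulation}, and then to push causal (resp.\ timelike) curves forward through the projection $\pi$ and concatenate. Suppose first that $[x] \tleq [y]$. By Lemma \ref{relation reformulation} there are only two cases: either $x^i \leq y^i$ within a single space $X_i$, or there is $[a] \in A$ with $x^i \leq a^i$ in $X_i$ and $a^j \leq y^j$ in $X_j$ for $\{i,j\} = \{1,2\}$. In the first case, causal path-connectedness of $X_i$ yields a causal curve $\gamma$ from $x^i$ to $y^i$ inside $X_i$. In the second case I would apply causal path-connectedness in each factor separately to obtain a causal curve $\gamma_1$ from $x^i$ to $a^i$ in $X_i$ and a causal curve $\gamma_2$ from $a^j$ to $y^j$ in $X_j$.

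The next step is to check that these curves descend to causal curves in $X$. Since $\td \leq d$ by Proposition \ref{prop: gluing properties}(ii), the projection $\pi$ is $1$-Lipschitz, so $\pi \circ \gamma$ (resp.\ each $\pi \circ \gamma_k$) is again locally Lipschitz. That the image is causal in $X$ follows again from Proposition \ref{prop: gluing properties}(ii): $\gamma(s) \leq \gamma(t)$ in $X_i$ implies $[\gamma(s)] \tleq [\gamma(t)]$. For the single-crossing case I would form the concatenation $(\pi \circ \gamma_1) \ast (\pi \circ \gamma_2)$, which is well defined because $[a^i] = [a^j] = [a]$, so the two projected curves meet at $[a]$. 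Causality within each piece is as above; for a parameter $s$ in the first piece and $t$ in the second one has $[\gamma_1(s)] \tleq [a] \tleq [\gamma_2(t)]$, and transitivity of $\tleq$ (which holds since $X$ is a Lorentzian pre-length space) gives $[\gamma_1(s)] \tleq [\gamma_2(t)]$. Hence the concatenation is a future-directed causal curve from $[x]$ to $[y]$.

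For the timelike statement I would argue in exactly the same way, replacing $\leq$ by $\ll$ throughout and invoking the timelike half of Lemma \ref{relation reformulation}, together with $x^i \ll y^j \Rightarrow [x] \tll [y]$ from Proposition \ref{prop: gluing properties}(ii) and transitivity of $\tll$. The delicate point — and the step I expect to be the main obstacle — is precisely here: the concatenation of a timelike and a merely causal curve is in general only causal, not timelike, so I cannot afford a ``mixed'' crossing $x^i \ll a^i \sim a^j \leq y^j$. What rescues the argument is that the timelike form of Lemma \ref{relation reformulation} supplies a crossing with \emph{both} segments timelike, i.e.\ $x^i \ll a^i \sim a^j \ll y^j$; this in turn rests on the approximation of $\ttau$ by timelike chains in Proposition \ref{prop: gluing properties}(i) and on $f$ preserving $\ll$ (which follows from $\tau$-preservation). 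With both segments timelike, the cross-piece pairs satisfy $[\gamma_1(s)] \tll [a] \tll [\gamma_2(t)]$, so by transitivity of $\tll$ the concatenation is genuinely timelike, completing the proof.
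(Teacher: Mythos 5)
Your proposal is correct and follows essentially the same route as the paper: reduce via Lemma \ref{relation reformulation} to either a single-space relation or a single crossing through some $[a]\in A$, then project and concatenate curves supplied by causal path-connectedness of the factors, with the timelike case handled by the timelike version of the reformulation (which the paper likewise derives from Proposition \ref{prop: gluing properties}(i)). Your additional checks (local Lipschitzness of $\pi\circ\gamma$ via $\td\leq d$, and the observation that a mixed crossing $x^i\ll a^i\sim a^j\leq y^j$ would not suffice for the timelike case) are correct refinements of details the paper leaves implicit.
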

\begin{proof}
Let $[x],[y] \in X$ with $[x] \tleq [y]$. If $[x],[y] \in X_i$, then $x^i \leq y^i$ and the projection of a causal curve in $X_i$ joining $x^i$ and $y^i$ is a causal curve in $X$ joining $[x]$ and $[y]$. 
Thus, the only case left to check is, up to symmetry, $[x]=\{x^1\}$ and $[y]=\{y^2\}$. 
In this case, $[x] \tleq [y]$ implies that there exists $[a] \in A$ such that $x^1 \leq a^1 \sim a^2 \leq y^2$, cf.\ Lemma \ref{relation reformulation}. Since $X_1$ and $X_2$ are causally path-connected, there exist causal curves from $x^1$ to $a^1$ in $X_1$ and from $a^2$ to $y^2$ in $X_2$, respectively. The concatenation of their projections is a causal curve from $[x]$ to $[y]$ in $X$. \\

Using Proposition \ref{prop: gluing properties}(i), the timelike case is entirely analogous.
\end{proof}

Before dealing with the property of being intrinsic, we state the following lemma.

\begin{lem}[Quotient length of causal curves]
\label{lem: quot length}
Let $X:= \amA$ be the Lorentzian amalgamation of two Lorentzian pre-length spaces $X_1$ and $X_2$.
\begin{itemize}
\item[(i)] Let $\gamma:[a,b] \to X_1$ be a causal curve in $X_1$. Assume that $f$ is not necessarily $\tau$-preserving. Then $L_{\ttau}(\pi \circ \gamma) \geq L_{\tau}(\gamma)$.
If $f$ is $\tau$-preserving, then $L_{\ttau}(\pi \circ \gamma) = L_{\tau}(\gamma)$. The same is true for causal curves in $X_2$.

\item[(ii)] Let $\gamma:[a,c] \to X$ be a causal curve resulting from the concatenation of projections of two causal curves $\alpha:[a,b] \to X_1$ and $\beta:[b,c] \to X_2$, i.e., $\gamma=(\pi \circ \alpha) \ast (\pi \circ \beta)$.
Then $L_{\ttau}(\gamma) = L_{\tau}(\alpha) + L_{\tau}(\beta)$.
\end{itemize}
\end{lem}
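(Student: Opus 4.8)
The plan is to reduce both parts to the defining infimum/supremum characterisations of $L_\tau$ and $\ttau$ together with the reverse triangle inequality, bootstrapping (ii) from (i).

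For the inequality in (i) I would argue directly from the definition of $\ttau$, deliberately avoiding Proposition~\ref{prop: gluing properties}(i), whose simplified form presupposes $\tau$-preservation. Fix a partition $a = t_0 < \dots < t_n = b$. Since $\gamma$ is causal, $\gamma(t_k) \leq \gamma(t_{k+1})$ in $X_1$, so the one-step chain $\gamma(t_k) \leq \gamma(t_{k+1})$ is admissible in the supremum defining $\ttau([\gamma(t_k)],[\gamma(t_{k+1})])$, whence each term dominates $\tau(\gamma(t_k),\gamma(t_{k+1}))$. Summing over the partition and passing to the infimum over partitions yields $L_{\ttau}(\pi\circ\gamma) \geq L_\tau(\gamma)$. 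For the equality under $\tau$-preservation I invoke Proposition~\ref{prop: gluing properties}(i): both endpoints of each subinterval lie in $X_1$, so its first case applies termwise and gives $\ttau([\gamma(t_k)],[\gamma(t_{k+1})]) = \tau(\gamma(t_k),\gamma(t_{k+1}))$. The partition sums then coincide for every partition, so their infima agree. Curves in $X_2$ are handled identically.

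For (ii) the crux is additivity of the $\ttau$-length under concatenation at the breakpoint $b$. I would derive this from the reverse triangle inequality obeyed by $\ttau$ (property (iii) of a Lorentzian pre-length space, applied to $X$): for any partition $P$ of $[a,c]$ the refinement $P \cup \{b\}$ has a partition sum no larger, since inserting $b$ into the subinterval straddling it replaces a single term by two whose sum is at most as large, using $[\gamma(t_k)] \tleq [\gamma(b)] \tleq [\gamma(t_{k+1})]$. Hence the infimum may be taken over partitions containing $b$, and each such partition splits into one of $[a,b]$ and one of $[b,c]$, giving $L_{\ttau}(\gamma) = L_{\ttau}(\gamma|_{[a,b]}) + L_{\ttau}(\gamma|_{[b,c]})$. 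Because the concatenation forces $\alpha(b) \in A_1$ and $\beta(b) \in A_2$ with $f(\alpha(b)) = \beta(b)$, the breakpoint lies in $A$ and each half-curve is the projection of a causal curve into a single original space, namely $\gamma|_{[a,b]} = \pi\circ\alpha$ and $\gamma|_{[b,c]} = \pi\circ\beta$. Applying the equality case of part (i) to each piece turns these into $L_\tau(\alpha)$ and $L_\tau(\beta)$, which is the claim.

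The step I expect to be the only one requiring genuine (if short) argument, and hence the main obstacle, is the additivity-under-concatenation claim: one must check that inserting the breakpoint really only lowers partition sums and that the resulting infimum factorises across the two halves. This is precisely where the reverse triangle inequality does the work; the remaining content of the lemma is a direct unwinding of the definitions, and the care needed elsewhere is merely in recording that in (ii) the breakpoint sits in $A$ so that part (i) is applicable to each half.
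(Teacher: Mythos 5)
Your proposal is correct and follows essentially the same route as the paper: part (i) by termwise comparison of partition sums using $\ttau([x],[y])\geq\tau(x^i,y^j)$ (respectively the simplified form of $\ttau$ under $\tau$-preservation), and part (ii) by splitting the length at the breakpoint in $A$ and applying the equality case of (i) to each half. The only difference is that you prove the additivity of $L_{\ttau}$ under concatenation directly from the reverse triangle inequality, whereas the paper simply cites \cite[Lemma 2.25]{KS18} for that step; your argument is exactly the standard proof of that cited lemma.
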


\begin{proof}
(i) This follows immediately by Proposition \ref{prop: gluing properties}(ii). Let $a=t_0<t_1< \ldots <t_n=b$ be a partition of $[a,b]$.
Then $\sum_{i=1}^{n-1} \tau(\gamma(t_i)^1, \gamma(t_{i+1})^1) \leq \sum_{i=1}^{n-1} \ttau([\gamma(t_i)],[\gamma(t_{i+1})])$. Since the length of a causal curve is defined as the supremum over such expressions, the claim follows. The second claim in (i) follows with the same arguments and Proposition \ref{prop: gluing properties}(i), since in this case we have equality for each partition. \\

(ii) Note that we return here to the familiar setting of a structure preserving identification map $f$. This claim would be wrong in general without this assumption. Via (i) and \cite[Lemma 2.25]{KS18} we compute $L_{\ttau}(\gamma)=L_{\ttau}(\gamma|_{[a,b]}) + L_{\ttau}(\gamma|_{[b,c]}) = L_{\ttau}(\pi \circ \alpha) + L_{\ttau}(\pi \circ \beta) = L_{\tau}(\alpha) + L_{\tau}(\beta)$.
\end{proof}

\begin{prop}[Intrinsicality]
\label{prop: intrinsic}
Let $X_1$ and $X_2$ be two intrinsic Lorentzian pre-length spaces. Then $X:=X_1 \sqcup_A X_2$ is intrinsic.
\end{prop}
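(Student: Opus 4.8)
The plan is to prove the two inequalities $\tTau \le \ttau$ and $\ttau \le \tTau$ pointwise on $X \times X$. The first is essentially free: since $X = \amA$ is itself a Lorentzian pre-length space, property (iii) of that notion (the reverse triangle inequality) gives $L_{\ttau}(\gamma) \le \ttau([x],[y])$ for every causal curve $\gamma$ from $[x]$ to $[y]$, and taking the supremum over all such $\gamma$ yields $\tTau([x],[y]) \le \ttau([x],[y])$. Hence the entire content reduces to showing $\ttau([x],[y]) \le \tTau([x],[y])$, and I only need to treat pairs with $\ttau([x],[y]) > 0$: whenever $\ttau([x],[y]) = 0$, the chain $0 \le \tTau([x],[y]) \le \ttau([x],[y]) = 0$ already forces equality.

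First I would dispose of the case $[x],[y] \in X_i$. Here Proposition~\ref{prop: gluing properties}(i) gives $\ttau([x],[y]) = \tau(x^i,y^i)$, and since $X_i$ is intrinsic there is a sequence of causal curves $\gamma_n$ in $X_i$ from $x^i$ to $y^i$ with $L_{\tau}(\gamma_n) \to \tau(x^i,y^i)$. Projecting each to $X$ and invoking Lemma~\ref{lem: quot length}(i), which in the $\tau$-preserving setting gives $L_{\ttau}(\pi \circ \gamma_n) = L_{\tau}(\gamma_n)$, I obtain causal curves in $X$ from $[x]$ to $[y]$ whose $\ttau$-lengths converge to $\ttau([x],[y])$, so $\tTau([x],[y]) \ge \ttau([x],[y])$.

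The remaining, and main, case is $[x] = \{x^i\}$, $[y] = \{y^j\}$ with $\{i,j\} = \{1,2\}$, where a realizing curve must cross $A$. Assuming $\ttau([x],[y]) > 0$, I would use the explicit form in Proposition~\ref{prop: gluing properties}(i) together with its addendum that the supremum is approximated by \emph{timelike} chains: for every $\varepsilon > 0$ there is $[a] \in J_X([x],[y]) \cap A$ with $x^i \ll a^i$ and $a^j \ll y^j$ such that $\tau(x^i,a^i) + \tau(a^j,y^j) > \ttau([x],[y]) - \varepsilon$. Restricting to timelike chains is exactly what makes both summands strictly positive, so that intrinsicality of $X_i$ and $X_j$ genuinely produces curves and not an empty supremum: I may choose causal curves $\alpha$ in $X_i$ from $x^i$ to $a^i$ and $\beta$ in $X_j$ from $a^j$ to $y^j$ with $L_{\tau}(\alpha) > \tau(x^i,a^i) - \varepsilon$ and $L_{\tau}(\beta) > \tau(a^j,y^j) - \varepsilon$. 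Since $a^i \sim a^j$, the projections meet at $[a]$, so $\gamma := (\pi \circ \alpha) \ast (\pi \circ \beta)$ is a causal curve in $X$ from $[x]$ to $[y]$, and Lemma~\ref{lem: quot length}(ii) gives $L_{\ttau}(\gamma) = L_{\tau}(\alpha) + L_{\tau}(\beta) > \ttau([x],[y]) - 3\varepsilon$. Letting $\varepsilon \to 0$ yields $\tTau([x],[y]) \ge \ttau([x],[y])$.

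I expect the cross-space case to be the main obstacle, precisely in the bookkeeping around the crossing point: one must ensure that near-maximizing curves actually exist on \emph{both} sides, which is exactly why the timelike-chain refinement of Proposition~\ref{prop: gluing properties}(i) is essential, since a vanishing $\tau$-value on one side could otherwise correspond to an empty family of curves and block the concatenation. Once the two curves are in hand, the fact that their projections join into a single admissible causal curve through $[a]$ and that $L_{\ttau}$ is additive across the join (Lemma~\ref{lem: quot length}(ii)) does the remaining work.
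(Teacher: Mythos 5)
Your proposal is correct and follows essentially the same route as the paper's proof: the same reduction to the two cases (both points in one $X_i$ versus a crossing pair), the same use of Proposition~\ref{prop: gluing properties}(i) to pick an almost-maximizing crossing point $[a]$, and the same concatenation argument via Lemma~\ref{lem: quot length}. Your explicit appeal to the timelike-chain refinement, which guarantees that both legs of the chain have positive $\tau$ and hence (by intrinsicality and the convention $\sup\emptyset=0$) actually admit causal curves to concatenate, tidies up a point the paper leaves implicit, but it does not constitute a different approach.
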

\begin{proof}
Let $[x],[y] \in X$. If $\ttau([x],[y])=0$, there is nothing to show, so suppose $\ttau([x],[y])>0$. 
If $[x],[y] \in X_i$, then $\ttau([x],[y])=\tau(x^i,y^i)$. As $X_i$ is intrinsic, we find a causal curve $\gamma$ from $x^i$ to $y^i$ such that $L_{\tau}(\gamma)>\tau(x^i,y^i) - \varepsilon$. 
Then $L_{\ttau}(\pi \circ \gamma) = L_{\tau}(\gamma)>\tau(x^i,y^i) - \varepsilon = \ttau([x],[y]) - \varepsilon$. In other words, if the two points are from the same space, the causality is unaffected by the gluing process and hence a well-approximating causal curve in the original space projects to a well-approximating causal curve in the quotient. \\

Again, the only case left to check is, up to symmetry, $[x]=\{x^1\}$ and $[y]=\{y^2\}$. 
By Proposition \ref{prop: gluing properties}(i), we have $\ttau([x],[y])=\sup\{\tau(x^1,a^1)+\tau(a^2,y^2) \mid [a] \in J_X([x],[y]) \cap A \}$. Let $[a]$ be such that $\tau(x^1,a^1)+\tau(a^2,y^2) > \ttau([x],[y]) - \varepsilon$. 
As $X_1$ and $X_2$ are intrinsic (and in fact, $X_1 \sqcup X_2$ is intrinsic as well), we find causal curves $\gamma_1$ from $x^1$ to $a^1$ in $X_1$ and $\gamma_2$ from $a^2$ to $y^2$ in $X_2$ such that, respectively, $L_{\tau}(\gamma_1) > \tau(x^1,a^1) - \varepsilon$ and $L_{\tau}(\gamma_2) > \tau(a^2,y^2) - \varepsilon$. Denote by $\gamma$ the concatenation of the projections of these curves.
Then by Lemma \ref{lem: quot length}(ii), $L_{\ttau}(\gamma)=L_{\tau}(\gamma_1)+L_{\tau}(\gamma_2) > \tau(x^1,a^1) - \varepsilon + \tau(x^2,a^2) - \varepsilon > \ttau([x],[y]) - 3 \varepsilon$. 
\end{proof}

\begin{prop}[Local causal closedness]
\label{lcc}
Let $X_1$ and $X_2$ be two locally causally closed, strongly causal and locally compact Lorentzian pre-length spaces. Then $X:=\amA$ is locally causally closed.
\end{prop}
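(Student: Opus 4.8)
The plan is to prove local causal closedness pointwise, distinguishing whether the base point lies in the glued locus $A$ or not. For $[x]\in X_i\setminus A_i$ the class is a singleton $\{x^i\}$, and since $A_i$ is closed I can take (using local causal closedness, then local compactness) a causally closed open neighbourhood $C\ni x^i$ in $X_i$ with compact closure $\overline C\subseteq X_i\setminus A_i$. Then $\pi|_{\overline C}$ is injective, $\pi(C)$ is open, $\overline{\pi(C)}\subseteq\pi(\overline C)$, and by Lemma \ref{relation reformulation} the relation $\tleq$ on $\pi(C)$ is literally $\leq$ on $C$ (no chain between two points off $A$ can cross $A$); hence causal closedness transfers verbatim. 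The substantial case is $[x]=\{x^1,x^2\}\in A$, where chains may cross between the two factors.

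For $[x]\in A$ the first task is to construct a neighbourhood $U=\pi(U_1\sqcup U_2)$ that is open, \emph{causally convex in $X$}, relatively compact and causally closed. The key trick is to take the $U_i$ to be timelike diamonds whose defining endpoints lie in $A$. Fix relatively compact, causally convex neighbourhoods $V_i\ni x^i$ (causal convexity from strong causality, as finite intersections of diamonds of the subbasis $\mathcal I$), each contained in a compact causally closed neighbourhood. Since $A_1$ is non-timelike locally isolating, I pick $a_\pm^1\in A_1$ with $a_-^1\ll x^1\ll a_+^1$ inside a neighbourhood of $x^1$ small enough that their $f$-images $a_\pm^2:=f(a_\pm^1)$ lie in $V_2$, and set $U_1:=I_1(a_-^1,a_+^1)$, $U_2:=I_2(a_-^2,a_+^2)$. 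As $f$ preserves $\ll$ on $A$ we get $a_-^2\ll x^2\ll a_+^2$ (so $x^2\in U_2$) and $f(U_1\cap A_1)=U_2\cap A_2$, whence $U$ is open by Lemma \ref{recovering subsets}. Because $a_\pm^i\in V_i$ and $V_i$ is causally convex, $U_i\subseteq J_i(a_-^i,a_+^i)\subseteq V_i$, so each $U_i$ is relatively compact and, being a subset of a causally closed neighbourhood, causally closed. Finally $U$ is causally convex in $X$: for $[p],[q]\in U$ and $[r]\in J_X([p],[q])$, Lemma \ref{relation reformulation} writes $[p]\tleq[r]$ and $[r]\tleq[q]$ as direct relations or single crossings, and preservation of $\leq$ by $f$ converts the crossing points of the two chains into points of a single factor lying between $p$ and $q$ there; causal convexity of the diamonds $U_1,U_2$ then forces $[r]\in U$.

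With such a $U$ I would prove causal closedness directly. Since $\overline{U_1}\sqcup\overline{U_2}$ is compact and $X$ is Hausdorff (it carries the metric $\td$), one has $\overline U=\pi(\overline{U_1}\sqcup\overline{U_2})$, so any limit of a sequence in $U$ has a representative in a compact $\overline{U_i}$. Given $[p_n]\tleq[q_n]$ in $U$ with $[p_n]\to[p]$, $[q_n]\to[q]$ in $\overline U$, I invoke Lemma \ref{relation reformulation} and pass to a subsequence on which every relation has a fixed type: a direct relation in one factor, or a single crossing $p_n^i\leq a_n^i\sim a_n^j\leq q_n^j$. In the direct case, compactness of $\overline{U_i}$ yields convergent representatives and the local causal closedness of $X_i$ on $\overline{U_i}$ gives the limiting relation, which projects to $[p]\tleq[q]$ by Proposition \ref{prop: gluing properties}(ii). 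In the crossing case, causal convexity of $U$ places $[a_n]\in J_X([p_n],[q_n])\subseteq U$, so $a_n^i\in U_i\cap A_i\subseteq\overline{U_i}$; extracting $a_n^i\to a^i$, the limit stays in $A_i$ (closedness of $A_i$) and $a_n^j=f(a_n^i)\to f(a^i)=a^j$ by continuity of $f$. Local causal closedness of $X_i$ and $X_j$ then yields $p^i\leq a^i$ and $a^j\leq q^j$, i.e. the chain $p^i\leq a^i\sim a^j\leq q^j$, so $[p]\tleq[q]$; uniqueness of limits identifies all projected limits with $[p]$ and $[q]$.

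The main obstacle is precisely the crossing case: a priori the mediating point $a_n\in A$ of a limiting causal chain could run off to infinity along $A$, so there is no reason for it to subconverge. The two ingredients that defuse this are relative compactness of the $U_i$, supplied by local compactness, and causal convexity of the glued neighbourhood $U$, which confines $[a_n]$ to $U$. The latter in turn rests on the delicate simultaneous construction of a causally convex, causally closed, relatively compact \emph{and} gluing-compatible neighbourhood, which is exactly why strong causality (for the causally convex diamonds, and for compatibility via endpoints in $A$) is required on top of local causal closedness and local compactness.
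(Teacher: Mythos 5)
Your proof is correct and follows essentially the same route as the paper's: both handle the off-$A$ case by a neighbourhood missing $A$, and in the case $[x]\in A$ both use strong causality together with the non-timelike local isolation of $A_i$ to build a timelike diamond in $X$ whose endpoints lie on $A$, exploit its causal convexity to trap the mediating points $[a_n]$ of crossing chains, use local compactness to extract convergent subsequences of the $a_n^i$, and conclude via causal closedness of the pieces. Your observation that $f(U_1\cap A_1)=U_2\cap A_2$ holds automatically for diamonds with identified endpoints slightly streamlines the paper's construction of compatible neighbourhoods, but this is a presentational difference, not a different argument.
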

\begin{proof}
At first, note that any subset of a causally closed neighbourhood is again causally closed.
Let $[x] \in X \setminus A$. Then there is an open neighbourhood $U_i \subseteq X_i$ of $x^i$ which does not meet $A_i$, and we can choose $U_i$ small enough to be contained in a causally closed neighbourhood of $x^i$ in $X_i$. Then $\pi(U_i)$ is a causally closed neighbourhood of $[x]$ in $X$.
Indeed, $\pi(U_i)$ is open as $\pi^{-1}(\pi(U_i))=U_i$ is open. 
Then for any $[p] \tleq [q]$ in $\pi(U_i)$ it follows that $p^i \leq q^i$. \\

Let $[x] \in A$. Let $V_1$ and $V_2$ be open and causally closed neighbourhoods of $x^1$ and $x^2$ in $X_1$ and $X_2$, respectively. Let $V_1'$ be such that $V_1' \cap A_1 = f^{-1}(V_2 \cap A_2)$ and let $V_2'$ be such that $V_2' \cap A_2=f(V_1 \cap A_1)$. Then $x^1 \in U_1 := V_1 \cap V_1'$ and $x^2 \in U_2 := V_2 \cap V_2'$ are open as well as causally closed and satisfy $f(U_1 \cap A_1)=U_2 \cap A_2$\footnote{We will use this construction of neighbourhoods which agree on $A$ via $f$ and share some property several times throughout this work. We will not give a detailed description in future cases.}. It follows that $U:=\pi(U_1 \sqcup U_2)$ is an open neighbourhood of $[x]$ in $X$, cf.\ Lemma \ref{recovering subsets}.
By strong causality we find points $p_1^1,q_1^1, \ldots, p_n^1,q_n^1$ and $a_1^2,b_1^2, \ldots, a_m^2,b_m^2$ such that $x^1 \in S_1 := \cap_{i=1}^n I_1(p_i^1,q_i^1) \subseteq U_1$ and $x^2 \in S_2 := \cap_{j=1}^mI_2(a_j^2,b_j^2) \subseteq U_2$. Then we find a neighbourhood $O_i$ of $x^i$ which is contained in $S_i$ such that $f(O_1 \cap A_1)=O_2 \cap A_2$. 
As $A_1$ and $A_2$ are non-timelike locally isolating, we find $b_-^i,b_+^i \in O_i \cap A_i$ such that $b_-^i \ll x^i \ll b_+^i, i=1,2$. 
In particular, $x^i \in I_i(b_-^i,b_+^i) \subseteq J_i(b_-^i,b_+^i) \subseteq S_i \subseteq U_i$ since $S_i$ is causally convex.
Additionally, by the local compactness of $X_i$, we can assume that both these timelike diamonds are contained in a compact neighbourhood as well.
We claim that (the closure of) $D:=I_X([b_-],[b_+])=\pi(I_1(b_-^1,b_+^1) \sqcup I_2(b_-^2,b_+^2))=:\pi(D_1 \sqcup D_2)$ is a causally closed neighbourhood of $[x]$ in $X$. \\

Let $([p_n])_{n \in \N}, [p_n])_{n \in \N}$ be two sequences in $D$ converging to $[p]$ and $[q]$, respectively, and suppose $[p_n] \tleq [q_n]$ for all $n \in \N$. 
This means that for all $n$ we either have $p_n^i \leq q_n^i$ or there exists $[a_n] \in A$ such that, say, $p_n^1 \leq a_n^1 \sim a_n^2 \leq q_n^2$.
Recall Remark \ref{rem: convergence in amalg} and suppose first that there exist subsequences such that $p_{n_k}^i \leq q_{n_k}^i$. 
Then $p^i \in [p], q^i \in [q]$ as $X_i$ is closed in $X$. 
Then $p^i \leq q^i$ follows since $\bar{D_i}$ is causally closed.
Otherwise, there exist subsequences that are up to symmetry of the form $p_{n_k}^1 \leq a_n^1 \sim a_{n_k}^2 \leq q_{n_k}^2$. 
By definition, $[a_n] \in J_X([p_n],[q_n]) \subseteq D$. 
In particular, $p_{n_k}^1, a_{n_k}^1 \in D_1$ as well as $a_{n_k}^2, q_{n_k}^2 \in D_2$ for all $k$.
As $D_i$ is relatively compact, we can extract from $(a_{n_k})_{k \in \N}$ a converging subsequence, say without loss of generality the whole sequence already converges to some $a^i \in \bar{D_i}$. Since $\bar{D_i}$ is causally closed, we infer $p^1 \leq a^1$ and $a^2 \leq q^2$. In particular, we have $[p] \tleq [a] \tleq [q]$ and hence $[p] \tleq [q]$, showing that $\bar{D}$ is a causally closed neighbourhood of $[x]$ in $X$.
\end{proof}
\begin{lem}[Localizing neighbourhoods and strong causality]
\label{lem: str caus loc nhood basis}
In a strongly causal, non-timelike locally isolating and localizable Lorentzian pre-length space $X$, each point has a neighbourhood basis of localizable timelike diamonds, hence $X$ is strongly localizable.
\end{lem}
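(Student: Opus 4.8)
The plan is to combine the localizability hypothesis with Lemma \ref{lem: nlti basis} to produce, inside any prescribed neighbourhood, a timelike diamond that inherits the localizing structure. Fix $p \in X$ and an arbitrary neighbourhood $W$ of $p$. By localizability, $p$ admits a localizable neighbourhood $U$. Since $X$ is strongly causal and non-timelike locally isolating, Lemma \ref{lem: nlti basis} guarantees that $\mI$ is a basis for the topology, so there are points $x,y$ with $p \in I(x,y) \subseteq U \cap W$. I set $V := I(x,y)$, an open and causally convex neighbourhood of $p$ contained in both $U$ and $W$. The whole task then reduces to showing that $V$ is itself a localizable neighbourhood; doing this inside every $W$ yields the desired neighbourhood basis and hence strong localizability.

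The key geometric observation is that causal convexity pins causal curves inside $V$. Indeed, if $\gamma$ is a causal curve between two points $q,q' \in V$, then every intermediate point $r=\gamma(t)$ satisfies $q \leq r \leq q'$, so $r \in J(q,q') \subseteq V$ by causal convexity of the diamond; thus $\gamma$ lies entirely in $V$. This makes the restriction $\omega_V := \omega_U|_{V \times V}$ the natural candidate for the localizing function on $V$, and it also shows that maximality in $U$ and maximality in $V$ coincide for curves between points of $V$: any competitor curve in $V$ is a competitor in $U$, while the $\omega_U$-realizing curve already lies in $V$.

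With this in hand I verify the three conditions of Definition \ref{def: loc} for $V$. For (i), any causal curve in $V$ lies in $U$, so the bound $L^d(\gamma) \leq C$ holds with the same constant $C$ supplied by $U$. For (iii), given $q \leq q'$ in $V$, localizability of $U$ provides a maximal causal curve realizing $\omega_U(q,q')$; by the observation above it lies in $V$ and is maximal in $V$, so $\omega_V(q,q')=\omega_U(q,q')$ is realized by a curve in $V$. For (ii), $\omega_V$ is continuous as the restriction of the continuous $\omega_U$ to the open set $V \times V$, the reverse triangle inequality and the equivalence $\omega_V(q,q')>0 \iff q \ll q'$ pass verbatim to the restriction, and the non-triviality condition $I^{\pm}(r) \cap V \neq \emptyset$ for $r \in V$ follows from non-timelike local isolation of $X$: for $r \in V = I(x,y)$ we have $I^{\pm}(r) \neq \emptyset$ (since $x \ll r \ll y$), and as $V$ is an open neighbourhood of $r$ the isolation property furnishes points of $V$ strictly to the future and to the past of $r$.

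The step I expect to be the main obstacle is condition (ii), and specifically its non-triviality clause. This is exactly the place where the non-timelike local isolation hypothesis is indispensable and cannot be omitted: the ``endpoints'' $x,y$ of the diamond need not themselves lie in $V$, so one cannot simply use them to witness $I^{\pm}(r) \cap V \neq \emptyset$, and without local isolation a point $r$ near the boundary of $V$ could fail to have timelike neighbours inside $V$. Everything else is a routine transfer of the already-available localizing data on $U$ to the causally convex subset $V$.
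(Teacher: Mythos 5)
Your argument is correct and follows essentially the same route as the paper: take a localizable neighbourhood $U$, use Lemma \ref{lem: nlti basis} to place a timelike diamond $I(x,y)$ around the point inside $U$ (and the prescribed neighbourhood), and exploit causal convexity to show that causal curves between points of the diamond stay in the diamond, so the restriction of $\omega_U$ serves as the localizing function and all three conditions of Definition \ref{def: loc} transfer. Your handling of the non-triviality clause — observing that $I^{\pm}(r)\neq\emptyset$ is automatic for $r\in I(x,y)$ and then invoking non-timelike local isolation with $V$ as the neighbourhood — matches the paper's argument exactly.
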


\begin{proof}
Let $U$ be a localizing neighbourhood of $x \in X$.  
By strong causality and non-timelike local isolation we find, as in Lemma \ref{lem: nlti basis}, $p,q \in U$ such that $x \in I(p,q) \subseteq U$ (note that $X$ has no $\ll$-isolated points as it is localizable).
By the causal convexity of $I(p,q)$, every causal curve with endpoints in $I(p,q)$ is entirely contained in $I(p,q)$ and hence also in the original neighbourhood $U$. Thus, we can take the same d-compatibility constant $C>0$ for $I(p,q)$ that we used for $U$.
If $y \in I(p,q)$, then as $I(p,q)$ is a neighbourhood of $y$ and $X$ is non-timelike locally isolating, we find $y_-,y_+ \in I(p,q)$ such that $y_- \ll y \ll y_+$, which immediately yields $I^{\pm}(y) \cap I(p,q) \neq \emptyset$, so the non-triviality condition is satisfied. 
Finally, recall that $\omega_U(x,y)=\max\{L(\gamma) \mid \gamma \text{ is a causal curve from $x$ to $y$ in $U$} \}$. 
We claim that $\omega_U|_{I(p,q) \times I(p,q)} = \omega_{I(p,q)}$, i.e., the maximal curve between points in $I(p,q)$ that is contained in $U$ is already contained in $I(p,q)$. This follows immediately by the causal convexity of $I(p,q)$. \\

Timelike diamonds form a basis for the topology, cf.\ Lemma \ref{lem: nlti basis}, so $X$ is strongly localizable.
\end{proof}

Via Lemma \ref{lem: str caus loc nhood basis}, we obtain the following very slight generalization of \cite[Lemma 4.3]{GKS19}.

\begin{lem}[$\tau$ determines $\omega$]
\label{lem: tau and omega}
Let $X$ be a strongly causal, non-timelike locally isolating, intrinsic and localizable Lorentzian pre-length space. Then each point in $X$ has a neighbourhood basis of localizable neighbourhoods where $\omega$ agrees with $\tau$. In other words, $X$ is locally geodesic.
\end{lem}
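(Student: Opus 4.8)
The plan is to combine Lemma~\ref{lem: str caus loc nhood basis} with intrinsicality and causal convexity. By Lemma~\ref{lem: str caus loc nhood basis}, every point of $X$ has a neighbourhood basis consisting of localizable timelike diamonds, so it suffices to fix such a diamond $U = I(p,q)$ and to show that $\omega_U$ agrees with $\tau|_{U \times U}$; these diamonds then constitute the desired neighbourhood basis of localizable neighbourhoods on which $\omega = \tau$.

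First I would record the trivial pairs. If $y,z \in U$ with $y \not\leq z$, then no causal curve joins $y$ to $z$, whence $\omega_U(y,z) = 0$; and since $\tau(y,z) > 0$ forces $y \ll z$, hence $y \leq z$ as ${\ll} \subseteq {\leq}$, we also get $\tau(y,z) = 0$. Thus only the case $y \leq z$ carries content. Here $\omega_U(y,z) = \max\{L_\tau(\gamma) \mid \gamma \text{ causal from } y \text{ to } z \text{ in } U\}$, while intrinsicality gives $\tau(y,z) = \Tau(y,z) = \sup\{L_\tau(\gamma) \mid \gamma \text{ causal from } y \text{ to } z \text{ in } X\}$. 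The inequality $\omega_U(y,z) \leq \tau(y,z)$ is immediate, as every causal curve contained in $U$ is in particular a causal curve in $X$. For the reverse inequality I would use that $U = I(p,q)$ is causally convex: any future-directed causal curve $\gamma$ in $X$ with endpoints $y,z \in U$ satisfies $y \leq \gamma(t) \leq z$, hence $\gamma(t) \in J(y,z) \subseteq U$ for all $t$, so $\gamma$ stays inside $U$ (this is exactly the confinement property already exploited in the proof of Lemma~\ref{lem: str caus loc nhood basis}). Therefore the two supremum sets coincide and $\tau(y,z) = \omega_U(y,z)$.

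Finally, the phrase \emph{locally geodesic} follows at once: for $y \leq z$ in $U$, localizability property~(iii) supplies a causal curve $\gamma$ from $y$ to $z$ contained in $U$ with $L_\tau(\gamma) = \omega_U(y,z) = \tau(y,z)$, so $\gamma$ is $\tau$-realizing, i.e.\ a geodesic. I expect the only genuinely substantive step to be the reverse inequality, and the point to highlight there is conceptual rather than computational: one always has $\Tau \leq \tau$ by super-additivity of $\tau$, so it is precisely intrinsicality that turns $\tau$ into a supremum of curve lengths, while causal convexity is what confines those curves to $U$ and thereby lets the global supremum defining $\tau$ be replaced by the local maximum defining $\omega_U$. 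Everything else is bookkeeping with the definitions.
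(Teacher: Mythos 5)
Your argument is correct and is essentially the proof the paper has in mind: the paper simply invokes Lemma~\ref{lem: str caus loc nhood basis} for the causally convex localizable basis and defers the rest to \cite[Lemma 4.3]{GKS19}, whose content is exactly your two steps --- causal convexity of $I(p,q)$ confines every causal curve between its points to $I(p,q)$, so the local maximum defining $\omega$ coincides with the global supremum defining $\Tau$, which equals $\tau$ by intrinsicality. Your write-up just makes the cited argument explicit, so there is nothing to correct.
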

\begin{proof}
Lemma \ref{lem: str caus loc nhood basis} establishes the existence of a causally convex and localizable neighbourhood basis. 
Being intrinsic is the only additional property of a strongly causal and localizable Lorentzian pre-length space that is required in the proof of \cite[Lemma 4.3]{GKS19}.
\end{proof}

\begin{prop}[Localizability]
\label{prop: loc}
Let $X_1$ and $X_2$ be two strongly causal, causally path-connected, locally compact, locally causally closed, intrinsic and localizable Lorentzian pre-length spaces. Then $X:= \amA$ is localizable.
\end{prop}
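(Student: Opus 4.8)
The plan is to verify the three conditions of Definition \ref{def: loc} at every point of $X$, splitting into the trivial case $[x] \notin A$ and the genuine case $[x] \in A$. First I would record that each $X_i$ is \emph{locally geodesic}: being causally path-connected and localizable, $X_i$ has no $\ll$-isolated points and is therefore non-timelike locally isolating, so Lemmas \ref{lem: str caus loc nhood basis} and \ref{lem: tau and omega} apply and give $x^i$ a neighbourhood basis of localizable timelike diamonds on which $\omega$ agrees with $\tau$; local compactness lets me take these with compact closure. For $[x] \notin A$ I choose such a diamond $D_i \subseteq X_i \setminus A_i$ around $x^i$; then $\pi(D_i)$ is open in $X$ (since $\pi^{-1}(\pi(D_i)) = D_i$), and because points of $D_i \setminus A_i$ form singleton classes with no chain able to leave $X_i$ without meeting $A_i$ (Lemma \ref{relation reformulation}), the quotient structure on $\pi(D_i)$ is literally the pushforward of that on $D_i$, so all three conditions transfer verbatim with $\omega_{\pi(D_i)} := \omega_{D_i} \circ \pi^{-1}$.

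For $[x] \in A$ I would build a glued diamond. As in Proposition \ref{lcc}, non-timelike local isolation of $A_i$ lets me pick $b_\pm^i \in A_i$ with $b_-^i \ll x^i \ll b_+^i$ such that $D_i := I_i(b_-^i,b_+^i)$ is a localizable diamond with $\omega_{D_i} = \tau$ and $\overline{D_i}$ compact, and I arrange $[b_-],[b_+] \in A$, i.e.\ $b_\pm^2 = f(b_\pm^1)$. Since $f$ preserves $\ll$ on $A$, causal compatibility gives $f(D_1 \cap A_1) = D_2 \cap A_2$, and Proposition \ref{causal rep}(i) together with Lemma \ref{recovering subsets} yields $D := I_X([b_-],[b_+]) = \pi(D_1 \sqcup D_2)$, an open, causally convex neighbourhood of $[x]$ which I will show is localizable.

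The hard part will be condition (i), the uniform $d$-length bound, because a causal curve $\gamma$ in $D$ may cross the interface $A$ infinitely often, whereas the per-side constants $C_i$ of $D_i$ bound only single causal arcs. The key observation is that the crossing points are causally ordered along $\gamma$, and by causal compatibility two successive returns of $\gamma$ to $A$ that bound an excursion into $D_2$ produce points $a,a' \in A$ with $a^1 \leq a'^1$ in $X_1$; since $D_1$ is causally convex and $X_1$ is causally path-connected, I can bridge the corresponding gap by a causal curve inside $D_1$, thereby reassembling all $D_1$-arcs of $\gamma$ into a single causal curve in $D_1$ whose $d$-length is at most $C_1$. As $\td \leq d$, the total $\td$-length contributed by the $D_1$-part of $\gamma$ is then $\leq C_1$, and symmetrically $\leq C_2$ for the $D_2$-part, giving $L^d(\gamma) \leq C_1 + C_2$.

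Finally, for conditions (ii) and (iii) I would set $\omega_D := \ttau|_{D \times D}$, whose explicit form is given by Proposition \ref{prop: gluing properties}(i). The crucial gain from arranging $\omega_{D_i} = \tau$ is that $\tau$ is then \emph{continuous} on each $D_i$; hence for opposite-side pairs the functional $[a] \mapsto \tau(p^1,a^1) + \tau(a^2,q^2)$ is continuous on the compact set $A \cap \overline{D}$, so its supremum is attained and depends continuously on $([p],[q])$. This yields both finiteness and continuity of $\omega_D$ and turns the supremum of the decomposition into a genuine maximum; that $(D,\td,\tll,\tleq,\omega_D)$ is a Lorentzian pre-length space is inherited from $\ttau$, and non-triviality $I^\pm([p]) \cap D \neq \emptyset$ follows from non-timelike local isolation inside the diamond. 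For the maximal curves of condition (iii): same-side pairs reduce by Proposition \ref{prop: gluing properties}(i) to $\ttau = \tau(p^i,q^i)$, so I project a maximal curve of $D_i$ and apply Lemma \ref{lem: quot length}(i); opposite-side pairs are handled by concatenating the projections of maximal curves from $p^1$ to $a^{*1}$ and from $a^{*2}$ to $q^2$ through a maximizing crossing point $[a^*]$, with Lemma \ref{lem: quot length}(ii) confirming the length equals $\omega_D([p],[q])$. I expect the attainment and the continuity just described — exactly where local compactness and the identity $\omega_{D_i} = \tau$ are indispensable — to be the last delicate point.
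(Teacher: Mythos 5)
Your overall architecture matches the paper's proof closely: the off-$A$ case via a diamond avoiding $A_i$, the glued diamond $\pi(D_1\sqcup D_2)$ around points of $A$, the bridging/reassembly argument (using causal convexity and causal path-connectedness) for the uniform $d$-length bound, and the jump-point decomposition of $\ttau$ from Proposition \ref{prop: gluing properties}(i) for producing maximal curves. Where you genuinely diverge is in establishing attainment and continuity of $\omega_D$ for opposite-side pairs: the paper takes a sequence of near-maximal concatenated curves, extracts a convergent subsequence of jump points via local compactness and causal closedness, and then verifies continuity by an explicit two-sided sequence argument (including the delicate boundary case $[z_k]=\{z_k^2\}\to[z]\in A$ approached from one side, which uses lower semi-continuity of $\ttau$ for the inequality from below); you instead maximize the continuous functional $[a]\mapsto\tau(p^1,a^1)+\tau(a^2,q^2)$ over a fixed compact crossing set, which is cleaner and packages attainment and continuity in one stroke. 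Two cautions on your version. First, the supremum in Proposition \ref{prop: gluing properties}(i) runs over $J_X([p],[q])\cap A$, not over all of $A\cap\overline{D}$: enlarging the constraint set is not innocent, since for $[a]$ with, say, $p^1\ll a^1$ but $a^2\not\leq q^2$ the sum can be strictly positive (one term large, the other zero) without $[a]$ being an admissible jump point, so your sup could overestimate $\ttau([p],[q])$. You must either restrict to $J_X([p],[q])\cap A$ (and then argue this set is compact and that it varies suitably with $([p],[q])$, which is where causal closedness re-enters, exactly as in the paper) or replace the functional by one that vanishes off the admissible set. Second, continuity of the resulting maximum still has to be checked across the seam, i.e.\ when one endpoint sits in $A$ and is approached by points from the other side; your "depends continuously" assertion glosses over the fact that the formula for $\tilde\omega$ changes form there, and the paper spends a separate paragraph reconciling the two expressions via the reverse triangle inequality and lower semi-continuity. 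With those repairs your route goes through and is arguably more economical than the paper's curve-sequence argument.
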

\begin{proof}
Let $[x] \in X \setminus A$. Then we find a timelike diamond $I_i(p^i,q^i)$ in $X_i$ which is a localizing neighbourhood of $x^i$ and does not meet $A_i$. Then $I_X([p],[q])=\pi(I_i(p^i,q^i))$ is a localizable neighbourhood of $[x]$. \\

Let $[x] \in A$. Similar to the construction in Proposition \ref{lcc}, we find a timelike diamond $I_X([p],[q])=\pi(I_1(p^1,q^1) \sqcup I_2(p^2,q^2))$ such that both original diamonds are localizing neighbourhoods in $X_1$ and $X_2$, respectively. We claim that $I_X([p],[q])$ is a localizing neighbourhood of $[x]$ in $X$.
Let $\gamma: [a,b] \to X$ be a causal curve in $I_X([p],[q])$. Then $\pi^{-1}(\gamma([a,b])$ is either a causal curve in $X_i$ or it consists of pieces of causal curves in $X_1$ and $X_2$. In the first case one can take the original constant $C_i >0$ for $\gamma$ by Proposition \ref{prop: gluing properties}(ii), so assume we are in the second case. 
Say $\gamma$ starts out in in $X_1$, leaves at $[y]=\gamma(s) \in A$ and enters again at $[z]=\gamma(t) \in A$. Then $\pi^{-1}(\gamma|_{[a,s]})$ is a causal curve in $X_1$ ending in $y^1$ and $y^1 \leq z^1$. 
As $X_1$ is causally path-connected, there is a causal curve between $y^1$ and $z^1$, any of which by definition is contained in $J_1(y^1,z^1) \subseteq I_1(p^1,q^1)$. Denote by $\alpha$ such a curve. 
Then the concatenation of $\pi^{-1}(\gamma|_{[a,s]})$ and $\alpha$ is a causal curve in $X_1$. 
In this way, we can piece together all parts of $\gamma$ that lie in $X_1$ to obtain one single causal curve, denoted by $\gamma_1$, which is still contained in $I_1(p^1,q^1)$. Doing the same procedure in $X_2$ and denoting the corresponding curve by $\gamma_2$, we infer that 
$L_{\td}(\gamma) \leq L_{\td}(\pi \circ \gamma_1) + L_{\td}(\pi \circ \gamma_2) \leq L_d(\gamma_1) + L_d(\gamma_2) \leq C_1 + C_2$. \\

Concerning the non-triviality condition, let $[y] \in I_X([p],[q])$.
Then $[p] \tll [y] \tll [q]$ and a timelike curve through $[y]$ must lie partially in this neighbourhood, hence $I_X^{\pm}([y]) \cap I_X([p],[q]) \neq \emptyset$. \\

Finally, we need to show the existence of (continuously varying) maximal causal curves in $I_X([p],[q])$. We essentially need to show that the map $\tilde{\omega}:I_X([p],[q]) \times I_X([p],[q]) \to [0,\infty]$, defined via
\begin{equation}
\tilde{\omega}([y],[z]):=\sup\{L_{\ttau}(\gamma) \mid \gamma \text{ is a causal curve from $[y]$ to $[z]$ contained in $I_X([p],[q])$}\},
\end{equation}
is continuous and always realizes the supremum. Note that $\ttau$ is intrinsic by Proposition \ref{prop: intrinsic}, and $\ttau = \tilde{\omega}$ on $I_X([p],[q])$ since the neigbourhood is causally convex.
Let $[y],[z] \in I_X([p],[q])$ with $[y] \tl [z]$. If $\ttau([y],[z])=0$, then either $\tau(y^i,z^i)=0$ or for all $[a] \in J_X([y],[z]) \cap A$ we have $\tau(y^1,a^1)=\tau(a^2,z^2)=0$. In both cases, we find a null curve between the points in $X$, which by definition is contained in $J_X([y],[z]) \subseteq I_X([p],[q])$. In the first case this is immediate with the projection and in the second case we concatenate two null curves which exist by the causal path-connectedness of $X_i$.
So we are left with the case $[y] \tll [z]$. 
If $[y], [z] \in X_i$, then by the $\tau$-preservation of $f:A_1 \to A_2$ and the fact that $\ttau$ is intrinsic, a maximal curve in $I_i(p^i,q^i)$ is still maximal in $I_X([p],[q])$. 
Indeed, $\tilde{\omega}([y],[z])=\ttau([y],[z])=\tau(y^i,z^i)=\omega_i(y^i,z^i)$, cf.\ Lemma \ref{lem: tau and omega}, where $\omega_i : I_i(p^i,q^i) \times I_i(p^i,q^i) \to [0,\infty)$ is the continuous map returning the maximal length of causal curves contained in $I_i(p^i,q^i)$.
So suppose without loss of generality $[y]=\{ y^1 \}, [z] = \{ z^2\}$.
Let $(\gamma_n)_{n \in \N}$ be a sequence of causal curves from $[y]$ to $[z]$ in $I_X([p],[q])$ such that their lengths converge to the supremum. 
By Proposition \ref{prop: gluing properties}(i), each of these causal curves can be assumed to consist of (the projections of) two maximal causal curves from $y^1$ to $a_n^1$ in $I_1(p^1,q^1)$ and from $a_n^2$ to $z^2$ in $I_2(p^2,q^2)$, respectively, for some $[a_n] \in A \cap J_X([y],[z]) \subseteq I_X([p],[q])$. 
This restriction does not decrease the length of curves and we are considering a sequence whose lengths converge to the supremum.
By the local compactness of $X_i$ we can assume that the neighbourhoods were chosen small enough such that even $J_i(p^i,q^i)$ is contained in a compact subset. 
Thus, we infer the existence of a converging subsequence of $(a_n^i)_{n \in \N}$, say without loss of generality $a_n^i \to a^i \in A_i \cap \widebar{J_i(p^i,q^i)}$.
By construction we have $x^1 \leq a_n^1 \sim a_n^2 \leq y^2$ for all $n$.
Since $J_i(p^i,q^i)$ can be assumed to be causally closed as well (just shrink everything), we infer $x^1 \leq a^1 \sim a^2 \leq y^2$ and hence $[a] \in A \cap J_X([y],[z]) \subseteq I_X([p],[q])$.
By hypothesis, we find maximal causal curves from $x^1$ to $a^1$ in $J_1(p^1,q^1)$ and from $a^2$ to $y^2$ in $J_2(p^2,q^2)$.
Denote the (projections of the) two pieces by $\gamma^1$ and $\gamma^2$, respectively, and their concatenation by $\gamma$. This is a maximal causal curve from $[y]$ to $[z]$ in $I_X([p],[q])$: using Lemma \ref{lem: quot length}, 
we clearly have $L_{\ttau}(\gamma_n)=L_{\tau}(\gamma_n^1) + L_{\tau}(\gamma_n^2)$, where $\gamma_n^1$ and $\gamma_n^2$ are the (maximal) parts of $\gamma_n$ in $I_1(p^1,q^1)$ and $I_2(p^2,q^2)$, respectively. Then we compute 
\begin{align*}
\tilde{\omega}([y],[z]) & = \lim_{n \to \infty} L_{\ttau}(\gamma_n) = 
\lim_{n \to \infty} (L_{\tau}(\gamma_n^1) + L_{\tau}(\gamma_n^2)) \\
& = \lim_{n \to \infty} L_{\tau}(\gamma_n^1) + \lim_{n \to \infty} L_{\tau} (\gamma_n^2) = 
\lim_{n \to \infty} \omega_1(y^1,a_n^1) + \lim_{n \to \infty} \omega(a_n^2,z^2) \\
& = \omega_1(y^1,a^1) + \omega_2(a^2,z^2) = 
L_{\tau}(\gamma^1) + L_{\tau}(\gamma^2) = 
L_{\ttau}(\gamma).
\end{align*}
Let $[y_m] \to [y]$ be a sequence, then $[y_m]=\{y_m^1\}$ for large $m$. This yields a sequence of jump points $[b_m]$ and corresponding maximal causal curves $\beta_m$ from $[y_m]$ to $[z]$. In particular, $([b_m])_{m \in \N}$ converges as well and so we obtain a causal curve $\beta$ from $[y]$ to $[z]$ through, say, $[b]$. By the above calculation we know that $\gamma$ is a maximal causal curve from $[y]$ to $[z]$, so we have $\lim_{m \to \infty} \tilde{\omega}([y_m],[z]) = L_{\ttau}(\beta) \leq L_{\ttau}(\gamma) = \tilde{\omega}([y],[z])$. 
Conversely, recall that $L_{\ttau}(\gamma)=L_{\tau}(\gamma_1) + L_{\tau}(\gamma_2)$. Say $\gamma_1$ is parameterized by $[0,1]$ and consider the map $t \mapsto L_{\tau}(\gamma_1|_{[t,1]})$. 
This map is continuous by \cite[Lemma 3.33]{KS18} (local continuity of $\tau$ is clearly sufficient in the proof, i.e., the curve is contained in a neighbourhood where $\tau$ is continuous). For appropriately small $\varepsilon>0$, let $t$ be such that $L_{\tau}(\gamma_1|_{[t,1]}) = L_{\tau}(\gamma_1) - \varepsilon$. 
As $y^1 \ll \gamma_1(t)^1$, we have $y_m^1 \ll \gamma_1(t)^1$ by the openness of $\ll$. 
Since $X_1$ is causally path-connected, we find a causal curve from $y_m^1$ to $\gamma_1(t)^1$, denote it by $\alpha_m$. Then we compute
\begin{align*}
\lim_{m \to \infty} \tilde{\omega}([y_m],[z]) & = \lim_{m \to \infty} L_{\ttau}(\beta_m) \geq 
\lim_{m \to \infty} L_{\ttau}((\pi \circ (\alpha_m \ast \gamma_1|_{[t,1]})) \ast (\pi \circ \gamma_2)) \\ 
& \geq L_{\ttau}(\gamma) - \varepsilon = 
\tilde{\omega}([y],[z]) - \varepsilon.
\end{align*}
This establishes that $\tilde{\omega}$ is continuous on $(X_1\setminus A_1 \times X_2 \setminus A_2) \cup (X_2 \setminus A_2 \times X_1 \setminus A_1)$. We also know that $\tilde{\omega}$ is continuous on $(X_1 \times X_1) \cup (X_2 \times X_2)$, which together cover $X \times X$. The only thing left to check is that the two cases fit together in the following scenario: let $[y] = \{y^1\}, [z] \in A$ and let $[z_k] \to [z]$ be a sequence such that $[z_k]=\{z_k^2\}$ for all $k$. We have to show $\tilde{\omega}([y],[z_k]) \to \tilde{\omega}([y],[z])$. 
By above calculations, we have $\tilde{\omega}([y],[z_k])=\tau(y^1,c_k^1) + \tau(c_k^2,z^2)$ and $\tilde{\omega}([y],[z])=\tau(y^1,z^1)$. The convergence of $([c_k])_{k \in \N}$ gives a limit jump point $[c]$ for $[z]$. On the one hand, we have 
$\lim_k \tilde{\omega}([y],[z_k])= \lim_k \tau(y^1,c_k^1) + \tau(c_k^2,z_k^2) = \tau(y^1,c^1) + \tau(c^2,z^2)=\tau(y^1,c^1)+\tau(c^1,z^1) \leq \tau(y^1,z^1)=\ttau([y],[z])=\tilde{\omega}([y],[z])$ by the maximality of $\tilde{\omega}$ or by the reverse triangle inequality for $\tau$. 
On the other hand, we know $\ttau$ is lower semi-continuous and $\ttau=\tilde{\omega}$, so
$\lim_k \tilde{\omega}([y],[z_k]) \geq \tilde{\omega}([y],[z])$. This establishes that $\tilde{\omega}$ is continuous and hence $I_X([p],[q])$ is a localizable neighbourhood of $[x]$.
\end{proof}

Collecting the previous propositions, we obtain the following theorem:

\begin{thm}[Amalgamation of length spaces]
Let $X_1$ and $X_2$ be two strongly causal and locally compact Lorentzian length spaces. Then $X:= \amA$ is a (strongly localizable) Lorentzian length space.
\end{thm}

\begin{proof}
All assumptions in Propositions \ref{prop: caus pc}, \ref{prop: intrinsic}, \ref{lcc} and \ref{prop: loc} are satisfied, so $X$ is a Lorentzian length space. 
The claim on strong localizability follows by Corollary \ref{llsbasis} and Lemma \ref{lem: str caus loc nhood basis}\footnote{Technically, for the strong localizability one needs to either slightly adapt the construction of neighbourhoods in Proposition \ref{lcc} or apply Proposition \ref{strong causality}.}.
\end{proof}

\section{The causal inheritance}
In this chapter, we take a look at the compatibility of the causal ladder with Lorentzian amalgamation. We consider various steps of the causal ladder in separate statements and then sum everything up in a final theorem. As it turns out, many causality conditions are able to be inherited by Lorentzian amalgamation if one is willing to admit some additional properties.
\begin{prop}[Chronology and causality]
\label{prop: chr and caus}
Let $X_1$ and $X_2$ be two Lorentzian pre-length spaces and $X:=\amA$ their amalgamation.
\begin{itemize}
\item[(i)] If $X_1$ and $X_2$ are chronological, then so is $X$.
\item[(ii)] If $X_1$ and $X_2$ are causal, then so is $X$.
\end{itemize}
\end{prop}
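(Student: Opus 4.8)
The plan is to reduce both statements to irreflexivity of $\ll$ (resp.\ antisymmetry of $\leq$) in the original spaces by means of the chain reformulation in Lemma \ref{relation reformulation}, systematically exploiting that the identifying map $f$ is $\leq$- and $\tau$-preserving to transfer relations across the gluing locus $A$. For (i), I would argue by contradiction: suppose $[x] \tll [x]$. By the timelike case of Lemma \ref{relation reformulation}, either $x^i \ll x^i$ for some representative $x^i$, or there is $[a] \in A$ with $x^i \ll a^i \sim a^j \ll x^j$ and $\{i,j\}=\{1,2\}$. The first alternative directly contradicts chronology of $X_i$. The second forces $[x]\in A$, so both $x^i,x^j$ are defined; since $a,x\in A$ and $f$ preserves $\ll$, the relation $a^j \ll x^j$ transfers to $a^i \ll x^i$, and composing with $x^i \ll a^i$ gives $x^i \ll x^i$ by transitivity, again contradicting chronology of $X_i$.

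For (ii), I assume $[x]\tleq[y]$ and $[y]\tleq[x]$ and aim to conclude $[x]=[y]$. The crux is an auxiliary reduction I would isolate first: \emph{if $[x]\in A$ and $y^i$ is any representative of $[y]$ in $X_i$, then $[x]\tleq[y]\iff x^i\leq y^i$}, and symmetrically. The nontrivial direction uses Lemma \ref{relation reformulation}: a same-space chain already yields $x^i\leq y^i$, whereas a one-step crossing chain $x^k\leq a^k\sim a^l\leq y^l$ collapses because $a,x\in A$ (resp.\ $a,y\in A$) allow one to transfer the relevant $\leq$-relation through $f$, so that $x^i\leq a^i$ and $a^i\leq y^i$ compose to $x^i\leq y^i$.

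Granting this reduction, the case analysis is short. If at least one endpoint lies in $A$, say $[x]\in A$, I choose a space $X_i$ in which $[y]$ has a representative $y^i$; then $x^i\leq y^i$ and $y^i\leq x^i$, so causality of $X_i$ gives $x^i=y^i$ and hence $[x]=[y]$. If neither endpoint lies in $A$ but both sit in the same space, no crossing is possible and both relations reduce directly to that space, where antisymmetry finishes it. The only remaining configuration is $[x]=\{x^1\}$ with $x^1\notin A_1$ and $[y]=\{y^2\}$ with $y^2\notin A_2$ (or its mirror): here both relations must cross, producing $[a],[b]\in A$ with $b^1\leq x^1\leq a^1$ and $a^2\leq y^2\leq b^2$. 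Transferring $a^2\leq b^2$ through $f$ to $a^1\leq b^1$ and combining with $b^1\leq a^1$ forces $a^1=b^1$ by causality of $X_1$, whence $x^1=a^1\in A_1$, contradicting $x^1\notin A_1$; thus this configuration cannot occur and antisymmetry holds vacuously.

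I expect the main obstacle to be the bookkeeping in (ii): one must correctly track which representatives each of $[x]$ and $[y]$ actually possesses and verify that every admissible crossing chain genuinely collapses, since Lemma \ref{relation reformulation} only guarantees a \emph{single} intermediate point in $A$, and one has to be careful to transfer the relation into the correct original space before invoking antisymmetry there. The $\leq$-preservation of $f$ is exactly the hypothesis that makes these transfers legitimate, so keeping that invocation explicit in each subcase is the delicate part.
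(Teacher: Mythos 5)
Your proposal is correct and follows essentially the same route as the paper: both parts reduce to the original spaces via the chain reformulation of Lemma \ref{relation reformulation}, use the $\leq$- (and $\ll$-) preservation of $f$ to collapse any crossing through $A$, and in the genuinely mixed case of (ii) derive $a^1=b^1$ and hence a contradiction exactly as in the paper's proof. The only cosmetic difference is in (i), where the paper invokes the simplified formula $\ttau([x],[x])=\tau(x^i,x^i)=0$ from Proposition \ref{prop: gluing properties}(i) directly instead of the chain argument; your more explicit case bookkeeping in (ii) is also sound.
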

\begin{proof}
(i) This is immediate from Proposition \ref{prop: gluing properties}(i), as for any $[x] \in X_i$ we have $\ttau([x],[x])=\tau(x^i,x^i)=0$ since $X_i$ is chronological. \\

(ii) If $[x]$ and $[y]$ are from the same space, say $X_1$, then this follows from the causality of $X_1$. So assume $[x] \in X_1 \setminus A_1, [y] \in X_2 \setminus A_2$ with $[x] \tleq [y]$ and $[y] \tleq [x]$. Then we find $[a],[b] \in A$ such that 
\begin{equation}
\label{causal}
x^1 \leq a^1 \sim a^2 \leq y^2 \text{ and } y^2 \leq b^2 \sim  b^1 \leq x^1.
\end{equation}
By the transitivity of $\leq$ it follows that $b^1 \leq a^1$ and $a^2 \leq b^2$. 
Since $X_1$ and $X_2$ are causal and $f$ is $\leq$-preserving, we obtain $a^1=b^1$ and $a^2=b^2$, i.e., $[a]=[b]$. 
Then (\ref{causal}) implies $[x]=[a]=[b]=[y]$, a contradiction.
\end{proof}
\begin{prop}[Strong causality]
\label{strong causality}
Let $X_1$ and $X_2$ be two strongly causal Lorentzian pre-length spaces. Then $X:= X_1 \sqcup_A X_2$ is strongly causal.
\end{prop}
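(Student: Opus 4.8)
The plan is to verify the subbasis definition of strong causality directly: given $[p]\in X$ and a $\td$-neighbourhood $U$ of $[p]$, I want to produce finitely many timelike diamonds $I_X([x_k],[y_k])$ with $[p]\in\bigcap_{k} I_X([x_k],[y_k])\subseteq U$. I would split into the cases $[p]\notin A$ and $[p]\in A$, since Proposition~\ref{causal rep} gives genuinely different descriptions of $I_X^{\pm}([p])$ in the two situations.

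For $[p]\notin A$, say $[p]=\{p^1\}$, I would first use that $A_1$ is closed to choose an open $U_1\ni p^1$ in $X_1$ with $\overline{U_1}\cap A_1=\emptyset$ and $\pi(U_1)\subseteq U$ (possible since then $\pi^{-1}(\pi(U_1))=U_1$, cf.\ Remark~\ref{rem: convergence in amalg}). Strong causality of $X_1$ then yields points with $p^1\in W_1:=\bigcap_{k=1}^n I_1(x_k^1,y_k^1)\subseteq U_1$, and $W_1$ is causally convex as a finite intersection of timelike diamonds. I claim $\bigcap_k I_X([x_k],[y_k])=\pi(W_1)$. The inclusion ``$\supseteq$'' is immediate from Proposition~\ref{prop: gluing properties}(ii). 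For ``$\subseteq$'', take $[z]$ in the left-hand side. If $[z]$ lies in $X_1$, then by the same-space reduction in Lemma~\ref{relation reformulation} each relation $[x_k]\tll[z]\tll[y_k]$ collapses to $x_k^1\ll z^1\ll y_k^1$, so $z^1\in W_1$ and $[z]\in\pi(W_1)$.

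The remaining possibility, $[z]=\{z^2\}\in X_2$, is the genuine difficulty and the main obstacle of the whole proof. Here Lemma~\ref{relation reformulation} forces, for every $k$, points $a_k,b_k\in A$ with $x_k^1\ll a_k^1\sim a_k^2\ll z^2\ll b_k^2\sim b_k^1\ll y_k^1$; since $f$ preserves $\ll$ this produces timelike chains $x_k^1\ll a_k^1\ll b_k^1\ll y_k^1$ through two points of $A_1$, and across indices $a_k^1\ll b_l^1$ for all $k,l$. The goal is to convert this ``leak'' into a point of $A_1$ lying in $W_1$, contradicting $\overline{U_1}\cap A_1=\emptyset$: from the finitely many $a_k^1,b_l^1$ one wants to extract a single element of $A_1$ lying above every $x_m^1$ and below every $y_m^1$, hence in $W_1\cap A_1$. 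I expect this to be the delicate point, requiring the closedness of $A_1$ together with the freedom to shrink $W_1$ (and possibly enlarge the finite family), since it is precisely where the interaction of the two causal structures across $A$ must be controlled; once the leak is excluded, $\pi(W_1)\subseteq\pi(U_1)\subseteq U$ settles this case.

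For $[p]\in A$ no spurious leak occurs, because $[p]$ genuinely has representatives in both spaces. I would mimic the construction in Proposition~\ref{lcc}: using strong causality of $X_1$ and $X_2$ in each factor, choose causally convex open neighbourhoods $W_1\ni p^1$, $W_2\ni p^2$ with $f(W_1\cap A_1)=W_2\cap A_2$ and $\pi(W_1\sqcup W_2)\subseteq U$. Since $A_1$ and $A_2$ are non-timelike locally isolating (a standing assumption on the amalgamation), I can find $b_-^i,b_+^i\in W_i\cap A_i$ with $b_-^i\ll p^i\ll b_+^i$ and $f(b_\pm^1)=b_\pm^2$. Then Proposition~\ref{causal rep}(i) identifies the single timelike diamond $I_X([b_-],[b_+])=\pi\big(I_1(b_-^1,b_+^1)\sqcup I_2(b_-^2,b_+^2)\big)$, which contains $[p]$ and, by causal convexity of the $W_i$ together with Lemma~\ref{relation reformulation}, is contained in $\pi(W_1\sqcup W_2)\subseteq U$. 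This exhibits the required subbasis element and completes the argument.
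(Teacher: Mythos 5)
Your treatment of the case $[p]\in A$ is correct and coincides with the paper's proof: causally convex sets $W_i$ obtained from strong causality in each factor, non-timelike local isolation of the $A_i$ to produce $b_\pm^i\in W_i\cap A_i$ with $f(b_\pm^1)=b_\pm^2$, and Proposition~\ref{causal rep}(i) to identify $I_X([b_-],[b_+])$ with $\pi\bigl(I_1(b_-^1,b_+^1)\sqcup I_2(b_-^2,b_+^2)\bigr)\subseteq\pi(W_1\sqcup W_2)\subseteq U$.

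The case $[p]\notin A$ is where the proposal is incomplete, and you say so yourself: the exclusion of the ``leak'' into $X_2\setminus A_2$ is announced as the delicate point but never carried out, so as it stands this is a genuine gap, not a proof. Moreover, the reduction you propose --- extracting from the finitely many gates $a_k,b_l$ a \emph{single} point of $A_1$ lying in $W_1=\bigcap_m I_1(x_m^1,y_m^1)$ --- does not follow from the relations you have derived. The leak data gives $x_k^1\ll a_k^1$ only for the one index $k$ attached to $a_k$, together with $a_k^1\ll b_l^1\ll y_l^1$ for all $k,l$; hence $a_k^1\in I_1^+(x_k^1)\cap\bigcap_l I_1^-(y_l^1)$ and dually $b_l^1\in\bigcap_k I_1^+(x_k^1)\cap I_1^-(y_l^1)$, but no point simultaneously above \emph{all} the $x_m^1$ and below \emph{all} the $y_m^1$ is produced, and closedness of $A_1$ plus shrinking $W_1$ will not conjure one up. So the route you sketch for closing the gap is a dead end.

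The paper sidesteps this by arguing diamond by diamond rather than at the level of the intersection: it asserts $I_X([x_k],[y_k])=\pi(I_1(x_k^1,y_k^1))$ for each $k$ separately and only then commutes $\pi$ with the finite intersection using injectivity of $\pi|_{X_1}$. For a \emph{single} diamond the leak argument is a one-liner: a point $\{w^2\}\in I_X([x_k],[y_k])$ forces, via Lemma~\ref{relation reformulation} and the $\tau$-preservation of $f$, a chain $x_k^1\ll a_k^1\ll b_k^1\ll y_k^1$ with $a_k^1\in A_1$, i.e.\ a point of $A_1$ inside $I_1(x_k^1,y_k^1)$ itself. So what must be excluded is only that each \emph{individual} $X_1$-diamond meets $A_1$, not that their intersection does --- a much weaker statement, and the one your argument should target. (Note that strong causality of $X_1$ only places the intersection $W_1$ inside $\pi^{-1}(U)$, so you would still have to justify why the individual diamonds $I_1(x_k^1,y_k^1)$ can be arranged to avoid $A_1$; this is the precise content hidden in the paper's displayed chain of equalities and is the ingredient your write-up is missing.)
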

\begin{proof}
If $[z] \in X \setminus A$, let $U$ be a neighbourhood of $[z]$ that does not meet $A$. Then $\pi^{-1}(U)$ is a neighbourhood of, say, $z^1$ in $X_1$ that does not meet $A_1$. 
By strong causality of $X_1$, we find points $x^1_1,y_1^1, x_2^1,y_2^1, \ldots, x_n^1,y_n^1$ in $X_1$ such that $z^1 \in \cap_{i=1}^n I_1(x_i^1,y_i^1) \subseteq \pi^{-1}(U)$.
As $\pi|_{X_1}$ is injective, we compute
$[z] \in \pi(\cap_{i=1}^n I_1(x_i^1,y_i^1)) = \cap_{i=1}^n \pi(I_1(x_i^1,y_i^1)) =\cap_{i=1}^n I_X([x_i],[y_i]) \subseteq \pi(\pi^{-1}(U)) = U$. \\

Let $[z] \in A$ and let $U \subseteq X$ be a neighbourhood of $[z]$. Then $\pi^{-1}(U)$ is open in $X_1 \sqcup X_2$, i.e., $\pi^{-1}(U)=U_1 \sqcup U_2$, where $U_i$ is open in $X_i, i=1,2$.
Moreover, by definition we have $f(U_1 \cap A_1)=U_2 \cap A_2$. 
By strong causality, we find points $x_1^1,y_1^1,x_2^2,y_2^2, \ldots, x_n^1,y_n^1$ in $X_1$ and $p_1^2,q_1^2, p_2^2,q_2^2, \ldots, p_m^2,q_m^2$ in $X_2$ such that $z^1 \in D_1:=\cap_{i=1}^n I_1(x_i^1,y_i^1) \subseteq U_1$ and $z^2 \in D_2 := \cap_{j=1}^m I_1(p_j^2,q_j^2) \subseteq U_2$.
Then we find open neighbourhoods $V_i \subseteq D_i$ such that $f(V_1 \cap A_1) = V_2 \cap A_2$. 
As $A_i$ is non-timelike locally isolating, there exists $b_+^i, b_-^i \in V_i \cap A_i$, such that $b_-^i \ll z^i \ll b_+^i$. Since $D_i$ is causally convex, we have $I(b_-^i,b_+^i) \subseteq D_i \subseteq U_i$. 
Finally, $I(b_-^1,b_+^1) \sqcup I(b_-^2,b_+^2) \subseteq U_1 \sqcup U_2 = \pi^{-1}(U)$ and hence 
\begin{equation}
U=\pi(\pi^{-1}(U))=\pi(U_1 \sqcup U_2) \supseteq \pi(I(b_-^1,b_+^1) \sqcup I(b_-^2,b_+^2))=I_X([b_-],[b_+]), 
\end{equation}
where the first equality holds since $\pi$ is surjective and the last equality is due to Proposition \ref{causal rep}. To summarize, in an arbitrary neighbourhood of $[z]$ we found an intersection of elements of the subbasis of the Alexandrov topology, showing that $X$ is strongly causal.
\end{proof}

\begin{defin}[Time observation]
A subset $A$ of a Lorentzian pre-length space $A$ is called future observing if for all $x \in X$ there exists $a_- \in A$ such that $J^+(x) \cap A \subseteq J^+(a_-) \cap A$. Similarly, we define a past observing set. We say $A$ is time observing if it is both future and past observing. In particular, we then infer for all $x,y \in X$ the existence of $a_-, a_+ \in A$ such that $J(x,y) \cap A \subseteq J(a_-,a_+) \cap A$.
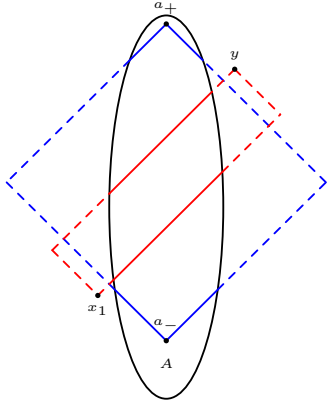
\begin{figure}[H]
\begin{center}
\definecolor{ffqqqq}{rgb}{1,0,0}
\definecolor{qqqqff}{rgb}{0,0,1}
\begin{tikzpicture}[line cap=round,line join=round,>=triangle 45,x=1cm,y=1cm, scale=1.5]
\draw [rotate around={90:(0,-0.01878591394685843)},line width=0.7pt] (0,-0.01878591394685843) ellipse (1.6942545779695612cm and 0.500030738833177cm);
\draw [line width=0.7pt,dashed,color=blue] (-1.4,0.2)-- (-0.45123870101592833,-0.7487612989840716);
\draw [line width=0.7pt,dashed,color=red] (-1,-0.4)-- (-0.49877471253355843,0.10122528746644155);
\draw [line width=0.7pt,color=blue] (-0.45123870101592833,-0.7487612989840716)-- (0,-1.2);
\draw [line width=0.7pt,color=qqqqff] (0,-1.2)-- (0.45123870101592833,-0.7487612989840716);
\draw [line width=0.7pt,dashed,color=blue] (0.45123870101592833,-0.7487612989840716)-- (1.4,0.2);
\draw [line width=0.7pt,dashed,color=blue] (1.4,0.2)-- (0.3216855996517911,1.2783144003482085);
\draw [line width=0.7pt,color=blue] (0.3216855996517911,1.2783144003482085)-- (0,1.6);
\draw [line width=0.7pt,color=qqqqff] (0,1.6)-- (-0.3216855996517911,1.2783144003482085);
\draw [line width=0.7pt,dashed,color=blue] (-0.3216855996517911,1.2783144003482085)-- (-1.4,0.2);
\draw [line width=0.7pt,dashed,color=ffqqqq] (-1,-0.4)-- (-0.6,-0.8);
\draw [line width=0.7pt,dashed,color=ffqqqq] (-0.6,-0.8)-- (-0.4625304321592599,-0.6625304321592599);
\draw [line width=0.7pt,color=ffqqqq] (-0.4625304321592599,-0.6625304321592599)-- (0.4915697953082756,0.29156979530827554);
\draw [line width=0.7pt,dashed,color=ffqqqq] (0.4915697953082756,0.29156979530827554)-- (1,0.8);
\draw [line width=0.7pt,dashed,color=ffqqqq] (0.6,1.2)-- (0.39961493275060683,0.9996149327506068);
\draw [line width=0.7pt,color=ffqqqq] (0.39961493275060683,0.9996149327506068)-- (-0.49877471253355843,0.10122528746644155);
\draw [line width=0.7pt,dashed,color=ffqqqq] (0.6,1.2)-- (1,0.8);

\begin{tiny}
\coordinate [circle, fill=black, inner sep=0.7pt, label=90: {$a_+$}] (p) at (0,1.6);
\coordinate [circle, fill=black, inner sep=0.7pt, label=270: {$x_1$}] (p) at (-0.6,-0.8);

\coordinate [circle, fill=black, inner sep=0.7pt, label=90: {$y$}] (p) at (0.6,1.2);

\coordinate [circle, fill=black, inner sep=0.7pt, label=90: {${a_{-}}$}] (p) at (0,-1.2);

\coordinate [label=270: {$A$}] (p) at (0,-1.3);
\end{tiny}
\end{tikzpicture}
\end{center}
\caption{Showcasing the property of being time observing.}
\end{figure}
Note that $J^-(A) \cap J^+(A)=X$ is a sufficient condition for $A$ being time observing. Indeed, in this case for all $x \in X$ we find $a_-$ and $a_+$ in $A$ such that $a_- \leq x \leq a_+$ and hence $J^-(x) \subseteq J^-(a_+)$ as well as $J^+(x) \subseteq J^+(a_-)$.
\end{defin}

\begin{prop}[Global hyperbolicity]
\label{prop: glob hyp}
Let $X_1$ and $X_2$ be two globally hyperbolic, causally path connected, locally causally closed and $d$-compatible Lorentzian pre-length spaces. Let $A_1$ and $A_2$ be time observing. Then $X:= \amA$ is globally hyperbolic.
\end{prop}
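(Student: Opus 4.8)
The goal is to verify the two defining properties of global hyperbolicity for $X$: that every causal diamond $J_X([x],[y])$ is compact, and that $X$ is non-totally imprisoning. The plan is to first record the consequences of the hypotheses. By Lemma~\ref{causal ladder0} each $X_i$ is strongly causal and locally compact (we take for granted, as is standard, that the $X_i$ have no $\ll$-isolated points), and moreover each $X_i$ is globally causally closed by the corollary on global causal closedness above. From this, Proposition~\ref{strong causality} gives that $X$ is strongly causal and Proposition~\ref{lcc} gives that $X$ is locally causally closed; local compactness of $X$ is inherited by taking neighbourhoods of the form $\pi(K_1 \sqcup K_2)$ with $K_i$ compact and $f(K_1 \cap A_1) = K_2 \cap A_2$.

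The heart of the argument is to show $J_X([x],[y])$ is contained in a compact set, and here is where time observation enters. First I would split $J_X([x],[y])$ according to which original space a point lies in, using Lemma~\ref{relation reformulation} and Proposition~\ref{causal rep}: a point $[z] \in X_1 \setminus A_1$ of the diamond satisfies $\ell^1 \leq z^1 \leq u^1$ for suitable points determined by $[x]$ and by a \emph{crossing point} $a^1 \in A_1$ (a class $[a] \in A$ with $[x] \tleq [a] \tleq [y]$), and symmetrically in $X_2$. The key observation is that the set of crossing points is relatively compact: if $[a] \in J_X([x],[y]) \cap A$, then $a^1 \in J_1^+(x^1) \cap A_1$ (or the analogous $J_2$-statement when $[x] \in X_2$), so future observation of $A_1$ yields a single $a_-^1 \in A_1$ with $a^1 \geq a_-^1$, and past observation applied to $[y]$ yields $a_+^1$ with $a^1 \leq a_+^1$ (translating through the $\leq$-preserving $f$ when $[y] \in X_2$). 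Hence all crossing points lie in the compact diamond $J_1(a_-^1,a_+^1) \cap A_1$, and feeding these bounds back into the decomposition gives
\[
J_X([x],[y]) \subseteq \pi\big(J_1(\ell^1,u^1) \sqcup J_2(\ell^2,u^2)\big),
\]
a continuous image of a compact set by global hyperbolicity of the $X_i$, hence itself relatively compact. Without time observation the crossing points could fail to lie in a single compact diamond and this containment would break down; this is the step I expect to be the main obstacle.

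It then remains to see that $J_X([x],[y])$ is closed. Given $[z_n] \to [z]$ with $[z_n] \in J_X([x],[y])$, I would pass (via Remark~\ref{rem: convergence in amalg}) to a subsequence converging in one of the $X_i$, and argue $[x] \tleq [z] \tleq [y]$ directly from the reformulation in Lemma~\ref{relation reformulation}. When the relevant relation is realized within a single $X_i$, global causal closedness of $X_i$ passes it to the limit; when it is realized by crossing through $A$, the associated crossing points $a_n$ lie in the compact set just produced, so a convergent subsequence $a_n^1 \to a^1 \in A_1$ exists and global causal closedness of the $X_i$ again delivers $[x] \tleq [a] \tleq [z]$ and $[z] \tleq [y]$. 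Being closed in a compact set, $J_X([x],[y])$ is compact.

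Finally, for non-total imprisonment I would invoke Lemma~\ref{causal ladder1}: since $X$ is already strongly causal and locally causally closed, it suffices to prove that $X$ is $d$-compatible. For $[x] \in A$ I would use a causally convex, $d$-compatible timelike-diamond neighbourhood $U = \pi(I_1(p^1,q^1) \sqcup I_2(p^2,q^2))$ (with $I_1$, $I_2$ agreeing on $A$ through $f$ as in Proposition~\ref{lcc}); a causal curve in $U$ decomposes into pieces lying in $X_1$ and in $X_2$, and consecutive $X_1$-pieces may be joined by bridging causal curves, which exist by causal path-connectedness and stay inside $I_1(p^1,q^1)$ by causal convexity. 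Bounding the resulting (finite partial) concatenations by the $d$-compatibility constant of $I_1(p^1,q^1)$ shows that the total $d$-length of all $X_1$-pieces is at most $C_1$, and likewise $C_2$ for $X_2$. This yields a uniform bound $C_1 + C_2$, establishes $d$-compatibility, and hence non-total imprisonment via Lemma~\ref{causal ladder1}, completing the verification that $X$ is globally hyperbolic.
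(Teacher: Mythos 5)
Your proof is correct and follows essentially the same route as the paper: strong causality and $d$-compatibility reduce non-total imprisonment to Lemma \ref{causal ladder1}, and time observation is used exactly as in the paper's argument to trap the crossing points in a compact diamond, with global causal closedness of the $X_i$ handling the limits. The only (cosmetic) difference is that you first enclose $J_X([x],[y])$ in a compact set and then prove closedness, whereas the paper argues sequential compactness directly, case by case.
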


\begin{proof}
By Lemma \ref{causal ladder0}, $X_1$ and $X_2$ are strongly causal, so by Proposition \ref{strong causality}, $X$ is strongly causal. It is easy to infer via Proposition \ref{prop: loc} that $X$ is $d$-compatible as well. 
Then $X$ fulfills all the assumptions to apply Lemma \ref{causal ladder1}, showing that $X$ is non-totally imprisoning. 
It is left to show that diamonds in $X$ are compact.
Let $[x],[y] \in X$ with $[x] \tleq [y]$. 
If $[x],[y] \in X_i \setminus A_i$ and $J_i(x^i,y^i) \cap A_i = \emptyset$, then $J_X([x],[y])=\pi(J_i(x^i,y^i))$ inherits the compactness by assumption.
If $[x],[y] \in A$, the compactness of $J_X([x],[y])=\pi(J_1(x^1,y^1) \sqcup J_2(x^2,y^2))$ follows by assumption as well. 
The two cases left to show are $[x],[y] \in X_i$ and their causal diamond meets $A$ or, say, $[x] = \{x^1\}, [y] = \{y^2\}$. \\

Assume we are in the first case, and both points are contained in, say, $X_1 \setminus A_1$. 
By Proposition \ref{causal rep}, we then have $J_X([x],[y])=\pi(J_1(x^1,y^1) \sqcup (\cup_{x^1 \leq a^1}J_2^+(a^2)) \cap (\cup_{b^1 \leq y^1} J_2^-(b^2)))$.
Let $([p_n])_{n \in \N}$ be a sequence in $J_X([x],[y])$. 
If there exists a subsequence $(p_{n_k}^1)_{k \in \N}$ in $J_1(x^1,y^1)$, then by the compactness of $J_1(x^1,y^1)$ we can extract a convergent subsequence which yields a convergent subsequence of $([p_n])_{n \in \N}$. 
So suppose all (but finitely many) sequence members are from the second space originally. 
By definition, this means that for all large enough $n \in \N$ we find $a_n^1, b_n^1 \in A_1$ such that $a_n^2 \leq p_n^2 \leq b_n^2$. 
In particular, $x^1 \leq a_n^1 \leq b_n^1 \leq y^1$, so $a_n^1,b_n^1 \in J_1(x^1,y^1) \cap A_1$. 
Since $A_1$ is time observing, we find $q_-^1,q_+^1 \in A_1$ such that $J_1(x^1,y^1) \cap A_1 \subseteq J_1(q_-^1,q_+^1) \cap A_1$. 
Then also $a_n^1,b_n^1 \in J_1(q_-^1,q_+^1) \cap A_1$ and since $f$ is causality preserving, we conclude $a_n^2,b_n^2 \in J_2(q_-^2,q_+^2) \cap A_2$ as well. 
Thus, $(p_n^2)_{n \in \N}$ is contained in a compact set and has a convergent subsequence, say without loss of generality the whole sequence already converges to some $p^2 \in \widebar{\cup_{x^1 \leq a^1}J_2^+(a^2)) \cap (\cup_{b^1 \leq y^1} J_2^-(b^2))} =: \widebar{D}$. \\

It is left to show that $p^2$ is not only in the closure but inside the set $D$ itself. To this end consider the resulting sequences $(b_n^2)_{n \in \N}$ and $(a_n^2)_{n \in \N}$. Clearly, $x^1 \leq a_n^1 \leq b_n^1 \leq y^1$ for all $n$ by assumption.
The two sequences $(b_n^1)_{n \in \N}$ and $(a_n^1)_{n \in \N}$ also possess convergent subsequences since they are contained in the compact set $J_1(x^1,y^1)$. 
Say $a_n^1 \to a^1, b_n^1 \to b^1$, then $x^1 \leq a^1$ and $b^1 \leq y^1$ by the (global) causal closedness of $X_1$. 
Thus, $J_2^+(a^2)$ and $J_2^-(b^2)$ are also part of the unions in $D$.
By the causal closedness of $X_2$, we infer from $a_n^2 \leq p_n^2 \leq b_n^2$ that $a^2 \leq p^2 \leq b^2$. So $p^2$ is an element of $D$ and not just of $\widebar{D}$. 
In particular, $[p] \in J_X([x],[y])$.
This shows that $J_X([x],[y])$ is compact. 
The case of $[x] \in X \setminus A, [y] \in A$ is similar and less complicated than the one we just proved. \\

Finally, consider the case $[x]=\{x^1\}, [y]=\{y^2\}$. 
Then  $J_X([x],[y])=\pi(J^+_1(x^1) \cap (\cup_{y^2 \gg a^2} J_1^-(a^1)) \sqcup 
(\cup_{x^1 \ll b^1} J_2^+(b^2)) \cap J_2^-(y^2))$. 
Let $([p_n])_{n \in \N}$ be a sequence in $J_X([x],[y])$, then there is a subsequence in one of the two original spaces. Say without loss of generality $p_n^1 \in J^+_1(x^1) \cap (\cup_{y^2 \gg a^2} J_1^-(a^1))$ for all $n$. 
Then for all $n$ we have that there exists $[a_n] \in A: x^1 \leq p_n^1 \leq a_n^1 \sim a_n^2 \leq y^2$. As $A_2$ is time observing, we find $c^2 \in A_2$ such that $J_2^-(c^2) \cap A_2 \supseteq J_2^-(y^2) \cap A_2$. Thus, we have $c^2 \geq a_n^2$ for all $n$, and hence $c^1 \geq a_n^1$ as well. 
In particular, $p_n^1, a_n^1 \in J_1(x^1,c^1)$ for all $n$, so we infer the existence of converging subsequences, say $p_n^1 \to p^1$ and $a_n^1 \to a^1$. By the global causal closedness, it follows that $p^1 \leq a^1$. Moreover, by $y^2 \geq a_n^2$ for all $n$ we infer $y^2 \geq a^2$, so $p^1 \in J_1^+(x^1) \cap (\cup_{y^2 \gg a^2} J_1^-(a^1))$ and hence $[p] \in J_X([x],[y])$, establishing that diamonds are compact.
\end{proof}
\begin{prop}[Distinction]
\label{causal distinction}
Let $X_1$ and $X_2$ be two strongly causal, non-timelike locally isolating and distinguishing Lorentzian pre-length spaces. Then $X:=\amA$ is distinguishing.
\end{prop}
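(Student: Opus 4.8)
The plan is to prove future-distinguishing, the past case being the time-dual, i.e.\ to show that $I_X^+([x]) = I_X^+([y])$ forces $[x]=[y]$. Everything rests on one \emph{restriction observation}: for any $[x]\in X$ with a representative $x^i\in X_i$ one has $I_X^+([x])\cap\pi(X_i)=\pi(I_i^+(x^i))$, and dually for the past. Here $\supseteq$ is Proposition \ref{prop: gluing properties}(ii); for $\subseteq$ I would take the explicit form of $I_X^+([x])$ from Proposition \ref{causal rep}, intersect with $\pi(X_i)$, and observe that the part of the future living in the \emph{other} factor can only meet $\pi(X_i)$ inside $A$, where the $\tau$-- (hence $\ll$--) preservation of $f$ pushes it back into $\pi(I_i^+(x^i))$. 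Since $\pi|_{X_i}$ is injective, this recovers $I_i^+(x^i)$ from $I_X^+([x])$.

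Two preliminaries about $X$ are needed. First, $X$ is strongly causal by Proposition \ref{strong causality}, and it is non-timelike locally isolating: for $[x]\notin A$ shrink a neighbourhood into $X_i\setminus A_i$ and invoke NTLI of $X_i$, while for $[x]\in A$ use the causal compatibility $I_1^{\pm}(x^1)\neq\emptyset\iff I_2^{\pm}(x^2)\neq\emptyset$ together with NTLI of both $X_i$. Hence by Lemma \ref{lem: nlti basis} timelike diamonds form a \emph{basis} of the topology of $X$ and of each $X_i$; moreover NTLI spaces contain no $\ll$-isolated points, so every future and past in sight is non-empty. With the restriction observation in hand, the same-space case is immediate: if $[x],[y]$ both have representatives in $X_i$, then $\pi(I_i^+(x^i))=\pi(I_i^+(y^i))$, so $I_i^+(x^i)=I_i^+(y^i)$ by injectivity of $\pi|_{X_i}$, and $x^i=y^i$ because $X_i$ is distinguishing.

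The decisive situation is the cross case $[x]=\{x^1\}$, $[y]=\{y^2\}$ with $x^1\notin A_1$, $y^2\notin A_2$, where I must show $I_X^+([x])=I_X^+([y])$ is impossible. Intersecting the assumed equality with $\pi(X_1)$ and with $\pi(X_2)$, applying the restriction observation on the side that has a representative and reading off the other side from Proposition \ref{causal rep}(ii), yields the two identities
\begin{align*}
I_1^+(x^1) &= B\cup\textstyle\bigcup_{a^1\in B}I_1^+(a^1), & B&=\{a^1\in A_1 : y^2\ll a^2\},\\
I_2^+(y^2) &= B'\cup\textstyle\bigcup_{a^2\in B'}I_2^+(a^2), & B'&=\{a^2\in A_2 : x^1\ll a^1\}.
\end{align*}
Comparing the $A$-parts (the first identity gives $B\subseteq I_1^+(x^1)\cap A_1$, the second gives $f(I_1^+(x^1)\cap A_1)\subseteq I_2^+(y^2)$, i.e.\ $I_1^+(x^1)\cap A_1\subseteq B$) collapses both to the single relation $B=I_1^+(x^1)\cap A_1$, equivalently $x^1\ll a^1\iff y^2\ll a^2$ for all $[a]\in A$.

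Now the contradiction. Since $A_1$ is closed and $x^1\notin A_1$, strong causality provides, via Lemma \ref{lem: nlti basis}, a causally convex open neighbourhood $S$ of $x^1$ with $S\cap A_1=\emptyset$; NTLI of $X_1$ then gives a point $z^1\in I_1^+(x^1)\cap S$, so $z^1\notin A_1$. Then $[z]\in I_X^+([x])=I_X^+([y])$ lies in $\pi(X_1\setminus A_1)$, so the spillover term forces some $a^1\in B$ with $a^1\ll z^1$; but $B=I_1^+(x^1)\cap A_1$ yields $x^1\ll a^1$, whence $a^1\in I_1(x^1,z^1)\subseteq J_1(x^1,z^1)\subseteq S$ by causal convexity — contradicting $a^1\in A_1$ and $S\cap A_1=\emptyset$. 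Thus the cross case never occurs; together with its past-dual and the absence of $\ll$-isolated points (which rules out the degenerate equal-empty-futures possibility), $X$ is distinguishing. The main obstacle is precisely this cross case: one must squeeze enough rigidity out of the single equation $I_X^+([x])=I_X^+([y])$ — which is what the two restriction identities and their collapse to $B=I_1^+(x^1)\cap A_1$ achieve — and then turn ``a point of $A_1$ lies strictly between $x^1$ and nearby future points'' into a genuine contradiction, for which the causally convex neighbourhoods from strong causality plus the closedness of $A_1$ are the right instruments.
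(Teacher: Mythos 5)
Your proof is correct and follows essentially the same route as the paper's: the same-space cases are handled by recovering the original futures from the glued one (your ``restriction observation'' is just a repackaging of Proposition \ref{causal rep} plus Lemma \ref{recovering subsets}), and the cross case is settled exactly as in the paper by extracting the equivalence $x^1\ll a^1\iff y^2\ll a^2$ and then contradicting it with a causally convex neighbourhood of $x^1$ disjoint from the closed set $A_1$. The only cosmetic slip is attributing the non-emptiness of $I_1^{\pm}(x^1)$ to non-timelike local isolation alone (the definition is conditional on a non-empty future/past); it actually comes from strong causality, which places every point inside a timelike diamond, but since strong causality is assumed this changes nothing.
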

\begin{proof}
We need to show that $I_X^{\pm}([x])=I_X^{\pm}([y])$ implies $[x]=[y]$ for all $[x],[y] \in X$. We distinguish several cases depending on where the points originally come from.
We will only consider the future case, as the past case is completely analogous.
Suppose first $[x],[y] \in A$ with $I^{+}([x])=I_X^{+}([y])$. By Proposition \ref{causal rep}, we then have $\pi(I_1^+(x^1) \sqcup I_2^+(x^2))=\pi(I_1^+(y^1) \sqcup I_2^+(y^2))$.
As $f$ is $\ll$-preserving, we clearly have $\pi(I^+_1(x^1) \cap A_1)=\pi(I_2^+(x^2) \cap A_2)$ as well as $\pi(I^+_1(y^1) \cap A_1)=\pi(I^+_2(y^2) \cap A_2)$.
By Lemma \ref{recovering subsets}, we then infer 
$I^+_1(x^1) \sqcup I^+_2(x^2) = \pi^{-1}(\pi(I^+_1(x^1) \sqcup I^+_2(x^2))) = \pi^{-1}(\pi(I^+_1(y^1) \sqcup I^+_2(y^2))) = I^+_1(y^1) \sqcup I^+_2(y^2)$.
So in particular $I^+_1(x^1)=I^+_1(y^1)$ and $I^+_2(x^2)=I^+_2(y^2)$, hence $[x]=[y]$ by $X_1$ and $X_2$ being distinguishing. \\

Now suppose, say, $[x] \in X_1$ and $[y] \in X_1 \setminus A_1$, with $I_X^{+}([x])=I_X^{+}([y])$.
If $[x] \in X_1 \setminus A_1$, then we have
$\pi(I_1^+(x^1) \sqcup (\cup_{x^1 \ll a^1} I_2^+(a^2))=\pi(I_1^+(y^1) \sqcup (\cup_{y^1 \ll b^1} I_2^+(b^2))$ by Proposition \ref{causal rep}.
Similarly as above, we have $\pi(I_1^+(x^1) \cap A_1)=\pi(\cup_{x^1 \ll a^1} I_2^+(a^2) \cap A_2)$ by the $\ll$-preservation of $f$. 
Indeed, let $b^1 \in I_1^+(x^1) \cap A_1$, then since $\ll$ is open, there is a neighbourhood $U$ of $b^1$ such that $U \subseteq I_1^+(x^1)$. 
As $A_1$ is non-timelike locally isolating, we find $b_-^1 \in U \cap A_1$ such that $b_-^1 \ll b^1$. 
Then also $b_-^2 \ll b^2$ and since $x^1 \ll b_-^1$, it follows that $b^2 \in \cup_{x^1 \ll a^1} I_2^+(a^2) \cap A_2$.
Conversely, if $b^2 \in \cup_{x^1 \ll a^1} I_2^+(a^2) \cap A_2$, then we obtain the existence of $c^1 \in I_1^+(x^1) \cap A_1$ such that $c^2 \ll b^2$. Then also $c^1 \ll b^1$ and hence $x^1 \ll b^1$, establishing $b^1 \in I_1^+(x^1) \cap A_1$. 
In summary, we have shown $b^1 \in I_1(x^1) \cap A_1$ whenever $b^2 \in \cup_{x^1 \ll a^1} I_2^+(a^2) \cap A_2$, i.e., $\pi(I_1^+(x^1) \cap A_1)=\pi(\cup_{x^1 \ll a^1} I_2^+(a^2) \cap A_2)$. 
Then we have $I_1^+(x^1) \sqcup (\cup_{x^1 \ll a^1} I_2^+(a^2)=I_1^+(y^1) \sqcup (\cup_{y^1 \ll b^1} I_2^+(b^2)$ by Lemma \ref{recovering subsets}. So $I_1^+(x^1)=I_1^+(y^1)$ and hence $x^1=y^1$ and $[x]=[y]$ by $X_1$ being distinguishing. \\

In the case of $[y] \in X_1 \setminus A_1$ and $[x] \in A$, we can proceed analogously using $\pi(I_1^+(x^1) \sqcup I_2^+(x^2))=\pi(I_1^+(y^1) \sqcup (\cup_{y^1 \ll b^1} I_2^+(b^2))$. \\

It is left to cover the case of $[x] \in X_1 \setminus A_1$ and $[y] \in X_2 \setminus A_2$. We then have $I_X^+([x])=\pi(I_1^+(x^1) \sqcup \cup_{x^1 \ll a^1} I_2^+(a^2))=\pi(\cup_{y^2 \ll a^2} I_1^+(a^1) \sqcup I_2^+(y^2)) = I_X^+([y])$. 
By similar arguments as before we conclude $I_1^+(x^1)=\cup_{y^2 \ll a^2} I_1^+(a^1)$ and $\cup_{x^1 \ll a^1} I_2^+(a^2)=I_2^+(y^2)$.
Note that using the same letter in both unions is in fact not sloppy but accurate: if, say, $y^2 \ll a^2$, then by the above equality we infer the existence of some $b^1 \in I_1^+(x^1)$ such that $b^2 \ll a^2$. Then $b^1 \ll a^1$ and since $x^1 \ll b^1$, we have $x^1 \ll a^1$. We can apply the same argument vice versa.
To emphasize, we have $x^1 \ll a^1$ if and only if $y^2 \ll a^2$ for all $[a] \in A$.
Now we use similar arguments as in Proposition \ref{strong causality}. Since $x^1 \in X_1 \setminus A_1$, we find a neighbourhood $U_1$ containing $x^1$ that does not meet $A_1$. By Lemma \ref{lem: nlti basis} we find a timelike diamond $D_1$ inside $U_1$ which contains $x^1$ and whose endpoints lie in $U_1$.
Then for all $q^1 \in D_1 \cap I_1^+(x^1)$ (which is nonempty by non-timelike local isolation), there cannot exist $a^1 \in I_1^+(x^1) \cap A_1$ such that $a^1 \ll q^1$, since otherwise $a^1 \in D_1$. In particular, $q^1 \notin \cup_{y^2 \ll a^2} I_1^+(a^1)$ while $q^1 \in I_1^+(x^1)$, a contradiction. So under our assumptions, two points from two different spaces cannot have the same future.
In summary, we conclude that $X$ is distinguishing.
\end{proof}

\begin{ex}[Double flagpole]
\label{ex: flagpole}
The following example demonstrates that without the assumptions of strong causality and non-timelike local isolation, Proposition \ref{causal distinction} fails. Consider two ``flagpoles'' in the Minkowski plane, i.e., the union of (part of) the $t$-axis with a rectangle, see Figure \ref{fig: flagpole}. Both of these spaces are distinguishing. Gluing them along the $t$-axis results in a space which is not distinguishing anymore.
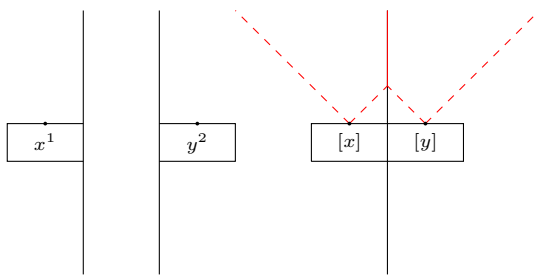
\begin{figure}
\begin{center}
\begin{tikzpicture}
\draw (0,-0.5) -- (0,3);
\draw (0,1) -- (1,1)--(1,1.5)--(0,1.5);

\draw (-1,-0.5) -- (-1,3);
\draw (-1,1) -- (-2,1)--(-2,1.5)--(-1,1.5);

\draw (3,-0.5) -- (3,3);
\draw (2,1) -- (4,1) -- (4,1.5) -- (2,1.5) -- (2,1);

\draw[red,dashed] (3.5,1.5) -- (3,2);
\draw[red,dashed] (3.5,1.5) -- (5,3);
\draw[red] (3,2) -- (3,3);
\draw[red,dashed] (2.5,1.5) -- (3,2);
\draw[red,dashed] (2.5,1.5) -- (1,3);

\begin{scriptsize}
\coordinate [circle, fill=black, inner sep=0.5pt, label=270: {$y^2$}] (A1) at (0.5,1.5);

\coordinate [circle, fill=black, inner sep=0.5pt, label=270: {$x^1$}] (A1) at (-1.5,1.5);

\coordinate [circle, fill=black, inner sep=0.5pt, label=270: {$[y]$}] (A1) at (3.5,1.5);

\coordinate [circle, fill=black, inner sep=0.5pt, label=270: {$[x]$}] (A1) at (2.5,1.5);


\end{scriptsize}
\end{tikzpicture}
\end{center}
\caption{In this example, we have $I_X([x])=I_X([y])$ but $[x] \neq [y]$.}
\label{fig: flagpole}
\end{figure}
\end{ex}

Until now, most of the properties were able to be transferred either as is or with relatively harmless additional assumptions.
This changes now, as the remaining steps of the causal ladder appear to be less suitable for preservation via gluing constructions. \\

Let us begin with non-total imprisonment.
It is easily shown that a compact set in $X$ is built from two compact sets in $X_1$ and $X_2$, i.e., for any compact $K \subseteq X$ we have $K=\pi(K_1 \sqcup K_2)$ with $K_i \subseteq X_i, i=1,2$ compact.
Further, any causal curve in $K$ can thus be decomposed into causal curves in $K_1$ and $K_2$.
Then the first problem is that the causal curve may consist of infinitely many pieces.
At first glance one might eliminate this problem by imposing causal path-connectedness on $X_1$ and $X_2$, which would enable us to connect all pieces in one space into a single curve. 
Then of course the problem is that these connecting pieces cannot be guaranteed to stay inside $K_1$ or $K_2$.
This is yet another indicator that below strong causality, there is no way of taming the causality and relating it to the topology. \\

The following definition 
can be regarded as a ``non-intrinsic'' formulation of non-total imprisonment. We will show that under this stronger condition, the property of non-total imprisonment is inherited.

\begin{defin}[Extrinsic non-total imprisonment]
\label{def enti}
Let $X$ be a Lorentzian pre-length space. $X$ is called extrinsically non-totally imprisoning if for all compact sets $K \subseteq X$ there is a constant $C>0$ such that for all causal sequences $(x_1,x_2,\ldots, x_n)$ in $K$ we have $\sum_{i=1}^{n} d(x_i,x_{i+1}) \leq C$, where a causal sequence satisfies $x_i \leq x_{i+1}$ for all $i$. 
Since $L_d$ is given as the supremum of causal sequences, we immediately get that this property implies non-total imprisonment. 
Moreover, we also have the same bound for infinite sequences, since if there exists an infinite causal sequence whose sum of distances is larger than $C$, then this must already occur after summing up finitely many distances.
\end{defin}

\begin{rem}[Placing the extrinsic version on the causal ladder]
Recall the classical example of a spacetime which is causal but not strongly causal, cf.\ \cite[Example 14.12]{ON83}: the Lorentz cylinder $M:=\Sph^1 \times \R$ with two horizontal strips removed. This spacetime is also non-totally imprisoning, as in fact \emph{any} causal curve has a (uniformly) bounded $d$-length since wrapping around is forbidden via the removed pieces. 
Thus, also any causal chain has uniformly bounded $d$-length. So $M$ is an example of a extrinsically non-totally imprisoning space which is not strongly causal. This suggests at least that this property is not stronger than strong causality.
\end{rem}

\begin{lem}[Decomposition of compact subsets in amalgamation]
\label{lem: cpt sets}
Let $X_1$ and $X_2$ be metric spaces, $A_i \subseteq X_i$ closed and $f:A_1 \to A_2$ a locally bi-Lipschitz homeomorphism. Consider the quotient space $X$ of $X_1 \sqcup X_2$ generated by the equivalence relation $a \sim f(a)$ for all $a \in A_1$.
Let $K \subseteq X$ be compact. Then there exist compact $K_i \subseteq X_i, i=1,2$ such that $\pi^{-1}(K)=K_1 \sqcup K_2$.
\end{lem}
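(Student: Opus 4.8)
The plan is to take the obvious candidate decomposition and reduce the entire statement to a single topological fact about the projection $\pi$. Set $K_i := \pi^{-1}(K) \cap X_i$ for $i \in \{1,2\}$. Since $X_1$ and $X_2$ are disjoint in $X_1 \sqcup X_2$, one trivially has $\pi^{-1}(K) = K_1 \sqcup K_2$ with $K_i \subseteq X_i$, so the only real content is the compactness of each $K_i$.

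To obtain this, I would show that the restriction $\pi_i := \pi|_{X_i} : X_i \to \pi(X_i)$ is a homeomorphism onto its image. First, $\pi_i$ is continuous (a restriction of the continuous map $\pi$) and injective, because the equivalence relation identifies points only \emph{across} the two spaces (each $a \in A_1$ with $f(a) \in A_2$) and never two distinct points within the same $X_i$. Second, and this is where the hypotheses genuinely enter, I claim $\pi_i$ is a closed map: for $C \subseteq X_1$ closed one computes $\pi^{-1}(\pi(C)) = C \sqcup f(C \cap A_1)$, and since $A_1$ is closed, $C \cap A_1$ is closed in $X_1$; as $f$ is a homeomorphism of $A_1$ onto $A_2$ and $A_2$ is closed in $X_2$, the image $f(C \cap A_1)$ is closed in $X_2$. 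Hence $\pi^{-1}(\pi(C))$ is closed in $X_1 \sqcup X_2$, so $\pi(C)$ is closed in the quotient topology. A continuous closed bijection is a homeomorphism, giving the claim (the case $i = 2$ is symmetric, using $f^{-1}$).

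With this in hand the conclusion is quick. Taking $C = X_1$ in the computation above shows $\pi(X_1)$ is closed in $X$ (here $\pi^{-1}(\pi(X_1)) = X_1 \sqcup A_2$), so $K \cap \pi(X_1)$ is a closed subset of the compact set $K$ and therefore compact --- note that no Hausdorff assumption on $X$ is needed for this. Since $\pi_1$ is a homeomorphism onto $\pi(X_1)$, its inverse is continuous and carries the compact set $K \cap \pi(X_1)$ to the compact set $K_1 = \pi_1^{-1}(K \cap \pi(X_1)) = \pi^{-1}(K) \cap X_1$. The same argument applied to $X_2$ gives $K_2$ compact, completing the decomposition $\pi^{-1}(K) = K_1 \sqcup K_2$.

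The main obstacle is the closedness of $\pi_i$: it is exactly the step that uses that the $A_i$ are closed and that $f$ is a homeomorphism (so that images of relatively closed sets stay closed). Everything else --- injectivity, the trivial set-theoretic decomposition, and pulling back compactness through a homeomorphism --- is routine. It is also worth noting that the local bi-Lipschitz regularity of $f$ plays no role here; mere topological closedness of $f$ is all that is used.
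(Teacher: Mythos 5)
Your proof is correct, but it takes a genuinely different route from the paper's. The paper argues via sequential compactness: given a sequence in $K_1$, it projects it into $K$, extracts a convergent subsequence there, uses that $\pi(X_1)$ is closed to place the limit in $\pi(K_1)$, and then invokes Remark \ref{rem: convergence in amalg} to pull the convergence back to $X_1$. You instead prove the stronger structural fact that $\pi|_{X_i}$ is a closed embedding, via the identity $\pi^{-1}(\pi(C)) = C \sqcup f(C \cap A_1)$ for closed $C \subseteq X_1$, and then transport compactness of $K \cap \pi(X_i)$ back through the inverse homeomorphism. Your computation is right (the equivalence classes are exactly $\{x\}$ off $A_1 \cup A_2$ and $\{a, f(a)\}$ on it, so $\pi|_{X_i}$ is injective, and closedness of $A_i$ plus $f$ being a homeomorphism onto the closed set $A_2$ gives closedness of $f(C \cap A_1)$ in $X_2$), and your argument is purely topological, needing neither metrics nor sequences nor Hausdorffness; it also isolates the closed-embedding property as a reusable fact, and you correctly observe that the bi-Lipschitz hypothesis is not used. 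What the paper's route buys is brevity given machinery it has already set up (Remark \ref{rem: convergence in amalg}), and it works directly with the convergence structure that the rest of the paper traffics in. The one caveat common to both proofs is that your argument establishes closedness of $\pi(C)$ with respect to the \emph{quotient} topology on $X$, whereas $X$ also carries the topology of the quotient semi-metric $\td$; the paper silently identifies the two (as does its own proof of this lemma), so this is not a gap relative to the paper's standards, but it is the one place where the local bi-Lipschitz hypothesis would re-enter if one wanted to be scrupulous about which topology $K$ is compact in.
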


\begin{proof}
Let $K \subseteq X$ be compact. 
Then by definition $\pi^{-1}(K)=K_1 \sqcup K_2 \subseteq X_1 \sqcup X_2$ for some $K_i \subseteq K$.
Let, say, $(x^1_n)_{n \in \N}$ be a sequence in $K_1$. Then $([x_n])_{n \in \N}$ is a sequence in $K$, which contains a convergent subsequence as $K$ is compact, say $([x_n])_{n \in \N} \to [x] \in K$. Note that $\pi(X_1)$ is closed in $X$, hence $[x] \in K \cap \pi(X_1) = \pi(K_1)$. 
Thus, 
$x^1_n \to x^1$ follows easily, cf.\ Remark \ref{rem: convergence in amalg}. This shows that $K_1$ is compact and similarly for $K_2$.
\end{proof}

\begin{prop}[Non-total imprisonment]
\label{prop: nti}
Let $X_1$ and $X_2$ be two extrinsically non-totally imprisoning Lorentzian pre-length spaces. Then $X$ is non-totally imprisoning.
\end{prop}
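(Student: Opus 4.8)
The plan is to bound the $\td$-length of an arbitrary causal curve $\gamma$ contained in a compact set $B \subseteq X$ by a constant depending only on $B$. First I would invoke Lemma \ref{lem: cpt sets} to write $\pi^{-1}(B) = K_1 \sqcup K_2$ with $K_i \subseteq X_i$ compact, and let $C_i$ be the constant furnished by extrinsic non-total imprisonment (Definition \ref{def enti}) for $K_i$. The goal is then to show $L_{\td}(\gamma) \le C_1 + C_2$ for every causal curve $\gamma$ in $B$; since this bound is independent of $\gamma$, it yields non-total imprisonment of $X$.

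Since $L_{\td}(\gamma)$ is the supremum over partitions of $\sum_k \td([\gamma(t_k)],[\gamma(t_{k+1})])$, and inserting points only increases such a sum by the triangle inequality for $\td$, it suffices to dominate every partition sum by a suitably refined one. Given a partition with points $[p_k] := [\gamma(t_k)]$, I would classify each $[p_k]$ according to whether it lies in the open set $\pi(X_1 \setminus A_1)$, the open set $\pi(X_2 \setminus A_2)$, or in $A$. For any consecutive pair with $[p_k]\tleq[p_{k+1}]$ whose endpoints both admit an $X_1$-representative (resp.\ both an $X_2$-representative), Lemma \ref{relation reformulation} together with the fact that $f$ is $\leq$-preserving forces $p_k^1 \leq p_{k+1}^1$ (resp.\ $p_k^2 \le p_{k+1}^2$): any ``crossing'' witness $[a]$ appearing in the reformulation can be transported back into the common space through $f$ and absorbed by transitivity. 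Proposition \ref{prop: gluing properties}(ii) then gives $\td([p_k],[p_{k+1}]) \le d(p_k^1,p_{k+1}^1)$ (resp.\ $\le d(p_k^2,p_{k+1}^2)$). The only remaining steps join a point of $\pi(X_1 \setminus A_1)$ to one of $\pi(X_2 \setminus A_2)$; as these two sets are disjoint and open with union $X \setminus A$, continuity of $\gamma$ forces the curve to meet $A$ in between, and I would refine the partition by inserting such a crossing point, which lies on $\gamma$ and hence in $B \cap A$. After finitely many insertions, every step of the refined partition has a common representative and contributes either an $X_1$-bound or an $X_2$-bound.

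It then remains to sum these contributions. I would label each step as an $X_1$-step or an $X_2$-step and collect the $X_1$-representatives of the endpoints of all $X_1$-steps into a single list (and symmetrically for $X_2$). The key point is that this list is a \emph{single} causal sequence in $K_1$: within a maximal run of $X_1$-steps consecutive points are $\le$-related in $X_1$ by construction, and two such runs are separated by a run of $X_2$-steps whose shared boundary points lie in $A$; chaining the intervening $X_2$-relations and transporting them through $f$ shows that the last $X_1$-point $q^1$ before a gap satisfies $q^1 \le r^1$ for the first $X_1$-point $r^1$ after it. Applying extrinsic non-total imprisonment to this causal sequence in $K_1$ bounds $\sum_{X_1\text{-steps}} d(p_k^1,p_{k+1}^1) \le C_1$ (the omitted bridging terms $d(q^1,r^1)$ are nonnegative), and symmetrically for $K_2$. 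Hence every (refined) partition sum is $\le C_1 + C_2$.

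The main obstacle is precisely this last assembling step: a causal curve may oscillate arbitrarily often between the two spaces, so bounding each excursion separately would produce a sum of unboundedly many copies of $C_i$. The resolution is that consecutive $X_1$-excursions can be fused into one causal sequence exactly because $f$ preserves $\le$, which converts the causality accumulated during an intervening $X_2$-excursion into a single $X_1$-relation bridging the gap. This is what confines the entire estimate to one application of extrinsic non-total imprisonment per space, and it is also the reason the weaker (intrinsic) non-total imprisonment hypothesis would not suffice.
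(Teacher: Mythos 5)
Your proposal is correct and follows essentially the same route as the paper: decompose $\pi^{-1}(B)=K_1\sqcup K_2$ via Lemma \ref{lem: cpt sets}, use the $\leq$-preservation of $f$ to fuse all $X_1$-excursions (bridged across the intervening $X_2$-runs) into a single causal sequence in $K_1$ and likewise for $K_2$, and apply extrinsic non-total imprisonment once per space to get $L_{\td}(\gamma)\le C_1+C_2$. Your choice to work with finite partition sums defining $L_{\td}$ rather than with the curve's jump points is a minor presentational variant that neatly sidesteps the paper's need to remark that the bound persists for countably infinite causal sequences.
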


\begin{proof}
Let $K$ be a compact subset of $X$. Then $\pi^{-1}(K)=K_1 \sqcup K_2$ for compact $K_1 \subseteq X_1$ and $K_2 \subseteq X_2$.
Let $\gamma: [a,b] \to K$ be a causal curve. 
Then either $\pi^{-1}(\gamma([a,b]))$ is a causal curve in $K_i$ or it consists of pieces of causal curves in $K_1$ and $K_2$. 
In the first case we can take the original constant $C_i$ of $K_i$ by the extrinsic non-total imprisonment of $X_i$, so assume we are in the second case. 
Without loss of generality, assume $\gamma$ starts out in $X_1$, leaves $X_1$ through $[p]=\gamma(t_1) \in A$ and enters $X_1$ again at $[q]=\gamma(t_2) \in A$. Then clearly $p^1 \leq q^1$. By the extrinsic non-total imprisonment of $X_1$, we have $L_d(\gamma|_{[a,t_1]}) + d(p^1,q^1) \leq C_1$ (remember that $L_d$ is given as the supremum of lengths of causal chains, each of which satisfies the given bound by assumption). 
Doing this iteratively for all jump points, we end up with two causal sequences (each of which may have infinitely\footnote{Notice that at there are at most countably infinitely many jump points. Indeed, the domain $[a,b]$ can only be covered by countably many non-trivial subintervals, as, e.g., a measure theoretic argument shows.} many members, as $\gamma$ may jump infinitely often) in $K_1$ and $K_2$, respectively. In the end, this implies $L_{\td}(\gamma) \leq C_1 + C_2$.
\end{proof}

Finally, we collect all compatibility results regarding the causal ladder and amalgamation into the following theorem.
\begin{thm}[Causal inheritance]
Let $X_1$ and $X_2$ be two Lorentzian pre-length spaces and $X:= \amA$ their Lorentzian amalgamation. Then we have the following preservation of causality conditions.
\begin{itemize}
\item[(i)] If $X_1$ and $X_2$ are chronological, then so is $X$.

\item[(ii)] If $X_1$ and $X_2$ are causal, then so is $X$.
\item[(iii)] If $X_1$ and $X_2$ are extrinsically non-totally imprisoning, then $X$ is non-totally imprisoning.
\item[(iv)] If $X_1$ and $X_2$ are strongly causal, then so is $X$.
\item[(v)] If $X_1$ and $X_2$ are strongly causal, non-timelike locally isolating and distinguishing, then $X$ is distinguishing.
\item[(vi)] If $X_1$ and $X_2$ are causally path-connected, locally causally closed, d-compatible and globally hyperbolic with $A_1$ and $A_2$ time observing, then $X$ is globally hyperbolic.
\end{itemize}
\end{thm}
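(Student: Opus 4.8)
The plan is to recognize that this final theorem is a \emph{collecting} statement: each of its six items has already been proved as a standalone proposition earlier in the paper, and the hypotheses imposed in each item are exactly those required by the corresponding proposition. So rather than arguing anything new, I would simply match each clause to its source. Concretely, I would pair items (i) and (ii) with the two parts of Proposition~\ref{prop: chr and caus}, item (iii) with Proposition~\ref{prop: nti}, item (iv) with Proposition~\ref{strong causality}, item (v) with Proposition~\ref{causal distinction}, and item (vi) with Proposition~\ref{prop: glob hyp}. The only verification needed is that the hypothesis lists agree verbatim --- e.g.\ that (v) asks for strongly causal, non-timelike locally isolating and distinguishing, which is precisely what Proposition~\ref{causal distinction} requires, and that (vi) asks for causally path-connected, locally causally closed, $d$-compatible and globally hyperbolic with $A_1,A_2$ time observing, matching Proposition~\ref{prop: glob hyp}. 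They do, so the proof body reduces to one line citing the six results.

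Because there is no fresh argument, I do not expect any genuine obstacle in assembling the theorem itself; the intellectual weight sits entirely in the individual propositions. If I had to flag the piece on which the assembly most delicately rests, it would be item (vi), since Proposition~\ref{prop: glob hyp} is the only one whose proof is not self-contained: it first invokes Lemma~\ref{causal ladder0} to upgrade each $X_i$ to strong causality, then Proposition~\ref{strong causality} to transfer strong causality to $X$, then the $d$-compatibility of $X$ (inherited via Proposition~\ref{prop: loc}) together with Lemma~\ref{causal ladder1} to obtain non-total imprisonment, and finally the time-observing hypothesis on $A_1,A_2$ to keep the crossing points $[a]\in A$ controlled inside a compact set so that the diamonds $J_X([x],[y])$ are compact. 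This is where a careless reader might worry that a needed intermediate property has not been checked, so in writing the one-line proof I would make a point of noting that all these prerequisites are themselves supplied by results already in hand.

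Thus the proof I would record is essentially: \emph{items (i)--(vi) are, respectively, Proposition~\ref{prop: chr and caus}(i), Proposition~\ref{prop: chr and caus}(ii), Proposition~\ref{prop: nti}, Proposition~\ref{strong causality}, Proposition~\ref{causal distinction} and Proposition~\ref{prop: glob hyp}, whose hypotheses coincide in each case with those stated here.} No reparameterization, limit-curve, or chain-shortening work is needed at this stage, since all of that was discharged inside the cited propositions and the supporting lemmas of Section~3.
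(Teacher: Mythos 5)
Your proposal matches the paper's proof exactly: the theorem is indeed a collection statement, and the paper's own proof is the one-line citation of Propositions \ref{prop: chr and caus}, \ref{prop: nti}, \ref{strong causality}, \ref{causal distinction} and \ref{prop: glob hyp}, with the hypothesis lists agreeing as you verified. Your additional remark tracing the dependency chain inside Proposition \ref{prop: glob hyp} is accurate but not needed for the assembly itself.
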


\begin{proof}
This is a summary of Propositions \ref{prop: chr and caus}, \ref{prop: nti}, \ref{strong causality}, \ref{causal distinction} and \ref{prop: glob hyp}.
\end{proof}
\section{Outlook on missing properties}
The remaining properties of the causal ladder, for now, are not inherited by gluing under ``reasonable'' additional assumptions. These are $K$-causality, reflectivity and causal simplicity.
Nevertheless, we will give our (unsuccessful) thoughts and discuss these properties in some detail. 

\subsection*{$K$-causality}
Let us begin with $K$-causality. In \cite{BGH21}, it is shown that for certain Lorentzian pre-length spaces this is equivalent to the existence of a time function, which seems a bit more convenient to work with.
The most obvious approach for constructing a time function on a glued space $X$ under the assumption that $X_1$ and $X_2$ do possess time functions is the following: start with a time function $T_1:A_1 \to \R$ and consider the corresponding time function $T_2:= T_1 \circ f^{-1} : A_2 \to \R$. Suppose we could extend $T_1$ and $T_2$ to time functions defined on the entirety of the spaces $X_1$ and $X_2$ (and denote the extensions again by $T_1$ and $T_2$, respectively). 
Then consider a function $T: X \to \R$ defined by $T([x]):=T_i(x^i)$. This function is well defined since for $[x] \in A$ we have $T_1(x^1)=T_2(x^2)$. If $[x] \tl [y]$ in $X$, then either $x^i < y^i$ or there exists $[a] \in A$ such that $x^i < a^i \sim a^j < y^j$. In the first case we have $T([x])=T_i(x^i) < T_i(y^i)=T([y])$, since $T_i$ is a time function on $X_i$.
In the second case, we know $T([x])=T_i(x^i) < T_i(a^i)=T_j(a^j) < T_j(y^j)=T([y])$. Thus, $T$ is a time function (it is continuous since $T_1$ and $T_2$ are). \\

Of course, the difficulty lies in extending a time function from a subset to the whole space in such a way that it stays a time function. 
Historically, techniques and results from utility theory have enabled an entirely different approach to showing the existence of a strictly increasing function with respect to the $K$-relation compared to original results \cite{BH69, Ger70, EH73} (see \cite[Page 2]{Min10} for a detailed discussion).
Most importantly, \cite{Min10} noted that the existence of time functions on $K$-causal spacetimes is an immediate consequence of Levin's theorem:
\begin{thm}[Levin's theorem, \cite{Lev83}]
Let $X$ be a second countable and locally compact topological Hausdorff space and $\leq$ a closed, reflexive and transitive relation on $X$. Then there exists a continuous function $T:X \to \R$ such that $x \leq y \Rightarrow T(x) \leq T(y)$ with equality if and only if $y \leq x$ as well.
\end{thm}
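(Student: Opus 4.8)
The plan is to construct $T$ as a uniformly convergent series $T = \sum_n 2^{-n} g_n$ of continuous increasing functions $g_n \colon X \to [0,1]$ (where \emph{increasing} means $x \le y \Rightarrow g_n(x) \le g_n(y)$), chosen so that for every \emph{strictly} ordered pair --- i.e.\ $x \le y$ but $y \not\le x$ --- at least one $g_n$ separates the two points, $g_n(x) < g_n(y)$. Granting such a family, monotonicity of $T$ is immediate since a sum of increasing functions is increasing; continuity follows from uniform convergence of the bounded series; and the equality clause is then automatic. Indeed, if $x \le y \le x$ then every increasing $g_n$ satisfies $g_n(x) = g_n(y)$, whence $T(x) = T(y)$, while if $x \le y$ but $y \not\le x$ some $g_n(x) < g_n(y)$ forces $T(x) < T(y)$.

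The two workhorses are an order-theoretic separation (Nachbin-type) lemma and a countability argument fed by the closedness of $\le$. First I would record the topological consequences of the hypotheses: a second countable, locally compact Hausdorff space is metrizable and admits a countable base $\{U_n\}$ of relatively compact open sets. For a compact set $K$, closedness of $\le$ makes the up-set ${\uparrow}K := \{z \mid \exists k \in K,\ k \le z\}$ closed (take a convergent subnet inside $K$ and use that $\le \, \subseteq X \times X$ is closed), and dually for down-sets ${\downarrow}K$. The separation lemma I need is the following: if $G$ is a closed increasing set and $F$ a closed decreasing set with $F \cap G = \emptyset$, then there is a continuous increasing $g \colon X \to [0,1]$ with $g|_G = 1$ and $g|_F = 0$. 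This is Nachbin's theorem, applicable because the hypotheses make $(X, \le)$ a normally preordered space.

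To produce the countable family, fix a strict pair $x \le y$, $y \not\le x$. Since $(y,x) \notin \,\le\,$ and $\le$ is closed, there is an open box $U \times V \ni (y,x)$ disjoint from $\le$; shrinking via regularity and local compactness, I obtain basic relatively compact neighbourhoods $U_m \ni y$ and $U_k \ni x$ with $\overline{U_m} \times \overline{U_k}$ still disjoint from $\le$, i.e.\ $u \not\le v$ for all $u \in \overline{U_m}$ and $v \in \overline{U_k}$. Then the closed increasing set ${\uparrow}\overline{U_m}$ and the closed decreasing set ${\downarrow}\overline{U_k}$ are disjoint, since a common point $z$ would give $u \le z \le v$ for some such $u,v$; the separation lemma thus yields a continuous increasing $g_{m,k}$ with $g_{m,k}(y) = 1$ and $g_{m,k}(x) = 0$. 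As there are only countably many index pairs $(m,k)$, the collection $\{g_{m,k}\}$ is the desired separating family, and the weighted sum above defines $T$.

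The main obstacle I anticipate is the separation lemma itself in the locally compact (rather than compact) setting: verifying that $(X, \le)$ is normally preordered --- equivalently, proving the order-Urysohn lemma here --- is the genuine technical content, since the Urysohn construction must be carried out \emph{monotonically} with respect to $\le$. A secondary delicate point is the shrinking step that upgrades the open separation coming from closedness of $\le$ to disjointness of the closed sets ${\uparrow}\overline{U_m}$ and ${\downarrow}\overline{U_k}$; this is precisely where local compactness is needed, to secure relatively compact closures for which ${\uparrow}(\cdot)$ and ${\downarrow}(\cdot)$ preserve closedness.
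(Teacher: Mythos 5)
First, on the comparison: the paper does not prove this statement at all --- it is imported verbatim from Levin's 1983 paper as an external tool --- so there is no in-paper argument to measure yours against. Your proposal follows what is by now the standard route to such representation theorems: build a countable family of continuous isotone functions $g_n:X\to[0,1]$ that separates every strict pair, and sum with weights $2^{-n}$. The reduction itself is carried out correctly: up-sets and down-sets of compact sets are closed because $\leq$ is closed, the shrinking of the open box around $(y,x)$ to relatively compact basic neighbourhoods is legitimate by local compactness and regularity, the disjointness of ${\uparrow}\overline{U_m}$ and ${\downarrow}\overline{U_k}$ follows from transitivity, and countability of the index set together with the Weierstrass test gives continuity, monotonicity, and the equality clause of the conclusion.

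The genuine gap is the one you yourself flag: the Nachbin-type separation lemma --- equivalently, that $(X,\leq)$ is a normally preordered space, so that disjoint closed increasing and decreasing sets can be separated by a continuous isotone $[0,1]$-valued function --- is asserted, not proved, and in the locally compact non-compact setting this is precisely where the analytic content of the theorem lives. Nachbin's classical result covers compact Hausdorff spaces with closed orders; the extension you need, namely that a locally compact and $\sigma$-compact (here guaranteed by second countability via Lindel\"of) Hausdorff space with a closed preorder is normally preordered, is a nontrivial theorem in its own right (it is essentially Minguzzi's result on $k_\omega$-spaces, cf.\ \cite{Min13a}). Until you either prove this order-Urysohn lemma or at least carry out the monotone Urysohn construction for the specific sets ${\uparrow}K_1$ and ${\downarrow}K_2$ with $K_1,K_2$ compact --- for instance by exhausting $X$ by compacta, running Nachbin's compact case on each piece, and checking that the resulting isotone functions can be patched --- the proposal is an architecture rather than a proof: every difficulty has been pushed into a single unproven lemma.
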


Let for now $X$ be a topological space equipped with a preorder, i.e., a reflexive and transitive relation, denoted by $\leq$.
For the task of extending a merely increasing function\footnote{Here, $f:X \to \R$ is increasing if $x \leq y \Rightarrow f(x) \leq f(y)$ holds for all $x,y \in X$. 
In the context of order topology and/or utility theory, such functions are called isotone.} from a subset to the whole space $X$, \cite{Min13a} gives the following necessary and sufficient criterion, also found in \cite[Theorem 2]{Nac65}. To this end, recall that a subset $A$ is called decreasing if $x \in A, y \leq x \Rightarrow y \in A$, and an increasing set is defined analogously. Then $D(A)$ denotes the smallest closed decreasing subset that contains $A$. Similarly, $I(A)$ denotes the smallest closed increasing subset containing $A$. Finally, a normally preordered space is a topological space equipped with a preorder such that whenever $A$ is closed and decreasing and $B$ is closed and increasing and $A \cap B = \emptyset$, then there exist open sets $A' \supseteq A, B' \supset B$ such that $A'$ is decreasing and $B'$ is increasing and $A' \cap B' = \emptyset$. 

\begin{thm}[Extension criterion, {\cite[Theorem 3.1]{Min13a}}]
Let $X$ be a normally preordered space and let $A$ be a subspace. Let $T:A \to [0,1]$ be a continuous isotone function. Then $T$ possesses an extension to a continuous isotone function on $X$ if and only if the following holds for all $s,t \in [0,1]$:
\begin{equation}
\label{eq: extension}
s < t \Rightarrow D(T^{-1}([0,s]) \cap I(T^{-1}([t,1])) = \emptyset.
\end{equation}
\end{thm}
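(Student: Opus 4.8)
The plan is to treat the two implications separately, the forward direction being a short verification and the converse an ordered Tietze--Urysohn construction.

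For necessity, suppose $T$ extends to a continuous isotone $\bar T : X \to [0,1]$. First I would observe that for every $s \in [0,1]$ the set $\bar T^{-1}([0,s])$ is closed (by continuity) and decreasing (by isotonicity: $y \le x$ together with $\bar T(x) \le s$ forces $\bar T(y) \le s$), and that it contains $T^{-1}([0,s]) = A \cap \bar T^{-1}([0,s])$. Minimality of the closed decreasing hull then gives $D(T^{-1}([0,s])) \subseteq \bar T^{-1}([0,s])$, and dually $I(T^{-1}([t,1])) \subseteq \bar T^{-1}([t,1])$. Since $\bar T^{-1}([0,s]) \cap \bar T^{-1}([t,1]) = \bar T^{-1}([0,s] \cap [t,1]) = \emptyset$ whenever $s < t$, condition \eqref{eq: extension} follows at once.

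The real work is sufficiency, where I would reconstruct the extension from the level sets of $T$. Writing $Q := \Q \cap [0,1]$, for each $q \in Q$ set $F_q := D(T^{-1}([0,q]))$ (closed decreasing) and $E_q := I(T^{-1}([q,1]))$ (closed increasing); the hypothesis \eqref{eq: extension} says precisely that $F_q \cap E_{q'} = \emptyset$ whenever $q < q'$. The goal is to produce a family $\{U_q\}_{q \in Q}$ of \emph{open decreasing} sets satisfying $F_q \subseteq U_q$, the avoidance $U_q \cap E_{q'} = \emptyset$ for $q < q'$, and the nesting $\overline{U_q} \subseteq U_{q'}$ for $q < q'$. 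Given such a family I would set $\bar T(x) := \inf\{ q \in Q : x \in U_q\}$, with the convention $\inf \emptyset = 1$. Three verifications then close the argument: the sets $U_q$ being decreasing makes $\bar T$ isotone, since $x \le y$ forces $\{q : y \in U_q\} \subseteq \{q : x \in U_q\}$; the nesting yields continuity by the usual Urysohn computation, as $\{\bar T < a\} = \bigcup_{q < a} U_q$ and $\{\bar T > a\} = \bigcup_{q > a}(X \setminus \overline{U_q})$ are both open; and for $x \in A$ the two compatibility conditions give $x \in U_q \iff T(x) \le q$, because $T(x) \le q$ puts $x$ into $T^{-1}([0,q]) \subseteq F_q \subseteq U_q$, while $x \in U_q$ together with $T^{-1}([q',1]) \subseteq E_{q'}$ forces $T(x) < q'$ for every $q' > q$, hence $T(x) \le q$. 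Thus $\bar T(x) = \inf\{q : T(x) \le q\} = T(x)$, so $\bar T$ extends $T$.

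It remains to build the family $\{U_q\}$, and this is where I expect the main obstacle. The construction proceeds by induction along a fixed enumeration of $Q$, inserting $U_q$ between the closed decreasing datum assembled from the already-placed lower neighbour $U_{q^-}$ together with $F_q$, and the closed increasing obstruction assembled from the complement of the upper neighbour $U_{q^+}$ together with the sets $E_{q'}$, and separating the two by the normality of the preordered space to obtain an open decreasing $U_q$ with the required closure nesting. The delicate points are twofold. First, the closure of an open decreasing set can fail to be decreasing, so the lower datum $\overline{U_{q^-}} \cup F_q$ must be replaced by its closed decreasing hull before normality is applied, and one must check that this hull still avoids the relevant closed increasing set. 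Second — and this is the crux — one has to maintain the two $T$-compatibility conditions $F_q \subseteq U_q$ and $U_q \cap E_{q'} = \emptyset$ simultaneously through the whole inductive bookkeeping, and it is exactly the disjointness $F_q \cap E_{q'} = \emptyset$ supplied by \eqref{eq: extension} that guarantees, at every insertion step, that the closed decreasing lower set and the closed increasing upper obstruction are disjoint, so that the separation axiom of a normally preordered space can legitimately be invoked. Once this simultaneous interpolation is carried through, the extension is complete.
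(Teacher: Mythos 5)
First, a point of reference: the paper does not prove this statement at all --- it is quoted verbatim from \cite[Theorem 3.1]{Min13a} (going back to Nachbin), so there is no in-paper proof to compare against. I can only assess your argument on its own terms. Your necessity direction is correct and complete, and the overall architecture of the sufficiency direction (a rational-indexed family of open decreasing sets, with the extension defined as an infimum) is the right one.

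However, the sufficiency direction has a genuine flaw, and it sits exactly at the point you flag as ``the crux''. The family $\{U_q\}$ you set out to build --- open decreasing, with $F_q \subseteq U_q$ \emph{and} $U_q \cap E_{q'} = \emptyset$ for every rational $q' > q$ --- need not exist even when the extension does. The reason: if $U$ is open and decreasing, then $X \setminus U$ is closed and increasing, so disjointness of $U$ from a set $S$ automatically upgrades to disjointness from $I(S)$. Your two conditions therefore jointly force $F_q \cap I\bigl(\bigcup_{q'>q} E_{q'}\bigr) = D(T^{-1}([0,q])) \cap I(T^{-1}((q,1])) = \emptyset$, which is strictly stronger than (\ref{eq: extension}): the hypothesis controls the closed increasing hull of each individual slice $T^{-1}([t,1])$, not of their union. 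Concretely, take $X=[0,1]$ with the trivial preorder $x \leq y \iff x=y$ (so $D$ and $I$ are topological closure and ``normally preordered'' is ordinary normality), $A=\{0\}\cup\{1/n \mid n\in\N\}$, $T(0)=1/2$ and $T(1/n)=1/2+1/(2n)$. Then (\ref{eq: extension}) holds and $T$ extends by Tietze, yet every open $U_{1/2}\ni 0$ contains some $1/n$, which lies in $E_{q'}$ for $q'=1/2+1/(2n)>1/2$; so no admissible family exists. The repair is to demand only the strict versions $F_s\subseteq U_q$ for $s<q$ and $U_q\cap E_t=\emptyset$ for $q<t$ (your verification that $\bar T$ restricts to $T$ survives this weakening, by approximating $T(x)$ from above and below), but then each insertion step must separate $\bigcup_{s<q}F_s$ from $\bigcup_{t>q}E_t$, and these are increasing unions of closed sets that are in general not closed, so the normality axiom cannot be invoked directly on them. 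Bridging exactly that difficulty is the actual content of the cited proof, and your sketch does not supply it.
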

Two problems become apparent rather quickly: on the one hand, denoting the $K$-relation by $\leq^K$, we do not know whether composing a $\leq^K$-increasing function on $A_1$ with $f^{-1}$ yields a $\leq^K$-increasing function on $A_2$. For this one really needs to take into consideration the topology and causality of the whole spaces and not just of the homeomorphic subsets. This is why it is expected that requiring $f$ to be $\leq^K$-preserving is strictly necessary. 
On the other hand, time functions are not just isotone, rather they are strictly increasing with respect to $\leq$, i.e., if $x < y$\footnote{Notice that in this setting, $x < y$ is defined as $x \leq y$ and $y \not\leq x$. If the relation is antisymmetric, this is equivalent to $x \leq y$ and $x \neq y$.} then $f(x) < f(y)$. \\

In this regard there are also some fundamental topological results: 
to extend a strictly increasing function in the same fashion as above, \cite[Theorem 2.1]{Hus18}\footnote{To not distract too much with new notation which is then not really used anyways, we decided to only give a reference in this case.} demands, among other things, a partial order $\leq$ to be order-separable, which includes that there exists a countable subset $C$ of $X$ such that for all $x,z \in X$ with $x < z$ there is $y \in C$ such that $x < y <  z$.
By considering the disjoint and uncountable collection of lightlike segments from $(0,a)$ to $(1,a+1)$ for $a \in [0,1]$ in the Minkowski plane, one sees that the causal relation $\leq$ (and hence also the $K$-relation) is not order-separable even in very nice spaces. Thus, this is an unreasonable requirement. 
Next, one could try to apply this theory to the timelike relation, which is order-separable if $X$ is separable: indeed, $I(x,y)$ is open for all $x,y \in X$, and separability yields the existence of a countable subset $\{p_i\}_{i \in \N}$ such that each nonempty open set in $X$ contains some $p_i$. This would at least lead to the existence of a semi-time function\footnote{A semi-time function is a continuous function $T: X \to \R$ satisfying $x \ll y \Rightarrow T(x) < T(y)$.} on a glued space. However, $\ll$ is neither closed nor a preorder.
In summary, it seems that there is currently no machinery available to guarantee the extension of a (semi-)time function. \\

As a small consolation, one can establish the existence of a continuous isotone function on an amalgamation. This is done by imposing the topological conditions required in Levin's theorem on the individual spaces and observe that these are inherited. 
Notice that we now switch back to our usual notation of Lorentzian pre-length spaces $X_1$ and $X_2$ and their amalgamation $X$.
\begin{prop}[Existence of isotone functions]
Let $X_1$ and $X_2$ be two locally compact and second countable Lorentzian pre-length spaces.
Then $X$ admits a continuous $\leq$-isotone function.
\end{prop}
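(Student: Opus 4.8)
The plan is to verify the hypotheses of Levin's theorem for the pair $(X,K)$, where $K$ is the $K$-relation on $X$, and then simply read off the conclusion. Recall that $K$ is by construction closed and transitive, and since $\tleq$ is reflexive (the chain $x^i \leq x^i$ witnesses $[x]\tleq[x]$) and $K \supseteq {\tleq}$, the relation $K$ is reflexive as well; hence $K$ is an admissible relation for Levin's theorem. Applying that theorem would yield a continuous $T\colon X \to \R$ with $[x] \mathrel{K} [y] \Rightarrow T([x]) \leq T([y])$, and since ${\tleq} \subseteq K$ this $T$ is in particular $\tleq$-isotone, which is exactly the assertion. Thus everything reduces to checking that $X$ is a second countable, locally compact Hausdorff space. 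The key idea is precisely to feed Levin's theorem the closed relation $K$ rather than $\tleq$ directly, since $\tleq$ need not be closed.

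The Hausdorff property is immediate: under our standing assumptions $X$ is a Lorentzian pre-length space, so its topology is induced by the genuine metric $\td$. For second countability I would argue via separability. The disjoint union $X_1 \sqcup X_2$ is separable, being a disjoint union of two separable metric spaces, and $\pi$ is continuous and surjective, so the image under $\pi$ of a countable dense set is again countable and dense in $X$. As a separable metric space is second countable, $X$ is second countable.

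For local compactness I would distinguish the two familiar cases. If $[p] \in X \setminus A$, I would choose (using local compactness of $X_i$ and Remark \ref{rem: convergence in amalg}) a relatively compact open set $U \ni p^i$ in $X_i$ avoiding $A_i$; then $\pi^{-1}(\pi(U)) = U$, so $\pi(U)$ is open and $\pi(\widebar{U})$ is a compact neighbourhood of $[p]$. If $[p] \in A$, I would use the matching-neighbourhood construction already employed in Propositions \ref{strong causality} and \ref{lcc}: pick relatively compact open sets $U_1 \ni p^1$ and $U_2 \ni p^2$ with $f(U_1 \cap A_1) = U_2 \cap A_2$. By Lemma \ref{recovering subsets}, $\pi(U_1 \sqcup U_2)$ is then open, hence an open neighbourhood of $[p]$, and it is contained in $\pi(\widebar{U_1} \sqcup \widebar{U_2})$, which is compact as a continuous image of a compact set (cf. Lemma \ref{lem: cpt sets}). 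So $[p]$ has a compact neighbourhood.

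The main obstacle is topological rather than causal: $\pi$ is not an open map, so one cannot invoke any (false-in-general) preservation of second countability or local compactness under arbitrary quotients. The substance lies in the observation that the matching of neighbourhoods across $f$ along $A$ repairs exactly this defect, together with the fact that $\td$ is a genuine metric inducing the topology (guaranteed because $X$ is a Lorentzian pre-length space). Once these two points are secured, the verification of Levin's hypotheses is routine and the existence of the $\leq$-isotone function follows at once.
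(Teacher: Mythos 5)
Your proposal is correct and follows essentially the same route as the paper: verify that $X$ is second countable and locally compact (the latter via the same case distinction on whether the point lies in $A$, using matched relatively compact neighbourhoods and Lemma \ref{recovering subsets}), then apply Levin's theorem to the closed, reflexive and transitive $K$-relation. The only divergence is that you obtain second countability by pushing a countable dense subset of $X_1 \sqcup X_2$ forward through the continuous surjection $\pi$ and invoking separability of metric spaces, whereas the paper pulls open covers back and argues via the Lindel\"of property; both are equally valid and rest on the same fact that the topology of $X$ is induced by the metric $\td$.
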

\begin{proof}
We only need to show that $X$ is second countable and locally compact, to then apply Levin's theorem to the $K$-relation on $X$. To this end, recall that being second countable and Lindelöf is equivalent for metric spaces. So let $\{U^i\}_{i \in I}$ be an open cover of $X$. Then $\pi^{-1}(U^i)=U_1^i \sqcup U_2^i$ is open in $X_1 \sqcup X_2$, i.e., $U_1^i$ and $U_2^i$ are open in $X_1$ and $X_2$, respectively, for all $i \in I$. 
Moreover, $\{U_1^i\}_{i \in I}$ and $\{U_2^i\}_{i \in I}$ are open covers of $X_1$ and $X_2$, respectively. Thus, there exist countable subcovers $\{U_1^{i_j}\}_{j \in \N}$ and $\{U_1^{i_k}\}_{k \in \N}$ of $X_1$ and $X_2$, respectively. 
In particular, $\{U_1^{i_j} \sqcup U_2^{i_j}\}_{j \in \N} \cup \{U_1^{i_k} \sqcup U_2^{i_k} \}_{k \in \N}$ is a countable subcover of $X_1 \sqcup X_2$ such that the projection of each set is open in $X$. This yields a countable subcover of $X$, establishing that $X$ is Lindelöf and hence second countable. \\

Concerning local compactness, let first, say, $[x] \in X_1 \setminus A_1$. 
Let $U_1$ be a compact neighbourhood of $x^1$ which does not meet $A_1$ and let $V_1 \subseteq U_1$ be an open set containing $x^1$. Then $\pi(U_1)$ is a compact neighbourhood of $[x]$. It is compact since $U_1$ is compact and $\pi$ is continuous, and $[x] \in \pi(V_1)$ is open since $\pi^{-1}(\pi(V_1))=V_1$ is open. 
The case of $[x] \in A$ is similar: we then find compact neighbourhoods $U_i$ of $x^i$ in $X_i, i=1,2$ such that $f(U_1 \cap A_1)=U_2 \cap A_2$. 
Let $V_i \subseteq U_i$ be open sets containing $x^i$ such that $f(V_1 \cap A_1)=V_2 \cap A_2$. Then $\pi(U_1 \sqcup U_2)$ is a compact neighbourhood of $[x]$. It is compact since $\pi$ is continuous and $[x] \in \pi(V_1 \sqcup V_2)$ is open since $\pi^{-1}(\pi(V_1 \sqcup V_2))=V_1 \sqcup V_2$ is open, cf.\ Lemma \ref{recovering subsets}.
Thus, $X$ satisfies the assumptions for Levin's theorem for the $K$-relation.
\end{proof}

\subsection*{Reflectivity}
The next step on the ladder is causal continuity, or more precisely, the reflectivity part of this property as we already showed that being distinguishing is inherited. 
Our by now standard approach of distinguishing cases of where the points originally come from starts out very promising, as all cases except both points being isolated in different spaces follow very naturally. 
Indeed, in all these other cases the inclusions $I_X^+([x]) \subseteq I_X^+([y])$ lead back to, say, $I_1^+(x^1) \subseteq I_1^+(y^1)$, and from this the other, more complicated side can be resolved as well. 
For instance, consider the case $[x],[y] \in X_1 \setminus A_1$, then via Proposition \ref{causal rep} and Lemma \ref{recovering subsets}, $I_X^+([x]) \subseteq I_X^+([y])$ yields $I_1^+(x^1) \subseteq I_1^+(y^1)$ and $\cup_{x^1 \ll a^1} I_2^+(a^2) \subseteq \cup_{y^1 \ll b^1} I_2^+(b^2)$. By the reflectivity of $X_1$ we obtain $I_1^-(y^1) \subseteq I_1^-(x^1)$. 
In particular, for any $b^1 \ll y^1$ it then follows that $b^1 \ll x^1$, so $\cup_{b^1 \ll y^1} I_2^-(a^2) \subseteq \cup_{a^1 \ll x^1} I_2^-(b^2)$. This implies $I_X^-([y]) \subseteq I_X^-([x])$. 
Thus, the only case left to consider is $[x]=\{x^1\}, [y]=\{y^2\}$, for which $I_X^+([x]) \subseteq I_X^+([y])$ yields $I_1^+(x^1) \subseteq U_{y^2 \ll b^2} I_1^+(b^1)$ and $\cup_{x^1 \ll a^1}I_2^+(a^2) \subseteq I_2^+(y^2)$. 
The seemingly easier half, in this case in $X_1$, can be achieved with the additional assumption that $\tau$ is continuous and $X_1$ and $X_2$ are non-timelike locally isolating. 
That is, under these assumptions one can infer $\cup_{b^2 \ll y^2} I_1^-(b^1) \subseteq I_1^-(x^1)$. 
Indeed, given $p^1 \in \cup_{b^2 \ll y^2} I_1^-(b^1)$ we find $[b] \in A$ such that $p^1 \ll b^1 \sim b^2 \ll y^2$. Consider a sequence $x_n^1 \to x^1, x_n^1 \in I_1^+(x^1)$\footnote{This exists by non-timelike local isolation: choose $x_n^1 \in I^+(x^1) \cap B_{\frac{1}{n}}(x^1)$.}. Since $I_1^+(x^1) \subseteq U_{y^2 \ll b^2} I_1^+(b^1)$ by assumption, for all $x_n^1$ we find $[a_n] \in A$ such that $x_n^1 \gg a_n^1 \sim a_n^2 \gg y^2$. 
In particular, $\tau(p^1,x_n^1) \geq \tau(p^1,b^1) + \tau(b^1,a_n^1) + \tau(a_n^1,x_n^1) > \tau(p^1,b^1) =: K > 0$.
As $\tau$ is continuous, we get $\tau(p^1,x^1) \geq K$, i.e., $p^1 \in I_1^-(x^1)$. 
Continuity of the time separation function implies that a space is reflecting, see \cite[Proposition 3.17]{ACS20}, so this is already a comparatively strong assumption. 
However, if one would follow the same approach, the other desired inclusion, $I_2^-(y^2) \subseteq \cup_{a^1 \ll x^1} I_2^-(a^2)$, suggests that one would have to be able to enclose a sequence of jump points into a compact subset to infer the existence of a limit which serves as a jump point for $x^1$. More precisely, given a sequence $x_n^1 \to x^1, x_n^1 \in I_1^+(x^1)$ and some $p^2 \in I_2^-(y^2)$, we find $[b_n]$ such that $p^2 \ll b_n^2 \sim b_n^1 \ll x_n^1$ for all $n$, but there is no lower bound on $\tau(b_n^1,x_n^1)$ and we do not know whether $([b_n])_{n \in \N}$ converges. 
To guarantee this, one would essentially be already on the level of global hyperbolicity, at which point the inheritance of being reflective would be redundant.
Finally, note that Example \ref{ex: flagpole} also serves as a counterexample for the inheritance of reflectivity, only in this case the omitted assumptions do not appear to be useful.

\subsection*{Causal simplicity}
At last, let us briefly discuss causal simplicity. This is similar to reflectivity in the sense that some compactness argument is required, which then requires the original spaces to be not far removed from global hyperbolicity. 
Indeed, showing the closedness of, say, $J_X^+([x])=\pi(J_1^+(x^1) \sqcup \cup_{x^1 \leq a^1}J_2^+(a^2))$, requires a subsequence of jump points $a_n^i$ to converge. More precisely, let $([p_n])_{n \in \N}$ be a sequence in $J_X^+([x])$ converging to $[p]$. If there is a subsequence $p_{n_k}^1 \to p^1$ in $X_1$, then the desired relation follows by the closedness of $J_1^+(x^1)$. 
So suppose that for all (large enough) $n$ there exists $[a_n]$ such that $x^1 \leq a_n^1 \sim a_n^2 \leq p_n^2$. Convergence of $([a_n])_{n \in \N}$ appears to be the only way of constructing $[a]$ such that $x^1 \leq a^1 \sim a^2 \leq p^2$. The most obvious assumptions that guarantee this are precisely the ones used in Proposition \ref{prop: glob hyp}.

\begin{chapt*}{Acknowledgments}
I want to thank Tobias Beran for valuable input throughout various stages of this project. \\

This work was supported by research grant P33594 of the
Austrian Science Fund FWF.
\end{chapt*}

\bibliographystyle{plain}
\bibliography{Biblio}

\begin{thebibliography}{10}

\bibitem{ACS20}
L.~{Aké Hau}, A.J. {Cabrera Pacheco}, and D.A. Solis.
\newblock On the causal hierarchy of {L}orentzian length spaces.
\newblock {\em Classical Quantum Gravity}, 37(21):215013, 2020.

\bibitem{AGKS19}
S.~B. Alexander, M.~Graf, M.~Kunzinger, and C.~S{ä}mann.
\newblock {Generalized cones as {L}orentzian length spaces: Causality,
  curvature, and singularity theorems}.
\newblock {\em Communicatios in {A}nalysis and {G}eometry}, 2019.
\newblock to appear, Preprint: \url{https://arxiv.org/pdf/1909.09575.pdf}.

\bibitem{BMdOS22}
W.~Barrera, L.~{Montes de Oca}, and D.A. Solis.
\newblock Comparison theorems for lorentzian length spaces with lower timelike
  curvature bounds.
\newblock \url{https://arxiv.org/pdf/2204.09612.pdf}, 2022.

\bibitem{Ber20}
T.~Beran.
\newblock Lorentzian length spaces.
\newblock Master's thesis, University of Vienna, 2020.
\newblock \url{https://phaidra.univie.ac.at/open/o:1363059}.

\bibitem{BR22}
T.~Beran and F.~Rott.
\newblock Gluing constructions for {L}orentzian length spaces.
\newblock \url{https://arxiv.org/pdf/2201.09695.pdf}, 2022.

\bibitem{BS22}
T.~Beran and C.~S{\"a}mann.
\newblock Hyperbolic angles in lorentzian length spaces and timelike curvature
  bounds.
\newblock \url{https://arxiv.org/pdf/2204.09491.pdf}, 2022.

\bibitem{BH99}
M.~R. Bridson and A.~Haeflinger.
\newblock {\em Metric spaces of non-positive curvature}, volume 319 of {\em
  Comprehensive Studies in Mathematics}.
\newblock Springer, Berlin, 1999.

\bibitem{BBI}
D.~Burago, Y.~Burago, and S.~Ivanov.
\newblock {\em A course in metric geometry}, volume~33 of {\em Graduate Studies
  in Mathematics}.
\newblock American Mathematical Society, Providence, {RI}, 2001.

\bibitem{BGH21}
A.~Burtscher and L.~García-Heveling.
\newblock Time functions on {L}orentzian length spaces.
\newblock \url{https://arxiv.org/pdf/2108.02693.pdf}, 2021.

\bibitem{CM20}
F.~Cavalletti and A.~Mondino.
\newblock {Optimal transport in {L}orentzian synthetic spaces, synthetic
  timelike {R}icci curvature lower bounds and applications}.
\newblock \url{https://arxiv.org/pdf/2004.08934.pdf}, 2020.

\bibitem{Ger70}
R.~Geroch.
\newblock Domain of dependence.
\newblock {\em Journal of Mathematical Physics}, 11(2):437--449, 1970.

\bibitem{GKS19}
J.~D.~E. Grant, M.~Kunzinger, and C.~S{ä}mann.
\newblock {Inextendibility of spacetimes and {L}orentzian length spaces}.
\newblock {\em Annals of Global Analysis and Geometry}, 55(1):133--147, 2019.

\bibitem{BH69}
S.W. Hawking and H.~Bondi.
\newblock The existence of cosmic time functions.
\newblock {\em Proceedings of the Royal Society of London. Series A.
  Mathematical and Physical Sciences}, 308(1494):433--435, 1969.

\bibitem{EH73}
S.W. Hawking and G.~F.~R. Ellis.
\newblock {\em The Large Scale Structure of Space-Time}.
\newblock Cambridge Monographs on Mathematical Physics. Cambridge University
  Press, 1973.

\bibitem{Hus18}
F.~Husseinov.
\newblock Extension of strictly monotonic functions in order-separable spaces.
\newblock \url{http://dx.doi.org/10.2139/ssrn.3260586}, 2018.

\bibitem{KS18}
M.~Kunzinger and C.~S{ä}mann.
\newblock {Lorentzian length spaces}.
\newblock {\em Annals of Global Analysis and Geometry}, 54(3):399--447, 2018.

\bibitem{KS21}
M.~Kunzinger and R.~Steinbauer.
\newblock Null distance and convergence of {L}orentzian length spaces.
\newblock \url{https://arxiv.org/pdf/2106.05393.pdf}, 2021.

\bibitem{Lev83}
V.L. Levin.
\newblock Continuous utility theorem for closed preorders on a metrizable $
  \sigma$-compact space.
\newblock {\em Dokl. Akad. Nauk SSSR}, 273:800--804, 1983.

\bibitem{MS21}
R.J. McCann and C.~S{\"a}mann.
\newblock A {L}orentzian analog for {H}ausdorff dimension and measure.
\newblock \url{https://arxiv.org/pdf/2110.04386.pdf}, 2021.

\bibitem{Min10}
E.~Minguzzi.
\newblock Time functions as utilities.
\newblock {\em Comm. Math. Phys.}, 298(3):855--868, 2010.

\bibitem{Min13a}
E.~Minguzzi.
\newblock Normally preordered spaces and utilities.
\newblock {\em Order 30}, pages 137--150, 2013.

\bibitem{Nac65}
L.~Nachbin.
\newblock {\em Topology and order}.
\newblock Number~4 in Vand Nostrand Mathematical Studies. Van Nostrand Co.,
  Inc., 1965.

\bibitem{ON83}
B.~O'Neill.
\newblock {\em Semi-Riemannian geometry with applications to relativity},
  volume 103 of {\em Pure and Applied Mathematics}.
\newblock Academic Press, Inc. [Harcourt Brace Jovanovich, Publishers], New
  York, 1983.

\bibitem{SV16}
C.~Sormani and C.~Vega.
\newblock Null distance on a spacetime.
\newblock {\em Classical {Q}uantum {G}ravity}, 33(8):085001, 2016.

\end{thebibliography}
\end{document}